\crefname{equation}{}{}
\DeclareSymbolFont{symbolsC}{U}{pxsyc}{m}{n}
\DeclareMathSymbol{\medcircle}{\mathbin}{symbolsC}{7}
\crefname{algocf}{Algorithm}{Algorithms}
\crefname{equation}{}{} 
\colorlet{refkey}{orange!20}
\colorlet{labelkey}{blue!30}
\crefname{algocf}{Algorithm}{Algorithms}
\numberwithin{equation}{section}
\newtheorem{theorem}{Theorem}[section]
\newtheorem{proposition}[theorem]{Proposition}
\newtheorem{lemma}[theorem]{Lemma}
\newtheorem{claim}[theorem]{Claim}
\crefname{claim}{Claim}{Claims}
\newtheorem{corollary}[theorem]{Corollary}
\newtheorem{conjecture}[theorem]{Conjecture}
\newtheorem*{question*}{Question}
\theoremstyle{definition}
\newtheorem{definition}[theorem]{Definition}
\newtheorem*{definition*}{Definition}
\theoremstyle{remark}
\newtheorem*{remark}{Remark}
\newcommand{\mb}{\mathbb}
\newcommand{\mc}{\mathcal}
\newcommand{\mf}{\mathfrak}
\newcommand{\mr}{\mathrm}
\newcommand{\on}{\operatorname}
\newcommand{\eps}{\varepsilon}
\let\originalleft\left
\let\originalright\right
\renewcommand{\left}{\mathopen{}\mathclose\bgroup\originalleft}
\renewcommand{\right}{\aftergroup\egroup\originalright}
\newif\ifpublic
\newcommand{\ignore}[1]{}
\title{Threshold for Steiner triple systems}
\author[A2]{Ashwin Sah}
\author[A3]{Mehtaab Sawhney}
\address{Department of Mathematics, Massachusetts Institute of Technology, Cambridge, MA 02139, USA}
\email{\{asah,msawhney\}@mit.edu}
\author[Simkin]{Michael Simkin}
\address{Center of Mathematical Sciences and Applications, Harvard University, Cambridge, MA 02138, USA}
\email{msimkin@cmsa.fas.harvard.edu}
\thanks{Sah and Sawhney were supported by NSF Graduate Research Fellowship Program DGE-1745302. Sah was supported by the PD Soros Fellowship. Simkin was supported by the Center of Mathematical Sciences and Applications at Harvard University.}
\begin{document}

\maketitle
\begin{abstract}
We prove that with high probability $\mb{G}^{(3)}(n,n^{-1+o(1)})$ contains a spanning Steiner triple system for $n\equiv 1,3\pmod{6}$, establishing the exponent for the threshold probability for existence of a Steiner triple system. We also prove the analogous theorem for Latin squares. Our result follows from a novel bootstrapping scheme that utilizes iterative absorption as well as the connection between thresholds and fractional expectation-thresholds established by Frankston, Kahn, Narayanan, and Park.
\end{abstract}

\section{Introduction}\label{sec:introduction}

A foundational result in the theory of random graphs, due to Erd\H{o}s and R\'enyi \cite{ER66}, is that the threshold for the appearance of perfect matchings in $\mb{G}(n,p)$ is $\log n / n$. It is natural to seek higher-dimensional analogues of this result. As the simplest case, consider perfect matchings in $3$-uniform hypergraphs. These are collections of $3$-edges, or \textit{triangles}, such that each vertex is contained in exactly one triangle. Let $\mb{G}^{(3)}(n,p)$ denote the binomial random $n$-vertex hypergraph in which each triangle is present with probability $p$. Determining the threshold for the appearance of perfect matchings in this model is the well-known ``Shamir's problem'' \cite{SS83}, which was resolved (up to a constant factor) in a seminal paper by Johansson, Kahn, and Vu \cite{JKV08} with sharp threshold and hitting time results obtained in later work of Kahn \cite{Kahn19,Kahn22}.

Of course, there is more than one high-dimensional analogue to (graphical) perfect matchings. It is just as natural to consider (spanning) \textit{Steiner triple systems} (i.e., triangle sets in which each \textit{pair} of vertices is contained in exactly one triangle) and their appearance in $\mb{G}^{(3)}(n,p)$. Here, much less is known and until recently such results seemed out of reach. Indeed, even the log-asymptotics of the number of Steiner triple systems was a mystery until the work of Keevash \cite{Kee18} (which built on his earlier breakthrough establishing the existence of designs \cite{Kee14}). Furthermore, both of these problems can be viewed under the common umbrella of determining the threshold for the existence of designs relative to a random set, with Shamir's problem corresponding to $(r,1)$-designs and Steiner triple systems to $(3,2)$-designs.

Our main result is the determination of the threshold for Steiner triple systems up to a sub-polynomial factor, which is the first higher-dimensional generalization of Erd\H{o}s--R\'enyi and Johansson--Kahn--Vu incorporating nontrivial designs.

\begin{theorem}\label{thm:main}
	Let $n \in \mb N$ satisfy $n \equiv 1,3 \pmod 6$. Let $\mc H \sim \mb{G}^{(3)} \left(n,\exp(C(\log n)^{3/4})/n \right)$, with $C>0$ a sufficiently large constant. With high probability\footnote{We say that a sequence of events, parameterized by $n$, holds \textit{with high probability} (\textit{w.h.p.}) if the probabilities of their occurrence tend to $1$.}, $\mc H$ contains an order-$n$ Steiner triple system.
\end{theorem}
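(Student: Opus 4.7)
The plan is to combine the fractional expectation-threshold machinery of Frankston--Kahn--Narayanan--Park with an iterative absorption construction that is sensitive to spreadness. Concretely, the task reduces to exhibiting a probability distribution $\mu$ on the set $\mathcal{S}$ of order-$n$ Steiner triple systems such that for every set $T$ of triples,
\[
\Pr_{S\sim\mu}\left[T\subseteq S\right]\le q^{|T|}\quad\text{with}\quad q=\exp(C'(\log n)^{3/4})/n,
\]
for a constant $C'$ that can be absorbed into the eventual $C$. Such a $q$-spread distribution, once produced, immediately yields \Cref{thm:main} via the threshold/expectation-threshold theorem (since a spanning STS is an up-set of triple-sets and the covering cost is absorbed by the extra $\log n$ factor in that reduction). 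Thus the entire proof reorients around building $\mu$.

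I would build $\mu$ by a vortex-based iterative absorption scheme. Fix a nested sequence $V=V_0\supsetneq V_1\supsetneq\cdots\supsetneq V_L$ of vertex sets with $|V_{i+1}|/|V_i|$ a small constant and $|V_L|$ a large constant. First, reserve absorber gadgets (both a global vortex absorber at the outside and a sequence of ``cover-down'' absorbers at each level) using a random Keevash-style design/approximation on carefully chosen link structures. Then, round-by-round, sample a partial Steiner triple system at level $i$ whose uncovered pairs lie inside $V_{i+1}$ and which is pseudorandom on $V_i\setminus V_{i+1}$, again by a random process (random greedy triangle packing augmented with the Rödl nibble). At the innermost level, finish with an exact completion on $V_L$ and then cascade back outward, using the absorbers reserved at each level. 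The distribution $\mu$ is the joint law of all these random choices. Spreadness of $\mu$ would follow multiplicatively: one shows that at each level, conditional on earlier choices, the probability that a given set $T_i$ of triples appears in the level-$i$ contribution is at most $q_i^{|T_i|}$, and the $q_i$'s combine to $q$.

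The main obstacle is the spread analysis of each round, not the absorption bookkeeping (which follows the established Barber--Kühn--Lo--Osthus / Glock--Kühn--Lo--Osthus blueprint). Naive random greedy triangle packings are spread on the typical scale but can be concentrated on rigid ``exceptional'' sub-configurations -- for instance, a small $T$ that is forced by just a few prior choices. To handle this one needs a \emph{booster}: a method that, given a putative $T$ causing a spread violation, produces an alternative packing of comparable probability that avoids part of $T$, thereby amortizing the probability of $T$ over exponentially many ``refoldings.'' The natural form of such a booster here is a switching/swap argument on triangle-decompositions of pseudorandom host graphs, iterated over the vortex levels; proving that each level admits such a booster with quantitative spread $n^{-1+o(1)}$ is where the $(\log n)^{3/4}$ exponent arises, because each level contributes a polylogarithmic slack that compounds across $L=\Theta(\log n/\log\log n)$ rounds. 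Finally, the absorbers themselves must be shown to be compatible with spreadness, which is the other delicate ingredient: one wants that any individual absorber triple is used by $\mu$ with probability $O(q)$, which requires randomizing the choice of absorbers from a large family and verifying a matching-style spread estimate on them.
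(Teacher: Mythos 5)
Your high-level plan (exhibit a $q$-spread measure on Steiner triple systems, then invoke Frankston--Kahn--Narayanan--Park) matches the paper's, but the mechanism you propose for obtaining the spread does not work, and the paper's actual mechanism is structurally different. The central problem with the ``multiplicative across levels'' idea is the innermost level. Once the uncovered edges have been pushed into a set $V_L$ (in your scheme of constant size, or in any scheme of size $m \ll n$), \emph{any} distribution over triangle-decompositions of a near-complete graph on $m$ vertices has spread no better than $m^{-1}$: a fixed triangle there appears with probability $\Theta(1/m)$. So the contribution of the last level is at best $m^{-1}$-spread, which is hopelessly far from $n^{-1+o(1)}$ whenever $m$ is small, and the multiplicative combination $\prod q_i$ cannot repair this — the problem is not a compounding of polylog losses but a single catastrophic loss at the bottom of the vortex. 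Your ``booster/refolding'' paragraph gestures in roughly the right direction but treats it as a secondary technicality about exceptional configurations, whereas it is the entire point, and you do not give the idea any concrete form.

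What the paper actually does is (i) a \emph{bootstrap over the spread exponent}: Theorem~3.1($\eta$) asserts existence of triangle-decompositions of near-complete $G$ inside $\mathbb{G}^{(3)}(n, n^{\eta}/n)$; the base case $\eta=1$ is known, and one proves Theorem($\eta_k$) from Theorem($\eta_{k-1}$) with $\eta_k = 2/(k+2)$; and (ii) an \emph{absorber-as-spread-booster}: a random ``template'' of copies of a flippable configuration $\mathcal{F}_{2m}$ is sampled outside the small set $X$ at density $\sim 1/n$. Iterative absorption (Proposition~3.2) covers $G \setminus G[X]$ with good spread $\approx |X|^{-1}$, the inductive hypothesis decomposes the residual on $X$ with spread $|X|^{\eta_{k-1}-1}$ (too large), and then every triangle $T$ of that inner decomposition is flipped out of $X$ by replacing $\mathcal{F}_{2m} \ni T$ with $\mathcal{F}_{2m}^{\ast}$, which avoids $T$. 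This flipping, together with a Lov\'asz Local Lemma argument to choose edge-disjoint absorbers simultaneously, is what converts the poor inner spread into good global spread. The $(\log n)^{3/4}$ exponent then arises from the recursion constraint $n \geq \exp(C/\eta^4)$ in the bootstrap, forcing $\eta \gtrsim (\log n)^{-1/4}$, i.e.\ density $\exp(c(\log n)^{3/4})/n$ — not from polylog slack accumulating over $\Theta(\log n/\log\log n)$ rounds as you conjecture. In short: you are missing both the inductive reduction over $\eta$ and the concrete role of the absorber-flips as a probabilistic spread-boosting device rather than a completion device, and without those ingredients the spread estimate you want does not follow.
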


\begin{remark}
	An order-$n$ Steiner triple system is equivalent to a triangle-decomposition of the complete graph $K_n$. A graph has a triangle-decomposition only if it is \textit{triangle-divisible}, i.e., its every degree is even and the number of edges is a multiple of $3$. For $K_n$ this is equivalent to the arithmetic condition $n \equiv 1,3 \pmod 6$, demonstrating the necessity of this assumption. That Steiner triple systems indeed exist whenever $n$ satisfies this condition is a famous classical theorem of Kirkman~\cite{kirkman1847problem}.
\end{remark}

Note that $\exp((\log n)^{3/4}) = n^{o(1)}$. Thus, \cref{thm:main} implies that the threshold for the appearance of Steiner triple systems is bounded above by $n^{-1+o(1)}$. A corresponding lower bound is obtained by observing that if $\mc{H}$ contains a Steiner triple system then every edge of $K_n$ is contained in at least one triangle of $\mc{H}$. A straightforward calculation (analogous to that for isolated vertices in $\mb{G}(n,p)$) reveals that the (sharp) threshold for this modified property is $2 \log n / n = n^{-1+o(1)}$. Hence, our result establishes the threshold up to a subpolynomial factor.

A recent breakthrough relating thresholds and fractional expectation-thresholds by Frankston, Kahn, Narayanan, and Park \cite{FKNP21} (and also a very recent breakthrough of Park and Pham \cite{PP22} resolving the Kahn--Kalai conjecture), as a corollary, gave an alternate and substantially simpler proof of the result of Johansson, Kahn, and Vu. This proof hinges on the ability to determine the \textit{fractional expectation-threshold}, which can be viewed as a linear program whose variables are all sets of hyperedges.

The sheer size of such programs suggests that determining the expectation-threshold or fractional expectation threshold is a difficult task in general. Instead, in previous applications, the uniform distribution on some desired class of objects is used to witness a lower bound, via a parameter known as ``spread''. In this light, the application of \cite{FKNP21} to Shamir's problem relies crucially on the fact that the enumeration of $(r,1)$-designs (as well as extensions of partial $(r,1)$-designs) is straightforward. However, determining the fractional expectation-threshold of Steiner triple systems is nontrivial; as noted in \cite[Section~8.D]{FKNP21} the error terms in the enumerative results of Keevash \cite{Kee18} are too large to prove that the uniform distribution on Steiner triple systems has sufficiently small spread. Therefore the authors of \cite{FKNP21} raise applying these methods to combinatorial designs as an interesting open problem. In this paper, we circumvent this difficulty by \textit{constructing} a (non-uniform) distribution on Steiner triple systems with small spread. We expect this approach to have further applications.

Steiner triple systems are closely related to \textit{Latin squares} (i.e., $n \times n$ matrices in which every row and column is a permutation of $\{1,2,\ldots,n\}$). The latter are naturally equivalent to (labeled) triangle-decompositions of $K_{n,n,n}$, with the three vertex parts corresponding to rows, columns, and symbols. With a few adjustments the proof of \cref{thm:main} yields a similar threshold result for Latin squares.

We use the following terminology: Let $S: [n]^2 \to 2^{[n]}$ be a function that assigns, to each cell in an $n \times n$ grid, a set of symbols from $\{1,2,\ldots,n\}$. Say that $S$ \textit{supports} an order-$n$ Latin square $L$ if for every $i,j \in [n]$ there holds $L(i,j) \in S(i,j)$. For $p \in [0,1]$ let $\mb{M}(n,p)$ be the distribution on functions $S: [n]^2 \to 2^{[n]}$ where for every $i,j,k \in [n]$, the symbol $k$ is included in $S(i,j)$ with probability $p$ independent of all other choices.

\begin{theorem}\label{thm:latin}
Let $n \in \mb{N}$ and let $\mc{S} \sim \mb{M}(n,\exp(C(\log n)^{3/4})/n)$, with $C>0$ a sufficiently large constant. W.h.p.\ $\mc{S}$ supports a Latin square.
\end{theorem}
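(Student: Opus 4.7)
The plan is to prove \cref{thm:latin} by running the same bootstrapping machinery developed for \cref{thm:main}, transported to the tripartite setting. The key observation is that an order-$n$ Latin square is equivalent to a triangle-decomposition of the complete tripartite graph $K_{n,n,n}$, whose three vertex classes represent rows, columns, and symbols; under this identification, $\mc{S} \sim \mb{M}(n,p)$ becomes precisely the random $3$-graph obtained by including each triangle of $K_{n,n,n}$ independently with probability $p$. Thus \cref{thm:latin} becomes the statement that a triangle-decomposition of $K_{n,n,n}$ exists inside this random $3$-graph with high probability.

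With this identification, I apply the Frankston--Kahn--Narayanan--Park framework exactly as in \cref{thm:main}: it suffices to construct a probability distribution $\mu$ on order-$n$ Latin squares (viewed as triangle-decompositions of $K_{n,n,n}$) that is $q$-spread for some $q = \exp(O((\log n)^{3/4}))/n$, meaning
\[ \mu\left(\{L : T \subseteq L\}\right) \le q^{|T|} \]
for every collection $T$ of triangles of $K_{n,n,n}$. The distribution $\mu$ is produced by the same iterative absorption / bootstrapping scheme used to build the non-uniform distribution on Steiner triple systems in \cref{thm:main}: one sequentially removes random approximate partial decompositions while retaining a prepared absorber structure that can mop up the eventual residual leftover, tracking the spread bound stage by stage.

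The adaptations from the Steiner triple system proof are cosmetic rather than conceptual. The host graph $K_n$ is everywhere replaced by $K_{n,n,n}$, and each absorber gadget, random-greedy partial-decomposition lemma, and concentration estimate used for \cref{thm:main} admits a faithful tripartite analogue; such analogues are standard in the iterative absorption literature. The divisibility restriction $n \equiv 1, 3 \pmod 6$ is automatic for $K_{n,n,n}$ for every $n$ and so disappears from the statement. Enumerative estimates for partial Latin squares agree with those for partial Steiner triple systems up to constants in the relevant exponents, so the per-stage spread bounds carry through with the same $\exp(O((\log n)^{3/4}))/n$ target.

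I expect the primary obstacle to be bookkeeping: every absorber-existence lemma, every boosting step that upgrades a weaker spread bound to a stronger one, and every concentration argument controlling the discrepancy of an approximate decomposition, must be re-executed on $K_{n,n,n}$ while faithfully tracking the spread of the generated partial Latin square. Ensuring that none of these re-executions spoils the final $\exp(O((\log n)^{3/4}))/n$ threshold requires care but not new ideas, since the tripartite setting is in most respects more regular than $K_n$. Consequently, \cref{thm:latin} should follow by running the proof of \cref{thm:main} essentially verbatim with $K_{n,n,n}$ in place of $K_n$ throughout.
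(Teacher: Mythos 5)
Your proposal is correct and matches the paper's approach essentially verbatim: the paper's \cref{sec:latin-squares} proves \cref{thm:latin} precisely by reinterpreting $\mb{M}(n,p)$ as a random triangle set in $K_{n,n,n}$, stating a tripartite analogue \cref{thm:latin-boosting} of \cref{thm:boosting}, and rerunning the bootstrapping and iterative absorption argument with the obvious tripartite substitutions. The only place your characterization of the changes as ``cosmetic'' slightly understates matters is \cref{lem:fractional-matching}: the weight-shifting gadgets behind the fractional matching do not transfer to the tripartite setting, and the paper instead invokes an adaptation of Montgomery's work (citing an external tripartite fractional matching lemma); the remaining adjustments --- choosing the vortex balanced across parts, tripartite degree-typicality conditions, skipping the random bipartition in the cover-down step, and checking that $\mc{F}_{2m}$ is properly $3$-colorable so the absorbers embed tripartitely --- are indeed routine as you say.
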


Prior to this work the only known upper bounds on the thresholds in \cref{thm:main,thm:latin} were quite far from the $n^{o(1)-1}$ proved here. For Latin squares, Andr\'en, Casselgren, and \"Ohman~\cite{ACO13} proved that there exists a constant $p < 1$ such that w.h.p.\ $\mb{M}(n,p)$ supports a Latin square. For Steiner triple systems, Simkin \cite{Sim17} observed that Keevash's method of randomized algebraic construction~\cite{Kee14} can be used to show that for a sufficiently small $\varepsilon > 0$, w.h.p.\ $\mb{G}^{(3)}(n,n^{-\varepsilon})$ contains a Steiner triple system. 

\newcommand{\psts}{{p_{\mr{STS}}}}

Before moving on to proofs we mention an interesting consequence of \cref{thm:main}: the threshold for the appearance of Steiner triple systems is sharp, in the following sense.

\begin{corollary}\label{cor:sharp-threshold}
There is a function $\psts(n)$ such that for all $\varepsilon > 0$, when $n\equiv 1,3\pmod{6}$ w.h.p.~$\mb{G}^{(3)}(n,(1+\varepsilon)\psts(n))$ contains a Steiner triple system but w.h.p.~$\mb{G}^{(3)}(n,(1-\varepsilon)\psts(n))$ does not.
\end{corollary}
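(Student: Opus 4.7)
The plan is to invoke Friedgut's sharp threshold theorem for $S_n$-symmetric monotone properties. Define $\psts(n)$ implicitly, for each $n \equiv 1, 3 \pmod 6$, by
\[
\Pr\!\left[\mb G^{(3)}(n, \psts(n)) \text{ contains a Steiner triple system}\right] = \tfrac12.
\]
This is well-defined by monotonicity of the STS property in $p$. \Cref{thm:main} yields the upper bound $\psts(n) \leq \exp(C(\log n)^{3/4})/n$, while the edge-covering requirement combined with a standard Poisson calculation (analogous to the isolated-vertex argument in $\mb G(n,p)$, as noted after \Cref{thm:main}) gives the lower bound $\psts(n) \geq (1-o(1)) \cdot 2\log n/n$. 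Hence $\psts(n) = n^{-1+o(1)} \to 0$, so Friedgut's framework applies and the candidate sharp threshold $\psts$ is trapped in a window in which both endpoints tend to $0$.

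Next, I would apply Friedgut's theorem, which reduces the sharpness of the threshold to ruling out a \emph{local approximator}: a bounded-size $3$-uniform hypergraph $H$ and constant $\delta > 0$ such that for infinitely many $n$, conditioning $\mb G^{(3)}(n, \psts)$ on containing a random labeled copy of $H$ shifts $\Pr[\text{STS}]$ by at least $\delta$. The STS property is clearly $S_n$-invariant and monotone, so the theorem is directly applicable (in the $3$-uniform setting via Bourgain's extension).

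I would rule out any such $H$ by a vertex-swap/switching argument exploiting the full $S_n$-symmetry of the STS property. A spanning Steiner triple system involves $\binom{n}{2}/3 = \Theta(n^2)$ triangles subject to rigid edge-coverage constraints, while any candidate local approximator $H$ involves $O(1)$ triangles on $O(1)$ vertices. By selecting a uniformly random $|V(H)|$-subset and re-randomizing its incident triangles, one shows that conditioning on the appearance of a labeled copy of $H$ perturbs the probability of the global, spanning STS property by only $o(1)$. Therefore no local obstruction exists, and Friedgut's theorem delivers sharpness.

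The main obstacle is the formal verification of non-locality. Rigorously implementing the switching is subtle because conditioning on $H$ does alter the local edge-coverage structure in a correlated way with the global STS property; however, the argument proceeds along the same general template used for other spanning-structure sharp thresholds (perfect matchings in $\mb G(n,p)$, Hamilton cycles, Shamir's problem, etc.), adapted here to the symmetric hypergraph-design setting by exploiting $S_n$-invariance together with the global, rigid nature of Steiner triple systems.
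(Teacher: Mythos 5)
Your proposal and the paper both invoke Friedgut's theory, but you take a substantially harder route that you do not actually complete, whereas the paper uses a shortcut that makes the whole argument almost immediate. Your plan is to apply Friedgut's criterion in its ``rule out local approximators'' form, and you freely admit that ``the formal verification of non-locality\ldots is subtle''; that sentence is precisely where the gap sits. Your sketch of the switching argument is not a proof, and in fact the bald claim that conditioning on a labeled copy of a bounded $H$ ``perturbs the probability of the global, spanning STS property by only $o(1)$'' is not even the right statement to prove — Friedgut's condition concerns whether the effect of conditioning on $H$ can be matched by a small boost in $p$, not whether it is negligible outright, and conditioning on extra triangles certainly does shift $\mb{P}[\text{STS}]$ by a non-negligible amount. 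So as written the argument would need both a correct formulation and a genuine proof, neither of which appears.

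The paper avoids the entire issue. A consequence of Friedgut's characterization of sharp thresholds (specifically \cite[Theorem~2.1]{Fri05} and the remarks following it) is that in the present setting any \emph{coarse} threshold must be of the form $\Theta(n^\alpha)$. Thus one does not need to exhibit or exclude local approximators at all: it suffices to show that $\psts(n)$ is \emph{not} $\Theta(n^\alpha)$ for any $\alpha$. Your first paragraph already establishes exactly the bounds needed, $\psts(n) = \Omega(n^{-1}\log n)$ and $\psts(n) \le n^{-1+o(1)}$, which together rule out $\psts(n)=\Theta(n^\alpha)$ (any such $\alpha$ would be forced to equal $-1$, contradicting the $\log n$ gap). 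So the conclusion ``therefore the threshold is sharp'' follows immediately, no switching required. I would recommend replacing the entire second half of your proof with this observation — it turns your unfinished and technically delicate step into a one-line deduction.
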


This is surprising, since we have not determined what the threshold actually is. Nevertheless, \cref{cor:sharp-threshold} follows from Friedgut's characterization of sharp thresholds~\cite{Fri99,Fri05}. Indeed, for $n\equiv 1,3 \pmod 6$, let $\psts(n)$ be the threshold for containing Steiner triple systems (i.e.,~$\mb{G}^{(3)}(n,\psts(n))$ contains a Steiner triple system with probability $1/2$). \cref{thm:main} tells us that $\psts(n) \leq n^{o(1)-1}$. On the other hand, by considering the disappearance of vertex pairs not contained in a triangle, we concluded that $\psts(n) = \Omega(n^{-1}\log n)$. Hence, $\psts(n) \neq \Theta(n^\alpha)$ for any $\alpha \in \mb R$. However, a consequence of \cite[Theorem 2.1]{Fri05} and the remarks immediately after is that in our setting (sampling hypergraphs), coarse thresholds are limited to the form $\Theta(n^\alpha)$, implying that our threshold is sharp.

Regarding Latin squares, the threshold is sharp for essentially the same reasons. Although triangle-decompositions of $K_{n,n,n}$ are not invariant under vertex permutations so that \cite{Fri05} does not directly apply, similar methods can be used to deduce a sharp threshold \cite{Fri22}.

\subsection{Techniques for bounding thresholds}

Finding thresholds for spanning structures in random graphs and hypergraphs has played a major role in the field since its inception. Prominent examples include thresholds for containing a spanning tree (which is equivalent to connectivity)~\cite{ER59}, a perfect matching \cite{ER64,ER66}, a Hamilton cycle \cite{Pos76}, a triangle-factor \cite{JKV08}, and a given bounded-degree spanning tree \cite{Mon19}.

Lower bounds on the thresholds for each of these properties (and many others) can be obtained in more or less the same way: fixing a vertex $v$, it is contained in a spanning tree or perfect matching only if its degree is at least $1$. Similarly, it is contained in a Hamilton cycle only if its degree is at least $2$. Finally, it is contained in a triangle-factor only if it is contained in at least one triangle. By computing the expectation of each of these random variables and applying Markov's inequality we obtain a lower bound of $\Omega(n^{-1})$ for connectivity, perfect matchings, and Hamiltonicity, and a lower bound of $\Omega(n^{-2/3})$ for existence of a triangle-factor. Avoiding formal definitions (which can be found in \cite{KK07}), the maximal lower bound obtained by similar arguments is known as the \textit{expectation-threshold} for the property.

Surprisingly, these easily-obtained lower bounds turn out to be within a logarithmic factor of the true thresholds. However, in sharp contrast to the lower bounds, the original proofs of the corresponding upper bounds are problem-specific. This disparity (and the associated difficulty of obtaining thresholds for some properties, as in Shamir's problem) motivated a family of beautiful conjectures of Kahn and Kalai \cite{KK07}. The main conjecture is that the threshold for a monotone property is always within a logarithmic factor of its expectation-threshold.

In a very recent breakthrough, Park and Pham \cite{PP22} gave an ingenious proof of the Kahn--Kalai conjecture. However, for our application (and many others), a fractional version of the conjecture, due to Talagrand \cite{Tal10}, suffices. The so-called \textit{fractional expectation-threshold vs.\ threshold conjecture} was proved by Frankston, Kahn, Narayanan, and Park \cite{FKNP21} in an earlier breakthrough. These works are related to yet another, yet earlier, breakthrough: the advance on the \textit{sunflower conjecture} due to Alweiss, Lovett, Wu, and Zhang \cite{ALWZ21}. For our purposes it suffices to consider a corollary of these results, for which we need the next definition.

\begin{definition}
Consider a finite ground set $Z$ and fix a nonempty collection of subsets $\mc{H} \subseteq 2^Z$. Let $\mu$ be a probability measure on $\mc{H}$. For $q > 0$ we say that $\mu$ is \textit{$q$-spread} if for every set $S \subseteq Z$:
\[\mu \left( \{ A \in \mc{H} : S \subseteq A \} \right) \le q^{|S|}.\]
\end{definition}

The next theorem, relating spread measures and thresholds, is due to Frankston, Kahn, Narayanan, and Park \cite{FKNP21}. We have slightly tailored it to our setting.

\begin{theorem}[{From \cite[Theorem~1.6]{FKNP21}}]\label{thm:FKNP}
	There exists a constant $C = C_{\ref{thm:FKNP}} > 0$ such that the following holds. Consider a non-empty ground set $Z$ and fix a nonempty collection of subsets $\mc{H}\subseteq 2^Z$. Suppose that there exists a $q$-spread probability measure on $\mc{H}$.  Then a random binomial subset of $Z$ where each element is sampled with probability $\min(Cq\log|Z|,1)$ contains an element of $\mc{H}$ as a subset with probability at least $3/4$.
\end{theorem}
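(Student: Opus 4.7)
The plan is to prove \cref{thm:FKNP} along the lines of Frankston--Kahn--Narayanan--Park, via an iterative coupling that converts the $q$-spread hypothesis into the desired containment. Write $N = |Z|$, set $p = Cq \log N$, and split $W \sim \mr{Bin}(Z, p)$ as $W = W_1 \cup \cdots \cup W_r$ with $r = \Theta(\log N)$ independent binomial pieces each of density $p' = \Theta(q)$; let $U_i = W_1 \cup \cdots \cup W_i$. Independently sample $A_0 \sim \mu$. The goal is to run a refinement $A_0 \rightsquigarrow A_1 \rightsquigarrow \cdots \rightsquigarrow A_r \in \mc{H}$ so that, with probability at least $3/4$, $A_r \subseteq W$.

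The engine is a \emph{one-round halving} lemma. Maintaining a current target $A_i \in \mc{H}$ and \emph{deficit} $V_i = A_i \setminus U_i$, I would show that with at least constant probability over $W_{i+1}$, there exists a new target $A_{i+1} \in \mc{H}$ satisfying $A_i \setminus V_i \subseteq A_{i+1}$, satisfying $(A_{i+1} \setminus A_i) \cap U_i = \emptyset$ (any swapped-in elements come from the fresh round), and satisfying $|V_{i+1}| \le |V_i|/2$. Starting from $|V_0| \le N$ and halving $O(\log N)$ times drives $|V_r|$ below $1$, i.e.~$A_r \subseteq W$.

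To prove the halving lemma, I would invoke the $q$-spread property in the following weighted-enumeration form: for any ``glue'' set $S \subseteq Z \setminus U_i$ which meets $V_i$ and would parameterize a replacement, one has $\mu(\{A \in \mc{H} : (A_i \setminus V_i) \cup S \subseteq A\}) \le q^{|A_i \setminus V_i| + |S|}$. On the other hand, the probability that $S \subseteq W_{i+1}$ is $(p')^{|S|}$. Setting $p' = Cq$ and summing this first-moment calculation over all valid $(A_{i+1}, S)$ pairs that would halve the deficit, one obtains that the expected number of successful swaps is $\Omega(1)$; a standard Markov-type argument then yields constant-probability success in each round.

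The main obstacle, I expect, is the second step --- formalizing the weighted enumeration so that the $q$-spread bound, which is stated uniformly over sets $S$, correctly controls the expectation even though the \emph{choice} of $A_{i+1}$ is adaptive. This requires a coupling in which the candidate target $A_{i+1}$ is sampled from a conditioned version of $\mu$, and in which the combinatorial decomposition of $A_{i+1}$ into ``already covered'' and ``swap'' pieces can be encoded so as to pay only $q^{|S|}$ per swap set. Once the halving lemma is in hand, a union bound over the $r = O(\log N)$ rounds (choosing $C$ large enough to overwhelm this) yields the success probability $\ge 3/4$ claimed.
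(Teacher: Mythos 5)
The paper does not prove this statement; it is imported as a black box from \cite[Theorem~1.6]{FKNP21} (with slight cosmetic tailoring), so there is no in-paper proof to compare against. Your outline does follow the broad Frankston--Kahn--Narayanan--Park blueprint---split the random subset into $r = \Theta(\log N)$ independent rounds, maintain a current target $A_i \in \mc{H}$, and try to shrink the uncovered deficit $V_i$ by a constant factor per round---so the skeleton is right. However, two of the mechanisms you invoke to close the argument do not actually close it.

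First, the per-round halving cannot be established by the first-moment calculation you sketch. Showing that the expected number of ``successful swaps'' is $\Omega(1)$ does not imply constant-probability existence of a swap; Markov's inequality bounds $\mb{P}[X \geq 1]$ from \emph{above} by $\mb{E}X$, and in general $\mb{E}X \geq c$ gives no lower bound on $\mb{P}[X \geq 1]$ (the expectation can be carried by a rare, large spike). Conversely, the $q$-spread hypothesis is an \emph{upper} bound on $\mu$-mass and so only controls the count of good replacements from above, which is the useless direction for a first-moment existence argument. The actual engine in \cite{FKNP21} is a compression/encoding argument (in the style of Alweiss--Lovett--Wu--Zhang): one shows by a careful counting that the set of ``bad'' pairs $(A,W)$ for which no replacement $A' \in \mc H$ with $|A' \setminus (U_i \cup W_{i+1})| \leq |V_i|/2$ exists would contradict the spread bound, because each bad $A$ could be described too succinctly. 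This is qualitatively different from the Markov-on-a-count argument you propose.

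Second, the conclusion ``constant-probability success in each round, then union bound over $r = O(\log N)$ rounds'' does not yield probability $\geq 3/4$. If each round fails with a fixed constant probability, the union bound over $\Theta(\log N)$ rounds is vacuous, and the probability that all rounds succeed tends to $0$. The correct finish is to prove a conditional-expectation decay $\mb{E}[|V_{i+1}| \mid V_i] \leq c |V_i|$ for some $c<1$ (using that one may always keep $A_{i+1}=A_i$ so $|V_{i+1}|\leq|V_i|$ holds unconditionally, combined with the constant-probability halving), iterate to get $\mb{E}|V_r| \leq c^r N < 1/4$, and apply Markov \emph{once} at the end: $\mb{P}[|V_r| \geq 1] < 1/4$. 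Alternatively, one can boost each round to failure probability $o(1/\log N)$ and then union bound, but that is not what a ``constant-probability'' round gives you. As written, the proposal has these two genuine gaps, even though the overall iterative-refinement architecture matches \cite{FKNP21}.
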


For many graph families, including those mentioned above, the spread of the uniform distribution is easily seen to match the lower bounds on the threshold. Thus, \cref{thm:FKNP} immediately determines these thresholds up to a logarithmic factor. However, \cref{thm:FKNP} does not immediately imply any bound at all on the threshold for $\mb{G}^{(3)}(n,p)$ to contain a Steiner triple system. The issue is that currently, our understanding of the uniform distribution on Steiner triple systems is rather poor. Indeed, as remarked in \cite[Section~8.D]{FKNP21}, even the uncertainty in the \textit{number} of order-$n$ Steiner triple systems is large enough that it precludes any useful bounds on the spread.

Although the uniform distribution is the most natural one with which to apply \cref{thm:FKNP}, this is certainly not required. We prove \cref{thm:main} by designing a distribution on Steiner triple systems that is $n^{o(1)-1}$-spread \textit{by construction}, and then applying \cref{thm:FKNP}.

\subsection{Spread distributions and iterative absorption}

The $n^{o(1)-1}$-spread distribution used to prove \cref{thm:main} is defined implicitly by a randomized algorithm to construct Steiner triple systems. In order to outline this algorithm we briefly recount some recent breakthroughs in design theory.

We begin with the \textit{triangle removal process}, which is closely related to the influential \textit{R\"odl nibble}~\cite{Rod85}. This is the following random greedy algorithm to construct a partial Steiner triple system: Beginning with $G=K_n$ repeatedly and for as long as possible delete, uniformly at random, a triangle from $G$ and add it to a growing collection of triples. Spencer \cite{Spe95} and R\"odl and Thoma \cite{RT96} independently proved that w.h.p.\ this process terminates when $G$ has only $o(n^2)$ edges. Equivalently, this method produces an approximate Steiner triple system.

It is straightforward to adapt the triangle removal process so that the triangle set it produces has spread $O(n^{o(1)-1})$. Perhaps the simplest way is to first restrict the available triangles to a prescribed binomial random subset of density (say) $(\log n)^2/n$, and then show that the process is still likely to construct an approximate Steiner triple system.

Given the success of the triangle removal process, a natural way to construct an exact Steiner triple system is to find a triangle-decomposition of the edges remaining at the end of the process\footnote{Strictly speaking, one must stop the triangle removal process before its natural termination, at which point there are no triangles in $G$ by definition.}. This is essentially what Keevash does with his breakthrough method of \textit{randomized algebraic constructions} \cite{Kee14,Kee18}. Moreover, Keevash's method is incredibly powerful in that it proves the existence of designs with arbitrary parameters, which was a central question in combinatorics since the nineteenth century. Unfortunately, the algebraic component of Keevash's construction has rather poor spread, and so is unsuitable for our application.

An alternative to Keevash's method is \textit{iterative absorption}, developed by K\"uhn, Osthus, and collaborators \cite{KO13,KKO15, BKLO16}. This method gave an alternate proof of the existence of designs \cite{GKLO16} using purely probabilistic and combinatorial methods. In this paper we mostly follow its specialization by Barber, Glock, K\"uhn, Lo, Montgomery, and Osthus \cite{BGKLMO20} to triangle-decompositions.

A key insight is that by using a modified version of the triangle removal process, the uncovered edges at the end of the process can be ``localized'' to a small vertex set $U_1 \subseteq V(K_n)$. That is, after fixing $U_1 \subseteq V(K_n)$ (satisfying, say, $|U_1| \approx \varepsilon n$ for a small $\varepsilon > 0$), a multi-stage randomized ``cover-down'' procedure can produce a partial Steiner triple system that covers all edges in $K_n$ not spanned by $U_1$. Furthermore, the graph of uncovered edges in $U_1$ is nearly-complete. By repeating this process the uncovered edges can be iteratively localized to sets $U_1 \supseteq U_2 \supseteq \cdots \supseteq U_\ell = X$, where $X$ may be quite small. Since the goal is to construct an exact Steiner triple system, the iterative process is preceded by setting aside an ``absorber'' for $X$. This is a graph $H \subseteq K_n$ with the property that for any possible remainder graph $L$ on $X$, the graph $H \cup L$ admits a triangle-decomposition.

Iterative absorption, as outlined in \cite{BGKLMO20}, does not itself produce a distribution with sufficient spread. The issue is that for each $U_i$, the triangle set constructed on $U_i$ forms a nearly complete graph and contains $\Omega(|U_i|^2)$ triangles. Thus, the best spread one can hope for in this method is $\Omega(|X|^{-1})$, which is far larger than $1/n$ since $X$ must be small in order to construct the absorber $H$.

As a remedy, our algorithm combines iterative absorption with a bootstrapping scheme that iteratively constructs distributions with better and better spread. Concretely, let $P(\eta)$ be the proposition that for every sufficiently large, near-complete, and triangle-divisible graph $G$ there exists an $n^{\eta}/n$-spread distribution over triangle-decompositions of $G$. Note that $P(0)$ would imply \cref{thm:main}, since we may take $G=K_n$. We remark that here, and in the remainder of this outline, our goal is to provide a clear and concise summary of our argument. Thus, we take some leeway and are not as precise with some statements as we will be in the proof. For example, proposition $P(\eta)$ is slightly different from its analogue, \cref{thm:boosting}$(\eta)$.

We define the sequence $\eta_k = 2/(2+k)$, and we will inductively show that $P(\eta_k)$ holds. Since $\eta_k \to 0$, this implies that there exists an $n^{o(1)-1}$-spread distribution of Steiner triple systems.  The fact that $P(\eta_0 = 1)$ holds is itself a non-trivial fact; this follows implicitly from \cite{Kee18} and explicitly from \cite{BGKLMO20}.

Now suppose that $P(\eta_{k-1})$ holds. We wish to show that $P(\eta_k)$ holds as well. Let $G$ be a large, near-complete, triangle-divisible graph. We wish to construct an $n^{\eta_k-1}$-spread distribution on the triangle-decompositions of $G$. We proceed as follows: We first set aside a vertex set $X \subseteq V(G)$ of size approximately $n^{1-\eta_k}$. Next, we set aside a small, random, absorbing triangle set $\mc H$ in $G$. The triangle set $\mc H$ resembles a binomial random triangle set of density $1/n$, ensuring that it does not negatively impact the spread. As we will explain momentarily, this absorber serves a different purpose than the absorber in \cite{BGKLMO20}.

After setting aside the absorber, we use iterative absorption to find a triangle set $\mc S$ that covers all edges in $G \setminus (E(\mc H) \cup G[X])$. Let $L \subseteq G[X]$ denote the graph of uncovered edges. We remark that since $\mc S$ is constructed by processes resembling the triangle removal process, it is straightforward to ensure that the spread of $\mc S$ is approximately $|X|^{-1} \approx n^{\eta_k-1}$.

Now, by assumption, there exists an $|X|^{\eta_{k-1}-1}$-spread distribution over the triangle-decompositions of $L$. Let $\mc L$ be a triangle-decomposition sampled according to this distribution. Observe that $\mc S \cup \mc L$ is a triangle-decomposition of $G$. However, due to the presence of $\mc L$, its spread is $|X|^{\eta_{k-1}-1} \gg n^{\eta_k-1}$. This is where the absorber comes in: its purpose is to ``spread'' the probability mass of the triangles induced by $X$ over the rest of the graph. This reduces the overall spread of the Steiner triple system. Specifically, $\mc H$ has the property that for every triangle $T$ in $X$, there exist many configurations $\mc H' \subseteq \mc H$ such that $\mc H' \cup \{T\}$ can be replaced by a triangle set that does not use $T$. Furthermore, given the triangle-decomposition $\mc L$ of $L$, it is possible to choose a set of such configurations $\{\mc H'_T\}_{T \in \mc L}$ that are mutually disjoint. Thus, all the triangles used in $\mc L$ can be replaced by a set of triangles that are not spanned by $X$. Finally, if the choice of these configurations are randomly chosen in an appropriate way, the spread of the resulting Steiner triple system is less than $n^{\eta_k-1}$. Since there exists an $n^{\eta_k-1}$-spread distribution of triangle-decompositions of $G$, the proposition $P(\eta_k)$ holds.
 
Finally, the modification for Latin squares is straightforward; we detail the minor changes in \cref{sec:latin-squares}.

\subsection{Absorbers as spread boosters}

We stress that unlike traditional uses of absorbers, we cannot intentionally plant specific absorbers for each triangle that might appear in $\mc{L}$ (though it may seem like there is available space within $K_n$ for such constructions). The reason is that any specific finite absorber w.h.p.\ will not appear at all in $\mb{G}(n,n^{\theta-1})$ for $\theta>0$ sufficiently small. In fact, as far as algorithmically finding absorbers in $\mc H$ goes, there seems to be a ``barrier'' around density $1/\sqrt{n}$ which is polynomially far from the conjectured threshold of $O(\log n/n)$.

This is different to other threshold problems. For example, absorber-based algorithms were used to bound the threshold for the appearance of the square of a Hamilton cycle in $\mb{G}(n,p)$ up to a subpolynomial factor \cite{KO12}. (Eventually, the true threshold was recovered by Kahn, Narayanan, and Park \cite{KNP21} using ideas related to \cite{FKNP21} and without use of absorbers.) In contrast, we use the richness of possible configurations within $\mc{H}$, which is essentially spread by definition, to show that there is some way to absorb $\mc{L}$ in a spread manner. However, the absorbers are not necessarily themselves contained in $\mc H$. Thus one can think of our absorber as a sparsification template that facilitates boosting the spread, after which the non-algorithmic \cite{FKNP21} is applied. To our knowledge this is the first such use of absorbers.

\subsection{Further directions}

We briefly remark on a few natural questions arising from this work. First, the next two conjectures convey our intuition that the disappearance of uncovered edges probabilistically tells the whole story regarding the appearance of Steiner triple systems. The first conjecture locates a sharp threshold at $2\log n / n$, while the second is the corresponding hitting time statement.

\begin{conjecture}\label{conj:optimistic}
Let $n\in\mb{N}$ satisfy $n\equiv 1,3 \pmod 6$ and fix $\epsilon > 0$. If $\mc{H}\sim\mc{G}^{(3)}(n, (2+\epsilon)\log n/n)$, then with high probability $\mc{H}$ contains an order-$n$ Steiner triple system.
\end{conjecture}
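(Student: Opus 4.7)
The lower bound $\psts(n) \gtrsim 2\log n/n$ is already known from the pair-coverage threshold, and \cref{conj:optimistic} asserts that pair coverage is the only obstruction. The natural plan combines a sharpened version of the bootstrap underlying \cref{thm:main} with a two-round exposure to extract the sharp constant.

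First, I would aim to upgrade the $n^{o(1)-1}$-spread distribution produced by the paper's bootstrap to an \emph{$O(1/n)$-spread} distribution on Steiner triple systems, at which point \cref{thm:FKNP} immediately yields $\psts(n) = O(\log n/n)$ with an unspecified constant. The iteration $\eta_k = 2/(2+k)$ tends to $0$ but never reaches it because each stage loses a polynomial slack when it rebuilds its absorber. To eliminate this slack I would attempt to redesign the absorber so that it maintains a $C/n$-spread invariant throughout \emph{every} stage of the iterative absorption, rather than merely multiplying the exponent by a constant factor per round --- perhaps by using a single global absorber that covers all scales $U_1 \supseteq U_2 \supseteq \cdots$ simultaneously and whose own contribution to the spread is already at the target level.

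Second, granted an $O(1/n)$-spread distribution, let $p = (2+\varepsilon)\log n/n$ and split $\mc H \sim \mb G^{(3)}(n, p)$ as $\mc H_1 \cup \mc H_2$ with $\mc H_1 \sim \mb G^{(3)}(n, q\log n/n)$ for a constant $q$ to which the refined main theorem applies, and $\mc H_2$ independent of density essentially $p - q\log n/n$. Using the sharpened bootstrap, $\mc H_1$ contains a partial STS $\mc P$ whose leave $L$ is pseudorandom (say, of maximum degree $o(n)$ and edge count $o(n^2)$). Because $\mc H_2$ has density strictly above the pair-coverage threshold, every pair in $L$ lies in $\Omega(\log n)$ triangles of $\mc H_2$ with high probability, so one can perform local switchings on $\mc P$ --- using triangles from $\mc H_2$ --- to absorb each uncovered pair and thereby promote $\mc P$ to a full STS.

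The principal obstacle is the first step: the bootstrap is engineered so that $\eta_k \to 0$, but terminating at $\eta = 0$ requires defeating the polynomial slack at every stage and appears to demand a qualitatively new absorber design. An alternative route would be to analyze directly the hitting time of the process that inserts random triples one at a time; this would sidestep the $\log|Z|$ loss in \cref{thm:FKNP}, but introduces intricate coupling between the absorber construction and the hitting-time conditioning. Either route seems to require substantial new ideas beyond those presented in this paper.
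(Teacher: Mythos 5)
This statement is an open \emph{conjecture} in the paper, not a theorem: the authors explicitly remark, immediately after stating it, that a threshold of $O(\log n/n)$ would follow from an $O(n^{-1})$-spread distribution, ``while \cref{conj:optimistic} seems to require ideas beyond those in [FKNP21].'' There is no proof in the paper to compare against, and your own proposal does not constitute one either — you say as much in your final paragraph. So the question is whether your sketch, if fleshed out, could plausibly close the gap; I do not think it can, for two concrete reasons.

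First, your step one, even if it succeeded, would not yield the conjecture. An $O(1/n)$-spread distribution fed into \cref{thm:FKNP} gives threshold $C_{\ref{thm:FKNP}} \cdot (1/n) \cdot \log \binom{n}{3} = \Theta(\log n / n)$, but the constant $C_{\ref{thm:FKNP}}$ is large and uncontrolled. The $\log|Z|$ loss and the constant in Frankston--Kahn--Narayanan--Park are structural to that theorem; no amount of improved spread reduces them to the sharp constant $2$. This is exactly the limitation the paper flags. Second, your step two has a genuine logical gap: you observe that every uncovered pair of the leave $L$ lies in $\Omega(\log n)$ triangles of $\mc{H}_2$, but those triangles need not lie inside $L$ — two of their three edges will generically already be covered by the partial system $\mc{P}$. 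Using such a triangle requires a switch that kicks other pairs out of $\mc{P}$, and you give no mechanism for controlling the resulting cascade. That control is the entire difficulty; ``local switchings'' is a name for the problem, not a solution to it. Absent a concrete switching scheme with a termination argument (as exists, say, for completing Latin rectangles, but which is not known to be available in this sparse random setting), the second step does not follow from the first, and the proposal does not establish the conjecture.
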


\begin{conjecture}
	Let $n\in\mb{N}$ satisfy $n\equiv 1,3 \pmod 6$ and let $T_1,T_2,\ldots$ be a uniformly random ordering of $\binom{[n]}{3}$. W.h.p.\ the first prefix $T_1,\ldots,T_k$ that covers each $2$-edge at least once contains a Steiner triple system.
\end{conjecture}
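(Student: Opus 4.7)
The plan is to reduce the hitting time statement to a sharp binomial threshold result via the standard equivalence between uniform orderings and binomial models. A coupon-collector computation shows that the hitting time $\tau$ at which every pair is first covered satisfies $\tau = (1+o(1))\frac{n^2\log n}{3}$ with high probability, corresponding to density $p_\tau = \tau/\binom{n}{3} = (2+o(1))\log n/n$ in $\mb{G}^{(3)}(n,p)$. Via this coupling the hitting time statement is equivalent to the following binomial-conditional claim: at density $p = 2\log n/n$ conditioned on every pair being covered by some triple, $\mb{G}^{(3)}(n,p)$ contains a Steiner triple system with high probability.

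My approach to this binomial-conditional statement proceeds in two parts. First, strengthen the bootstrapping of \cref{thm:main} to yield an $O(1/n)$-spread distribution on order-$n$ Steiner triple systems of $K_n$, rather than the present $n^{o(1)-1}$-spread construction; by \cref{thm:FKNP} this automatically produces an STS inside $\mb{G}^{(3)}(n,C\log n/n)$ for some absolute constant $C$, settling \cref{conj:optimistic} up to the leading constant. The main step here is to eliminate the cumulative subpolynomial loss inherent in the sequence $\eta_k = 2/(2+k)$, for instance by redesigning the cover-down stage or the absorber so that each round of bootstrapping costs at most a constant factor in spread. A natural candidate is to replace the geometric vertex partition $U_1\supseteq U_2\supseteq \cdots$ with a single, more efficient cover-down phase leaving a small, well-structured remainder that the absorber can spread-boost directly.

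The second part is to push the leading constant from $C$ down to exactly $2$. Since sharpness of the STS threshold is already established via \cref{cor:sharp-threshold} and the covering threshold gives a matching lower bound at $2\log n/n$, it suffices to prove the upper bound $(2+\epsilon)\log n/n$ for arbitrarily small $\epsilon>0$, which is \cref{conj:optimistic}. A potential route is to combine the $O(1/n)$-spread distribution with a delicate second moment or resampling argument showing that just below $(2+\epsilon)\log n/n$, the obstructions to finding an STS coincide with the obstructions to covering all pairs, so that conditional on covering the STS exists automatically. Naive union bounds over ``late'' triples in a sprinkling coupling do not suffice here because an $O(1/n)$-spread STS still places mass $\Theta(1/n)$ on each triple while there are $\Theta(\epsilon n^2\log n)$ late triples to avoid; a conditional variant of \cref{thm:FKNP} adapted to the covering condition seems required.

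The hardest step is the second one: pinning the leading constant at the pair-covering threshold requires a structural statement essentially saying that each last-uncovered pair can always be completed into a full Steiner triple system, and the spread-based methods by themselves do not yield such local information. This difficulty is closely related to the issue highlighted in \cite{FKNP21} of precisely computing fractional expectation-thresholds for designs, and resolving it likely demands combining the present paper's spread-boosting technique with sharp structural control over almost-complete partial Steiner triple systems near the covering threshold.
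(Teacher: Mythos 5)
This statement is a \emph{conjecture} in the paper, not a theorem; the authors offer no proof, and explicitly flag it (together with \cref{conj:optimistic}) in the ``Further directions'' subsection as requiring ``ideas beyond those in \cite{FKNP21}.'' Your proposal is therefore not being compared against any argument in the paper; it is a research sketch for an open problem, and as such it has several genuine, unbridged gaps.

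Concretely: your Part~1 asks for an $O(n^{-1})$-spread distribution on Steiner triple systems. The paper explicitly identifies this as open and explains the obstruction --- the iterative absorption leaves a remainder concentrated on a set $X$, giving spread no better than $|X|^{-1}$, and $X$ cannot be taken to have size $\Theta(n)$ because the recursive decomposition of the remainder then dominates the spread. Your suggestion to ``replace the geometric vertex partition with a single, more efficient cover-down phase'' names the goal but does not supply a mechanism for avoiding this bottleneck; the bootstrap $\eta_k = 2/(k+2)$ is not an incidental inefficiency but the price of the recursion, and it is not clear how a ``single phase'' would escape it. Your Part~2 is even further from a proof: even granting $O(n^{-1})$-spread, \cref{thm:FKNP} only gives $\mb{G}^{(3)}(n,C\log n/n)$ for an unspecified constant $C$, and no ``conditional variant of \cref{thm:FKNP} adapted to the covering condition'' is known. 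You acknowledge this yourself (``seems required''). Finally, the hitting time statement you are asked about is \emph{stronger} than \cref{conj:optimistic}: it requires the STS to materialize exactly when the last pair is covered, which demands local structural control near the critical window that spread-based methods alone do not provide. In short, what you have written is a reasonable roadmap of the difficulties, but each of the two or three key steps is itself an open problem, so this does not constitute a proof and should not be presented as one.
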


We note that a threshold of $O(\log n/n)$ would follow from the existence of an $O(n^{-1})$-spread distribution, while \cref{conj:optimistic} seems to require ideas beyond those in \cite{FKNP21}.

It is also natural to ask for the threshold of more general Steiner systems in random hypergraphs. Since iterative absorption can famously construct designs with arbitrary parameters, we expect that some of the ideas in this paper might extend to this setting. However, to highlight just one potential difficulty, the current argument hinges on the ability to find absorbers that are sufficiently sparse so as not to contribute negatively to the spread. It is unclear whether this is possible for designs with other parameters.

Finally, we wonder whether there exists an efficient algorithm to \textit{find} Steiner triple systems in $\mb{G}^{(3)}(n,p)$, with $p$ sufficiently large for them to exist w.h.p. This question is relevant to Shamir's problem too: while the sharp threshold for the existence of perfect matchings in $\mb{G}^{(3)}(n,p)$ is known, it is not known whether a perfect matching can be found algorithmically. We note that a slight modification of the proof given for \cref{thm:boosting} (though it is not stated this way) allows one to construct the measure on Steiner triple systems which is well-spread in an algorithmic fashion. The difficulty lies in the black-box application of \cite{FKNP21} to show that $\mb{G}^{(3)}(n,n^{-1+o(1)})$ therefore contains Steiner triple systems w.h.p.

\subsection{Organization}

The paper is organized as follows. At the end of this section we introduce some notation. \cref{sec:preliminaries} introduces basic concepts connected to triangle-decompositions and also collects useful probabilistic tools. In \cref{sec:bootstrapping} we lay out the bootstrapping technique that is the heart of our argument. In \cref{sec:IA} we adapt the framework of iterative absorption to the sparse random setting. Finally, as mentioned, in \cref{sec:latin-squares} we describe the modifications to our proof required to obtain \cref{thm:latin}.

\subsection{Notation}\label{sub:notation}

For a graph $G$ we write $V(G)$ for its vertex set, and $G^c$ for its complement within that set. For $v \in V(G)$ we write $N_G(v)$ for its set of neighbors in $G$. If $X\subseteq V(G)$ then $G[X]$ is the induced subgraph on vertex set $X$.

If $\mc{H}$ is a $3$-uniform hypergraph, we may refer to its $3$-edges as triangles. We denote by $e(\mc H)$ the number of triangles in $\mc H$, and we denote by $E(\mc{H})$ the graph of edges that are contained in a triangle of $\mc{H}$.

\section{Preliminaries}\label{sec:preliminaries}

We remind the reader of the following definitions.

\begin{definition}\label{def:triangle-divisible}
A graph $G$ is \textit{triangle-divisible} if every vertex degree is even and the number of edges is a multiple of $3$. A \textit{triangle-decomposition} of a graph $G$ is a collection of triangles in $G$ such that every edge of $G$ is contained in exactly one triangle.
\end{definition}
\begin{remark}
It is easy to see that triangle-divisibility is a necessary but insufficient condition for $G$ to admit a triangle decomposition. The main result of \cite{Kee18} (and also \cite{BGKLMO20}) is that if $G$ is sufficiently large, dense, and typical (pseudorandom in an appropriate sense) then triangle-divisibility is sufficient for $G$ to admit a triangle-decomposition.
\end{remark}

We will repeatedly use the Chernoff bound for binomial and hypergeometric distributions (see for example \cite[Theorems~2.1 and~2.10]{JLR00}) without further comment.
\begin{lemma}[Chernoff bound]\label{lem:chernoff}
Let $X$ be either:
\begin{itemize}
    \item a sum of independent random variables, each of which take values in $\{0,1\}$, or
    \item hypergeometrically distributed (with any parameters).
\end{itemize}
Then for any $\delta>0$ we have
\[\mb{P}[X\le (1-\delta)\mb{E}X]\le\exp(-\delta^2\mb{E}X/2),\qquad\mb{P}[X\ge (1+\delta)\mb{E}X]\le\exp(-\delta^2\mb{E}X/(2+\delta)).\]
\end{lemma}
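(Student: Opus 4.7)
The plan is to establish both tail bounds by the classical exponential moment (Chernoff--Bernstein) method, and to reduce the hypergeometric case to the Bernoulli one by Hoeffding's convex-domination argument.

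First I would handle the independent Bernoulli case. Write $X = \sum_i X_i$ with $\mb{E} X_i = p_i$ and $\mu := \mb{E} X$. The elementary inequality $1 - p + p e^t \le \exp(p(e^t - 1))$ yields, by independence, $\mb{E} e^{tX} \le \exp(\mu(e^t - 1))$ for every $t \in \mb{R}$. Combining with Markov's inequality $\mb{P}[X \ge a] \le e^{-ta}\mb{E} e^{tX}$ and optimizing by setting $t = \log(1+\delta)$ for the upper tail and $t = \log(1-\delta)$ for the lower tail, one obtains the sharp form
\[\mb{P}[X \ge (1+\delta)\mu] \le \left(\frac{e^\delta}{(1+\delta)^{1+\delta}}\right)^{\!\mu}, \qquad \mb{P}[X \le (1-\delta)\mu] \le \left(\frac{e^{-\delta}}{(1-\delta)^{1-\delta}}\right)^{\!\mu}.\]
The claimed weaker forms then follow from the standard calculus estimates $(1+\delta)\log(1+\delta) - \delta \ge \delta^2/(2+\delta)$ for $\delta > 0$ and $(1-\delta)\log(1-\delta) + \delta \ge \delta^2/2$ for $\delta \in (0,1)$.

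For the hypergeometric case I would invoke Hoeffding's comparison theorem: if $X$ counts successes in $n$ draws without replacement from a population of size $N$ containing $K$ successes, then for every convex $\varphi$ one has $\mb{E}\varphi(X) \le \mb{E}\varphi(Y)$, where $Y \sim \mathrm{Binomial}(n,K/N)$. Applied to $\varphi(x) = e^{tx}$, this yields $\mb{E} e^{tX} \le \mb{E} e^{tY}$, so that the argument of the previous paragraph transfers verbatim, with the same $\mu = \mb{E} X = nK/N$. (Alternatively one can exploit the fact that the indicators defining a hypergeometric sample are negatively associated and apply a Janson-type extension of the Chernoff inequality.)

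The only genuinely non-trivial ingredient is Hoeffding's convex-domination inequality; apart from that everything reduces to Markov's inequality, independence, and a short calculus estimate. Since the lemma is entirely classical and the authors simply quote it from \cite{JLR00}, I would not reproduce the short proof in the paper itself.
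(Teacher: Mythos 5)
The paper does not prove this lemma; it is quoted directly from \cite[Theorems~2.1 and~2.10]{JLR00}, as you note at the end of your proposal. Your argument is the standard textbook proof (exponential-moment/Markov optimization for independent Bernoulli sums, plus Hoeffding's convex-domination to transfer the MGF bound to the hypergeometric case), it is correct, and it matches the proof given in that reference.
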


Next we will require that if a sequence of random variables is stochastically dominated by a sequence of Bernoulli random variables it satisfies an identical set of tail bounds. 
\begin{lemma}[{\cite[Lemma~8]{Ram90}}]\label{lem:binomial-comparison}
Let $X_1,\ldots, X_n$ be $\{0,1\}$-valued random variables such that for all $i\in [n]$, we have that $\mb{P}[X_i = 1|X_1,\ldots, X_{i-1}]\le p$ then $\mb{P}[\sum_{i\in [n]}X_i\ge t]\le \mb{P}[\mr{Bin}(n,p)\ge t]$ for all $t\ge 0$.
\end{lemma}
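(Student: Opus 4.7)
The plan is to establish the domination by an explicit monotone coupling: I would construct, on a single probability space, a copy $(X_1',\ldots,X_n')$ of the process $(X_1,\ldots,X_n)$ together with i.i.d.\ Bernoulli$(p)$ variables $(Y_1,\ldots,Y_n)$ such that $X_i' \le Y_i$ almost surely for every $i$. Given this, $\sum_i X_i' \le \sum_i Y_i$ holds pointwise, and since $(X_1',\ldots,X_n')$ and $(X_1,\ldots,X_n)$ have the same joint distribution while $\sum_i Y_i \sim \mr{Bin}(n,p)$, the conclusion
\[\mb{P}\Big[\sum_{i\in[n]} X_i \ge t\Big] = \mb{P}\Big[\sum_{i\in[n]} X_i' \ge t\Big] \le \mb{P}\Big[\sum_{i\in[n]} Y_i \ge t\Big] = \mb{P}[\mr{Bin}(n,p)\ge t]\]
follows immediately for every $t\ge 0$.

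To build the coupling, I would let $U_1,\ldots,U_n$ be i.i.d.\ uniform on $[0,1]$ and set $Y_i := \mathbbm{1}[U_i \le p]$, so the $Y_i$ are i.i.d.\ Bernoulli$(p)$ as required. I would then define $X_i'$ recursively: having already defined $X_1',\ldots,X_{i-1}'$, write $q_i(x_1,\ldots,x_{i-1}) := \mb{P}[X_i = 1 \mid X_1 = x_1, \ldots, X_{i-1} = x_{i-1}]$ and set $X_i' := \mathbbm{1}[U_i \le q_i(X_1',\ldots,X_{i-1}')]$. Because the hypothesis gives $q_i(\cdot) \le p$, the event $\{U_i \le q_i\}$ is contained in $\{U_i \le p\}$, yielding $X_i' \le Y_i$ pointwise. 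A short induction on $i$ checks that the conditional law of $X_i'$ given $(X_1',\ldots,X_{i-1}')$ matches that of $X_i$ given $(X_1,\ldots,X_{i-1})$, so the two processes are equal in distribution.

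Honestly, there is no real obstacle here: this is a standard sequential-coupling argument and the only mild subtlety is that the conditional probabilities $q_i$ must be chosen as genuine (measurable) functions of the previous coordinates. Since each $X_j$ takes only finitely many values, the regular conditional distribution exists trivially and $q_i$ can be defined as an ordinary function on $\{0,1\}^{i-1}$, so this point requires no measure-theoretic care. If one prefers to avoid coupling, the same result can be obtained by a direct induction on $n$, bounding the tail of $\sum_{i\le n} X_i$ by conditioning on $X_n$ and applying the inductive hypothesis to $X_1,\ldots,X_{n-1}$, but the coupling proof is cleaner.
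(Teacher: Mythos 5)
The paper does not prove this lemma; it imports it by citation to \cite{Ram90}, so there is no in-paper proof to compare against. Your coupling argument is correct and is the standard way to establish this kind of stochastic domination: define $Y_i := \mathbbm{1}[U_i \le p]$ and $X_i' := \mathbbm{1}[U_i \le q_i(X_1',\ldots,X_{i-1}')]$ from common uniforms, observe $X_i' \le Y_i$ pointwise because $q_i \le p$, and verify by induction that $(X_i')$ has the same law as $(X_i)$. The alternative induction-on-$n$ route you sketch (condition on $X_n$ and invoke the $(n-1)$-variable case) also works and is essentially what one finds in references of that era; the coupling version is cleaner and makes the monotonicity transparent. Your remark that measurability of $q_i$ is automatic here because each $X_j$ is $\{0,1\}$-valued is the right observation and correctly dispenses with the only potential subtlety.
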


Finally we will need the symmetric form of the Lov\'asz Local Lemma. 
\begin{lemma}[{\cite[Corollary~5.1.2]{AS16}}]\label{lem:lll}
Let $A_1,A_2,\ldots,A_n$ be events in a probability space such that each $A_i$ is mutually independent of all but $d$ other events. If $\mb{P}[A_i]\le p$ for every $i\in [n]$ and $ep(d+1)\le1$ then $\mb{P}[\bigwedge_{i\in[n]}A_i^c]>0$.
\end{lemma}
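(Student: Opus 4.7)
The plan is to deduce the symmetric form from a stronger asymmetric statement. Choose reals $x_1,\dots,x_n\in[0,1)$ (with $x_i=1/(d+1)$ in our symmetric setting) and aim to prove the asymmetric LLL: if $\mb{P}[A_i]\le x_i\prod_{j\in N(i)}(1-x_j)$, where $N(i)$ is the set of indices of events on which $A_i$ is \emph{not} mutually independent, then $\mb{P}[\bigwedge_i A_i^c]\ge\prod_i(1-x_i)>0$. Our symmetric hypothesis $ep(d+1)\le 1$ feeds into this because $(1-\tfrac{1}{d+1})^d\ge 1/e$, so $x_i\prod_{j\in N(i)}(1-x_j)\ge\tfrac{1}{d+1}\cdot\tfrac{1}{e}\ge p\ge \mb{P}[A_i]$, verifying the asymmetric hypothesis.

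The heart of the argument is the following inductive claim: for every $i\in[n]$ and every $S\subseteq[n]\setminus\{i\}$,
\[\mb{P}\bigl[A_i\,\big|\,{\textstyle\bigwedge_{j\in S}}A_j^c\bigr]\le x_i.\]
I would prove this by induction on $|S|$. The base case $S=\emptyset$ is the hypothesis. For the inductive step, partition $S=S_1\cup S_2$ where $S_1=S\cap N(i)$ and $S_2=S\setminus N(i)$, and rewrite
\[\mb{P}\bigl[A_i\,\big|\,{\textstyle\bigwedge_{j\in S}}A_j^c\bigr]=\frac{\mb{P}\bigl[A_i\cap\bigwedge_{j\in S_1}A_j^c\,\big|\,\bigwedge_{j\in S_2}A_j^c\bigr]}{\mb{P}\bigl[\bigwedge_{j\in S_1}A_j^c\,\big|\,\bigwedge_{j\in S_2}A_j^c\bigr]}.\]
For the numerator, drop the $\bigwedge_{j\in S_1}A_j^c$ factor and use mutual independence of $A_i$ from the events indexed by $S_2$ to bound it by $\mb{P}[A_i]\le x_i\prod_{j\in N(i)}(1-x_j)$. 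For the denominator, apply the chain rule inside $S_1$, conditioning on $\bigwedge_{j\in S_2}A_j^c$ throughout, and apply the inductive hypothesis (each conditioning set has size strictly less than $|S|$) to obtain a lower bound of $\prod_{j\in S_1}(1-x_j)\ge\prod_{j\in N(i)}(1-x_j)$. Dividing yields the desired bound $x_i$.

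Given the inductive claim, the conclusion follows from one more application of the chain rule:
\[\mb{P}\bigl[{\textstyle\bigwedge_{i=1}^n}A_i^c\bigr]=\prod_{i=1}^n\mb{P}\bigl[A_i^c\,\big|\,{\textstyle\bigwedge_{j<i}}A_j^c\bigr]\ge\prod_{i=1}^n(1-x_i)>0,\]
which is strictly positive since each $x_i<1$.

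The main technical obstacle is the induction step, specifically the need to split $S$ according to the independence neighborhood of $A_i$ and to handle the degenerate case where the denominator could a priori vanish; the latter is avoided because the inductive hypothesis forces each conditional $\mb{P}[A_j^c|\cdots]\ge 1-x_j>0$, so every conditioning event encountered has positive probability. Beyond this care, the proof is purely formal manipulation of conditional probabilities together with the elementary inequality $(1-1/(d+1))^d\ge 1/e$ used to convert the symmetric hypothesis into the asymmetric one.
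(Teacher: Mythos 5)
Your proof is correct and is essentially the standard textbook argument: derive the symmetric Lov\'asz Local Lemma from the asymmetric (general) form by setting $x_i = 1/(d+1)$ and using $(1-1/(d+1))^d \ge 1/e$, together with the usual inductive claim that $\mb{P}[A_i \mid \bigwedge_{j\in S} A_j^c] \le x_i$. The paper does not prove this lemma but cites it directly from Alon--Spencer (Corollary~5.1.2), whose proof proceeds exactly as you describe, so there is nothing further to compare.
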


\section{Bootstrapping with Spread Families}\label{sec:bootstrapping}

In order to prove \cref{thm:main}, we will iteratively prove the following result for all $\eta\in(0,1]$. We fix a constant $C_{\ref{thm:boosting}} > 0$, large enough for various inequalities we encounter later to hold.

\begin{theorem}[{Theorem($\eta$)}]\label{thm:boosting}
Fix a triangle-divisible graph $G$ on $n$ vertices with $\Delta(G^{c})\le n/\log n$ and $n\ge \exp(C_{\ref{thm:boosting}}/\eta^4)$. Let $\mc{H}\sim \mb{G}^{(3)} (n,n^{\eta}/n)$. With probability at least $1/2$ the collection $\mc{H}$ contains a triangle-decomposition of $G$.
\end{theorem}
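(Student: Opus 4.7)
The plan is to prove \cref{thm:boosting}($\eta$) by induction along the sequence $\eta_k = 2/(2+k)$, which decreases to $0$. Since $\mb{G}^{(3)}(n,p)$ is stochastically monotone in $p$, verifying the theorem at the discrete values $\eta=\eta_k$ already implies it for every $\eta\in(0,1]$. The base case $\eta_0 = 1$ says that the complete $3$-uniform hypergraph $\mb{G}^{(3)}(n,1)$ contains a triangle-decomposition of $G$, which under the hypothesis $\Delta(G^c)\le n/\log n$ is exactly the existence theorem of Keevash~\cite{Kee18} (reproved by iterative absorption in~\cite{BGKLMO20}).

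For the inductive step, assume \cref{thm:boosting}($\eta'$) for $\eta'=\eta_{k-1}$ and aim to deduce \cref{thm:boosting}($\eta$) for $\eta=\eta_k$. Applying \cref{thm:FKNP} with ground set $Z$ equal to all triangles in $K_n$ (so $\log|Z|=O(\log n)$), it suffices to construct a probability measure $\mu$ on triangle-decompositions of $G$ that is $q$-spread for some $q=O(n^{\eta-1}/\log n)$. I would build $\mu$ via a three-stage randomized construction: (i) fix a reservoir vertex set $X\subseteq V(G)$ of size $\approx n^{1-\eta/2}$; (ii) sample a random absorber triangle-set $\mc{A}\subseteq\binom{V(G)}{3}$ whose density mimics $\mb{G}^{(3)}(n,\Theta(1/n))$ and whose geometry ensures that every triangle $T$ with $V(T)\subseteq X$ admits an abundant supply of swap configurations $\mc{A}'\subseteq\mc{A}$ for which $\mc{A}'\cup\{T\}$ admits a second triangle-decomposition of its underlying graph avoiding $T$; (iii) run the iterative-absorption process of \cref{sec:IA} on $G\setminus(E(\mc{A})\cup G[X])$ to produce a triangle set $\mc{S}$ that covers every edge of $G$ outside $X\cup E(\mc{A})$ and whose intrinsic spread is $\approx n^{\eta-1}$, inherited from the $1/n$-scale sparsification by $\mc{A}$ together with the uniform choices of the process.

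The leftover graph $L$ of uncovered edges then lives inside $X$ and is arranged, by parity bookkeeping on $\mc{A}$ and $\mc{S}$, to be near-complete and triangle-divisible. The size of $X$ is chosen large enough that $|X|\ge\exp(C_{\ref{thm:boosting}}/(\eta')^4)$, which is consistent with the hypothesis on $n$ because $\eta'>\eta$; the inductive call \cref{thm:boosting}($\eta'$) applied on $L$ delivers a random triangle-decomposition $\mc{L}$ from an $|X|^{\eta'-1}$-spread distribution. For each $T\in\mc{L}$ I would then randomly select a swap configuration $\mc{A}_T\subseteq\mc{A}$, with the choices made mutually disjoint across different $T\in\mc{L}$ either by a greedy procedure or via \cref{lem:lll}, and execute the swap so as to replace every $T\in\mc{L}$ by triangles whose vertex sets are not contained in $X$. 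Writing $\mc{L}'$ for the replacement triangle-set, the output $\mc{S}\cup\mc{L}'$ is a triangle-decomposition of $G$, and $\mu$ is declared to be the law of this random decomposition.

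The main obstacle, and the technical heart of the argument, is verifying that $\mu$ is $O(n^{\eta-1}/\log n)$-spread, since this requires the three randomized stages to interact cleanly. For a query set $S$ of triangles, one must bound $\mu(\{D:S\subseteq D\})\le q^{|S|}$ uniformly in $S$. Triangles entering through $\mc{S}$ contribute spread $\approx n^{\eta-1}$ per triangle, directly from the $1/n$-scale sparsification by $\mc{A}$ and the uniform choices of the iterative-absorption step. Triangles entering through $\mc{L}'$ start life as the inductive $|X|^{\eta'-1}$-spread triangles of $\mc{L}$ but are diluted by the number of swap realizations that could have produced each such triangle; the design choices $|X|=n^{1-\eta/2}$ and $\eta_k=2/(2+k)$ are calibrated so that this combined contribution is also at most $n^{\eta-1}$ up to polylogarithmic factors. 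Beyond this per-triangle accounting, one must argue that the three random stages can be coupled without introducing cross-dependencies that violate the product bound $q^{|S|}$; it is this joint multi-scale spread analysis, rather than any single step, where the bulk of the work will lie.
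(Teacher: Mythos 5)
Your proposal reconstructs the paper's high-level architecture essentially verbatim: induction along $\eta_k = 2/(k+2)$ with base case from Keevash/BGKLMO, the reduction via \cref{thm:FKNP} to building an $O(n^{\eta-1}/\log n)$-spread measure on triangle-decompositions, the three-stage randomized construction (reservoir $X$, sparse random absorber, iterative absorption leaving a leftover in $X$), the inductive call to decompose the leftover, and the disjoint-swap step using the absorber to push the probability mass off $X$. The paper's $|X|$ is $n^{1-\gamma_k}\exp((\log n)^{1/3})$ with $\gamma_k = 4/(2k+5)$ rather than your $n^{1-\eta_k/2}$, but that discrepancy is immaterial; what matters is that the exponent of $|X|$ interpolates between the exponents needed for the IA-spread and flip-spread contributions, and both choices can be made to work.

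That said, the proposal stops short of being a proof, and you candidly say as much. The pieces you wave at under ``swap configurations,'' ``abundant supply,'' and ``joint multi-scale spread analysis'' are precisely where the technical content lives, and none of it is supplied or even made precise. Specifically: (i) you do not exhibit a concrete absorber gadget with the needed exchange property — the paper's $\mathcal{F}_{2m}$ / $\mathcal{F}_{2m}^\ast$ bipyramid-chain pair, where the flip shares the same $2$-skeleton but replaces the $X$-triangle by $2m-1$ triangles leaving $X$, is essential and nontrivial to design so that it (a) appears abundantly in a random triangle set at density $\Theta(1/n)$ (the second-moment argument of \cref{lem:hypergraph-comp}) and (b) has the Erd\H{o}s-configuration counting property that keeps pairwise conflicts rare (\cref{lem:hypergraph-comp-2}); (ii) you gesture at ``greedy or via \cref{lem:lll}'' for making the $\{\mathcal{A}_T\}$ disjoint, but making the LLL work requires the degree bound on the conflict graph that comes out of the two lemmas above plus a concentration step over the random $\mathcal{H}_{\mathrm{Rand}}$ — this is the content of \cref{Tem1,Tem2,Pack1} and is not routine; (iii) most importantly, the verification that the flipped output is spread — the polymer decomposition in \cref{clm:spread} that carefully attributes each queried triangle to one of $\mathcal{H}_{\mathrm{Dec}}$, $\bigcup_i\mathcal{H}_i'$, or some absorber-flip, and then bounds the count of rooted polymers against $(2\ell_2)^{v-e-2}$ — is the heart of the theorem and is entirely absent. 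Your ``per-triangle accounting'' remark elides the real difficulty, which is that a single queried triangle in $\mathcal{H}_{\mathrm{Flip}}$ reveals information about a whole absorber and hence about up to $2m-1$ other triangles, so the naive product bound does not hold and one has to amortize over polymers. In short: the plan is the right plan, but what you've written is the introduction to the argument, not the argument.
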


Let $\eta' = c(\log n)^{-1/4}$, with $c>0$ a large constant. Note that \cref{thm:boosting}$(\eta')$ implies \cref{thm:main}. Indeed, assuming \cref{thm:boosting}$(\eta')$, if $\mc H_1,\mc H_2,\ldots,\mc H_m$ are (say) $m=\log n$ independent samples of $\mb{G}^{(3)}(n,n^{\eta'} / n)$ then w.h.p.\ at least one of them contains a Steiner triple system. On the other hand $\bigcup_{i=1}^m \mc H_i$ is distributed as $\mb{G}^{(3)}(n,p)$ with $p = \exp(O((\log n)^{3/4})) /n$.

We note that \cref{thm:boosting}$(\eta = 1)$ was proved by Gustavsson \cite{Gus91} and also follows immediately from the results of Keevash \cite[Theorem 2.1]{Kee18} or Barber, K\"uhn, Lo, and Osthus \cite[Theorem 1.2]{BKLO16}. This will serve as the base case for our results, and we will inductively show that the result is true for smaller and smaller $\eta$.

We will need the following result, which can be thought of as stating that a triangle-decomposition iterative absorption scheme that attempts to cover $G\subseteq K_n$ and ultimately has a leftover contained in a smaller set $X \subseteq V(K_n)$ can be performed using only a $|X|^{-1+o(1)}$-fraction of triangles of $G$. (This is essentially the limit for a pure iterative absorption framework as in \cite{GKLO16,BGKLMO20}.) We defer its proof to \cref{sec:IA}. The remaining argument is independent of its justification.

\begin{proposition}\label{prop:IA-random-set}
There exists a constant $C = C_{\ref{prop:IA-random-set}} > 0$ such that the following holds. Let $n \in \mb N$. Fix a subset $X$ of $V(K_n)$ such that $|X| \in [C, n/(\log n)^3]$. Furthermore fix a triangle-divisible graph $G \subseteq K_n$ such that $\Delta(G^c)\le n/\log n + n/(\log n)^2$ and $|X \setminus N_G(v)| \le |X|(1/\log |X|-1/(\log |X|)^2)$ for all $v\in V(G)$. Given a sample $\mc H' \sim \mb{G}^{(3)}(n,(\log |X|)^C/|X|))$, with probability at least $3/4$ there exists an edge-disjoint triangle set $\mc H \subseteq \mc H'$ such that $G^{\ast} \coloneqq E(\mc H)$ satisfies:
\begin{enumerate}
    \item $G\setminus G[X]\subseteq G^{\ast}$ (i.e., $\mc H$ covers all edges of $G$ outside of $X$),
    
    \item $G^{\ast}\subseteq G$ (i.e., $\mc H$ consists of triangles in $G$), and
    
    \item $\Delta((G^c\cup G^{\ast})[X])\le |X|/\log |X|$ (the graph of uncovered edges in $G[X]$ is nearly complete).
\end{enumerate}
\end{proposition}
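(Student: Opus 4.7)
The plan is to adapt the iterative absorption (``vortex'') scheme of \cite{BGKLMO20} to the sparse random setting supplied by $\mc H'$. Fix a geometrically decreasing chain $V(G)=U_0\supseteq U_1\supseteq\cdots\supseteq U_\ell=X$ with $|U_{i+1}|\approx|U_i|/2$, so that $\ell=O(\log(n/|X|))$. For each $i<\ell$ choose $U_i$ as a uniformly random subset of $V(G)$ of the prescribed size, independently of $\mc H'$; Chernoff bounds (\cref{lem:chernoff}) then show that with probability $1-o(1)$ each $G[U_i]$ inherits a near-complete degree condition of the form $\Delta(G[U_i]^c)\le|U_i|/\log|U_i|$, and that the restrictions of $\mc H'$ to triangles with prescribed intersection patterns with the chain behave like sparse random hypergraphs of the expected densities.

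Next, process the vortex top-down, maintaining as invariants at the start of stage $i$ that the triangles selected so far lie in $G$, are edge-disjoint, that their union covers every edge of $G$ not contained in $G[U_i]$, and that the uncovered subgraph $G_i\subseteq G[U_i]$ satisfies a near-complete degree bound of the form $\Delta(G_i^c)\le|U_i|/\log|U_i|-|U_i|/(\log|U_i|)^2$ within $U_i$. In stage $i$, run a \emph{cover-down} procedure which, using only triangles drawn from the appropriate portion of $\mc H'$ (those with a controlled intersection pattern with $U_{i+1}$ and whose edges lie in $G$), covers every edge of $G_i$ incident to $U_i\setminus U_{i+1}$ and delivers an uncovered subgraph $G_{i+1}\subseteq G[U_{i+1}]$ still satisfying the invariant one level down.

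Each cover-down is implemented by a sparse R\"odl-nibble / triangle-removal process: in each round activate an independent binomial subset of the still-available triangles in $\mc H'$ and keep those whose edges remain uncovered and whose selection respects edge-disjointness. Because $\mc H'$ has density $(\log|X|)^C/|X|$, for every edge $uv$ that still needs covering the expected number of candidate completing triangles inside $\mc H'$ at the relevant stage is of order $(\log|X|)^C|U_{i+1}|/|X|$, which is super-polylogarithmically large. A Chernoff plus \cref{lem:binomial-comparison} analysis of the random choices, together with tracking of maximum codegrees in the residual graph and in the residual usable portion of $\mc H'$, shows that after $\mr{poly}\log(|X|)$ nibble rounds the leftover uncovered edges incident to $U_i\setminus U_{i+1}$ form a graph of sufficiently small maximum degree; a final clean-up, via a Lov\'asz Local Lemma argument (\cref{lem:lll}) applied to a bounded-choice greedy completion, then finishes stage $i$ with failure probability $o(1/\ell)$.

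The main obstacle is the quantitative bookkeeping: maintaining the degree and codegree pseudorandom invariants through $\ell=O(\log n)$ stages, while ensuring that triangles selected in different stages remain edge-disjoint. The former is handled by the random vortex choice combined with standard concentration (so that the invariants hold simultaneously for every level of the chain and for every relevant slice of $\mc H'$); the latter follows from the fact that distinct stages act on disjoint edge-classes of $G$ (namely the edges incident to $U_i\setminus U_{i+1}$ that are still uncovered at the start of stage $i$), so no edge of $G$ is ever targeted twice. Summing the $o(1/\ell)$ per-stage failure probabilities yields an overall success probability of at least $3/4$, establishing the proposition.
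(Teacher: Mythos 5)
Your proposal correctly identifies the high-level framework (vortex $U_0\supseteq\cdots\supseteq U_\ell=X$, applied iteratively, with the triangles restricted to $\mc H'$), which is indeed how the paper argues. The differences in vortex step (geometric vs.\ the paper's $t_{i+1}\approx t_i/(\log t_i)^3$) are not significant. However, the cover-down procedure within a single stage is where the real work lies, and your "nibble, then bounded-choice greedy plus LLL" outline has genuine gaps.

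The central issue is the \emph{crossing} edges, i.e.\ uncovered edges $uv$ with $u\in U_i\setminus U_{i+1}$ and $v\in U_{i+1}$. For each fixed $u$, these edges must \emph{all} be covered, and any completing triangle $\{u,v,w\}$ with $w\in U_{i+1}$ covers \emph{two} crossing edges from $u$ (namely $uv$ and $uw$) plus one edge of $G[U_{i+1}]$. Thus covering $u$'s crossing edges is exactly a perfect matching problem on the uncovered crossing neighborhood of $u$, subject to the global constraint that the $vw$ edges chosen for different $u$'s are pairwise distinct, that these triangles lie in $\mc H'$, and that $\deg(u)$ is even at that moment. A greedy assignment of a completing triangle to each uncovered edge, cleaned up by the local lemma, does not produce a perfect matching on $u$'s link: it would over-cover some crossing edges from $u$ and leave others untouched. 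The paper handles this with a dedicated sequential bipartite matching argument (its \cref{lem:run-match}, built on \cref{lem:graph-subsample} via a Hall-type criterion \cref{lem:hall}), processing vertices $u\notin V_1$ one at a time and showing the residual link graph still has a perfect matching.

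A related omission is parity bookkeeping. For the per-vertex matching to exist at all, the number of uncovered crossing edges from each $u\in U_i\setminus U_{i+1}$ must be even at the moment it is processed. The paper engineers this by (a) requiring $\deg_G(v)\equiv 0\pmod 2$ for $v\notin V_1$ as an explicit hypothesis of \cref{lem:IA-iteration-simplified}, and (b) structuring the internal cover-down (its \cref{lem:cover-down-1}) so that every triangle used has exactly one internal edge at $u$, which preserves the needed parity for crossing edges. Your proposal doesn't track parity and a generic nibble/greedy cleanup would not preserve it.

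Finally, the paper sets aside a random reservoir $R$ of crossing edges \emph{before} running the nibble (its conditions A1--A5, B1--B5), precisely so that the internal cover-down has enough pristine common crossing neighbors for each pair of external vertices, and so that the final matching stage has a pseudorandom bipartite graph to work in. Without a reservoir, the nibble could deplete crossing edges non-uniformly, and your claimed degree/codegree invariants at the start of each cover-down would not be guaranteed. In short, the vortex skeleton is right, but the heart of the proof --- reservoir, internal cover-down using the reservoir, and a sequential link-graph perfect matching for crossing edges with explicit parity control --- is missing and cannot be replaced by a one-shot greedy plus LLL argument.
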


\begin{figure}[t]
	\includegraphics[width=.8\textwidth]{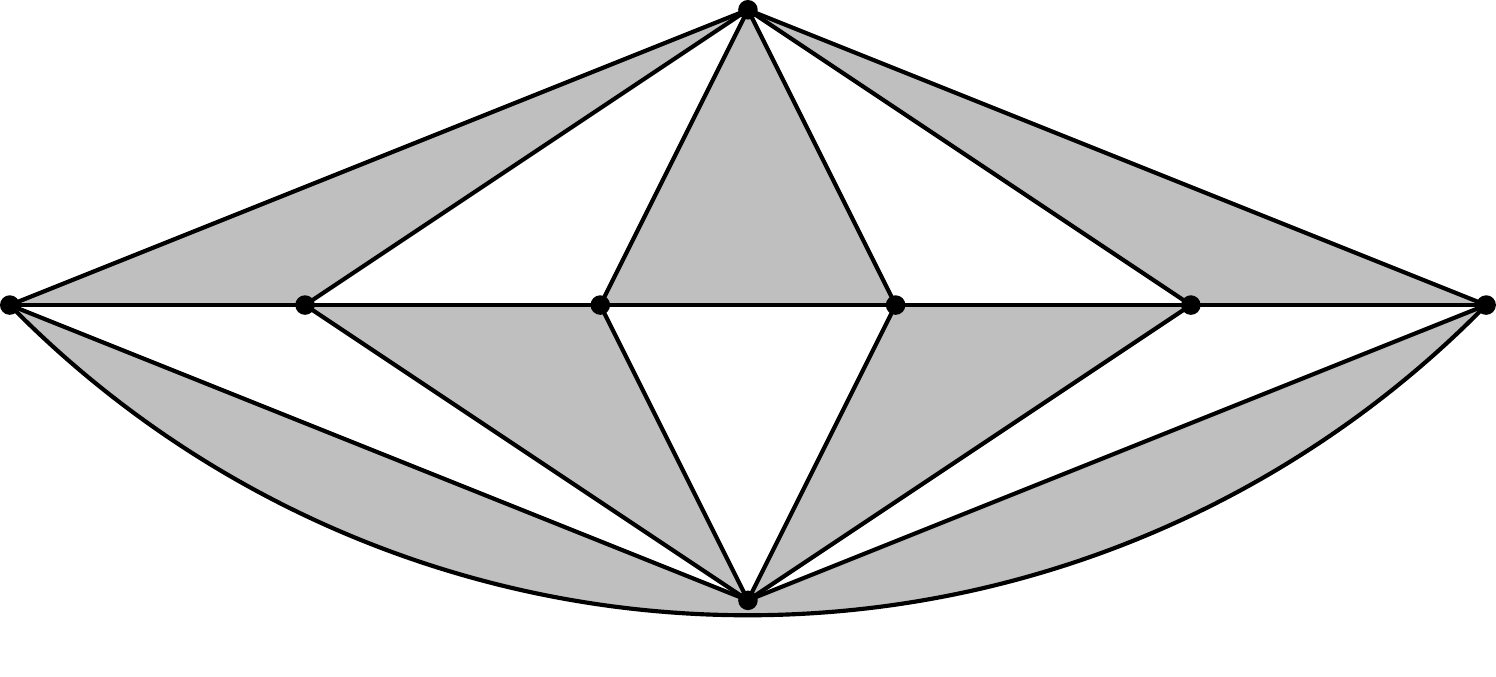}
	\caption{The absorber $\mc{F}_6$ consists of the shaded triangles, while the unshaded triangles (including the external triangle) comprise its absorber flip $\mc{F}_6^\ast$.}\label{fig:F_6}
\end{figure}

As outlined in the introduction, we describe a randomized construction of Steiner triple systems. It uses ``probabilistic absorbers'', whose role is to ``spread out'' the distribution of triangles that are too highly concentrated on a small vertex set. The absorbers are copies of the $3$-uniform hypergraph $\mc{F}_{2m}$, defined as follows: Consider a cycle $C_{2m}$ and add two vertices that are each connected with edges to the $2m$ vertices in the cycle. The resulting graph naturally has a $2$-colorable triangulation. Let $\mc{F}_{2m}$ be one of the color classes. Define its ``absorber-flip'' $\mc{F}_{2m}^\ast$ as the hypergraph on the same vertex set but with the opposite color class of triangles. For an illustration, see \cref{fig:F_6}.

The next lemma allows us to find such absorbers in random linear hypergraphs. Crucially, it requires only that $\mc{F}_{2m}$ be present. It uses the following construction, akin to the R\"odl nibble: Let $G$ be a graph with a distinguished vertex set $X \subseteq V(G)$ and let $p \in [0,1]$. Let $\mb{G}_{\ast}^{(3)}(G,X,p)$ be the distribution on pairs of triangle sets $(\mc{H},\mc{H}')$ defined as follows: First, include each triangle of $G \setminus G[X]$ in $\mc{H}'$ with probability $p$, independently. Then, let $\mc{H} \subseteq \mc{H}'$ be the set of triangles that are edge-disjoint from all other triangles in $\mc{H}'$.

\begin{lemma}\label{lem:hypergraph-comp}
The following holds for a sufficiently large $C_{\ref{lem:hypergraph-comp}}>0$. Fix a graph $G$ on $n \ge C_{\ref{lem:hypergraph-comp}}$ vertices with $\Delta(G^c)\le n/(\log n)$, an integer $2\le m\le (\log n)^{3/4}$, a set $X \subseteq V(G)$ of size at most $n/2$, and a triangle $T$ in $G[X]$. Let $(\mc{H},\mc{H}') \sim \mb{G}_{\ast}^{(3)}(G,X,1/n)$. Then with probability at least $1/(4e)^{12m-6}$, there exists a collection $\mc{C}$ of $2m-1$ triangles in $\mc{H}$ such that $\mc{F} \coloneqq \mc{C}\cup\{T\}$ is a copy of $\mc{F}_{2m}$ with $V(\mc{F})\cap X = V(T)$.
\end{lemma}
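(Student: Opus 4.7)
The plan is to apply the second-moment method (via Paley--Zygmund) to the random variable $N$ counting the number of valid completions of $T$ into a copy of $\mathcal{F}_{2m}$ with the remaining $2m-1$ triangles lying in $\mathcal{H}$. Identify $T = \{v_1, v_2, a\}$ with a fixed triangle of $\mathcal{F}_{2m}$, so that $v_1, v_2$ are two consecutive cycle vertices and $a$ is one apex. Call an ordered tuple $(b, v_3, \ldots, v_{2m}) \in (V(G) \setminus X)^{2m-1}$ of distinct vertices a \emph{valid extension} if (i) all $6m - 3$ required new bipyramid edges lie in $G$, and (ii) the $2m - 1$ non-$T$ triangles $T_2, \ldots, T_{2m}$ of $\mathcal{F}_{2m}$ all belong to $\mathcal{H}$. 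The target is $\mathbb{P}[N \ge 1] \ge (4e)^{-(12m-6)}$.

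The first moment is direct. Using $\Delta(G^c) \le n/\log n$ and $|V(G) \setminus X| \ge n/2$, the number of tuples satisfying (i) is at least $(n/3)^{2m-1}$: at each of the $2m-1$ successive vertex choices, the forbidden set (non-neighbors of already-placed vertices and previously used vertices) has size $O(mn/\log n) = o(n)$ for $m \le (\log n)^{3/4}$. For each graph-valid tuple, the $2m-1$ non-$T$ triangles are pairwise edge-disjoint in $\mathcal{F}_{2m}$ (as $\mathcal{F}_{2m}$ is a single color class of a proper $2$-coloring of the bipyramid's triangulation), so they are distinct and each lies in $\mathcal{H}'$ independently with probability $1/n$. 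Conditional on all being in $\mathcal{H}'$, the event that no other triangle of $G \setminus G[X]$ sharing an edge with some $T_j$ is also in $\mathcal{H}'$ has probability at least $(1-1/n)^{3(2m-1)(n-2)} \ge e^{-3(2m-1)}$. Hence $\mathbb{E}[N] \ge (3e^3)^{-(2m-1)}$. For the second moment I would enumerate ordered pairs of tuples by the number $j$ of shared extra vertices and the number $s \le j$ of shared triangles, using the key rigidity of $\mathcal{F}_{2m}$ (any two of its triangles share at most one vertex, since they are edge-disjoint); this yields $\mathbb{E}[N^2] \le \mr{poly}(m)$, whereupon Paley--Zygmund gives $\mathbb{P}[N \ge 1] \ge \mathbb{E}[N]^2/\mathbb{E}[N^2] \ge (9e^6)^{-(2m-1)}/\mr{poly}(m) \ge (4e)^{-(12m-6)}$ for $n$ sufficiently large, using the generous gap $(4e)^6 \approx 10^6$ compared to $(3e^3)^2 = 9e^6$.

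The main technical obstacle is the second-moment bound: one must carefully sum over overlap patterns between pairs of tuples, confirming that the structural rigidity of $\mathcal{F}_{2m}$ keeps $\mathbb{E}[N^2]$ small. An appealing alternative would be to construct the extension sequentially, one vertex at a time, arguing a constant conditional success probability per step. However, this approach runs into a concrete topological obstruction: closing the cycle of $\mathcal{F}_{2m}$ forces at least one step to form two new triangles simultaneously, giving a per-step success probability of $O(1/n)$ in the worst case (indeed, a perfect matching between the $2m-1$ triangles and the $2m-1$ new vertices respecting the dependency order does not exist). The global second-moment argument sidesteps this by leveraging the many possible placements in parallel rather than committing to a sequential order.
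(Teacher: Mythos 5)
Your proposal takes the same overall route as the paper — a second-moment argument over copies of $\mathcal{F}_{2m}$ extending $T$ — and your first-moment computation essentially matches the paper's (up to the minor point that $(1-1/n)^{(6m-3)n}$ is slightly \emph{below} $e^{-(6m-3)}$; the paper uses the safe bound $(1.1e)^{-(6m-3)}$). The two proofs genuinely diverge at the second moment, and this is where your sketch has a real gap.

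You claim $\mathbb{E}[N^2]\le\mathrm{poly}(m)$ via overlap enumeration, justified by the fact that any two triangles of $\mathcal{F}_{2m}$ share at most one vertex. That property (linearity of the hypergraph $\mathcal{F}_{2m}$) is not strong enough to guarantee your crucial inequality $s\le j$ between shared triangles and shared extra vertices: linear $3$-uniform hypergraphs can have $s$ edges on far fewer than $s+2$ vertices (e.g.\ the Fano plane has $7$ edges on $7$ vertices). What is actually needed is the sharper \emph{Erd\H{o}s configuration property} of $\mathcal{F}_{2m}$: every nonempty set of $s$ triangles of $\mathcal{F}_{2m}$ spans at least $s+2$ vertices. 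The paper proves exactly this (in the proof of \cref{lem:hypergraph-comp-2}), and the argument uses the specific cycle structure of $\mathcal{F}_{2m}$, not mere linearity. Applied to $\mathcal{S}\cup\{T\}$, this gives at least $s$ extra vertices for $s$ shared non-$T$ triangles, which is what your enumeration requires. As written, your justification for the key inequality is incorrect.

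The paper's own second-moment argument is also substantively different and considerably slicker: it defines $Z$ as the number of (unlabeled) copies $\mathcal{F}\in\mathfrak{F}$ with $\mathcal{F}\setminus\{T\}\subseteq\mathcal{H}$, and then exploits that $\mathcal{H}$ is, by the definition of $\mathbb{G}_*^{(3)}$, a collection of pairwise edge-disjoint triangles. This edge-disjointness forces distinct copies of $\mathcal{F}_{2m}$ both appearing in $\mathcal{H}$ to be triangle-disjoint, so every ordered pair of distinct surviving copies accounts for $2(2m-1)$ distinct triangles, and $\mathbb{E}[Z(Z-1)]\le\bigl(3(2m-1)!\binom{n}{2m-1}\bigr)^2 n^{-2(2m-1)}\le 9$ with no case analysis at all. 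Your count over ordered tuples into $\mathcal{H}$ could be made to work, but only by importing the Erd\H{o}s-configuration bound and then carefully summing over overlap types (and the result is $\mathbb{E}[N^2]=O(1)$ after all the work, not merely $\mathrm{poly}(m)$, with all overlap terms $s\ge 1$ contributing $o(1)$); the paper's observation sidesteps this entirely. You should replace the appeal to ``two triangles share at most one vertex'' with the correct structural lemma — or, better, adopt the edge-disjointness-of-$\mathcal{H}$ observation, which makes the second moment nearly trivial.
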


\begin{proof}
Let $\mf{F}$ be the collection of copies $\mc{F}$ of $\mc{F}_{2m}$ using triangles from $G$ such that $V(\mc{F})\cap X = V(T)$. Let $Z$ be the number of copies $\mc{F} \in \mf{F}$ such that $\mc{F} \setminus \{T\} \subseteq \mc{H}$. We observe that
\begin{equation}\label{eq:absorber family}
|\mf F| \geq 3(2m-1)! \binom{n/2}{2m-1} \left( 1 - O \left( \frac{m}{\log n} \right) \right).
\end{equation}
This accounts for the three non-isomorphic ways to embed $T$ into $\mc{F}_{2m}$ and the number of ways to choose the remaining vertices. The factor of $1-O(m/\log n)$ accounts for the fact that not all edges of $K_n$ are present in $G$.

Let $\mc{F} \in \mf{F}$ and let $\mc{C} \coloneqq \mc{F} \setminus \{T\}$. We claim that
\begin{equation}\label{eq:absorber prob}
\mb{P} \left[ \mc{C} \subseteq \mc{H} \right] \geq \left( \frac{1}{2e^3n} \right)^{2m-1}.
\end{equation}
Indeed, by definition, $\mb{P} \left[ \mc{C} \subseteq \mc{H}' \right] = n^{-(2m-1)}$. Next, we bound $\mb{P} \left[ \mc{C} \subseteq \mc{H} | \mc{C} \subseteq \mc{H}' \right]$. Conditioning on $\mc{C} \subseteq \mc{H}'$, this is equal to the probability that the $2$-skeleton of $\mc{C}$ does not participate in any triangles of $\mc{H}'$ besides $\mc{C}$. There are at most $|E(\mc{C})| n = (6m-3)n$ such triangles, and so the probability that none of them are in $\mc{H}'$ is at least $\left( 1 - 1/n \right)^{(6m-3)n} > (1.1e)^{-(6m-3)}$. This proves \cref{eq:absorber prob}.

By linearity of expectation, \cref{eq:absorber family}, and \cref{eq:absorber prob} we have that
\begin{equation}\label{eq:absorber expectation}
\mb{E} Z \geq \left( \frac{1}{2e} \right)^{6m-3}.
\end{equation}

Now note that
\[
\mb{E} \left[ Z^2 \right] = \mb{E} Z + \mb{E}[Z(Z-1)] \le \mb{E} Z +  \bigg(3(2m-1)!\binom{n}{2m-1}\bigg)^2n^{-2(2m-1)}\le \mb{E} Z + 9.
\]
In calculating $\mb{E}[Z(Z-1)]$, we have used the fact that any pair of distinct configurations that appear in $\mc{H}$ do not share a triangle. Applying the second moment method we obtain
\[
\mb{P} [Z>0] \geq \frac{(\mb{E} Z)^2}{\mb{E} [Z^2]} \geq \frac{(\mb{E} Z)^2}{\mb{E} Z + 9} \stackrel{\text{\cref{eq:absorber expectation}}}{>} \left( \frac{1}{4e} \right)^{12m-6},
\]
completing the proof.
\end{proof}

The next lemma follows via a direct union-bound computation.

\begin{lemma}\label{lem:hypergraph-comp-2}
Fix a graph $G$ on $n$ vertices and a pair of distinct triangles $T_1,T_2$ in $G$, and sample each triangle in $G$ with probability $1/n$. Call this random hypergraph $\mc{H}$. Let $2\le m\le(\log n)^{3/4}$. Let $q$ be the probability that $\mc{H}\cup\{T_1,T_2\}$ contains two copies, $\mc{F}^1$ and $\mc{F}^2$, of $\mc{F}_{2m}$, with $T_i\in \mc{F}^i$ and $T_{3-i}\notin \mc{F}^i$ for $i\in\{1,2\}$, that share a triangle. There exists a constant $C_{\ref{lem:hypergraph-comp-2}}>0$ such that $q\le C_{\ref{lem:hypergraph-comp-2}} 2^{3m}\!/n^2$ if $T_1$ and $T_2$ are vertex-disjoint and $q\le C_{\ref{lem:hypergraph-comp-2}} 2^{3m}\!/n$ if $T_1$ and $T_2$ share exactly one vertex.
\end{lemma}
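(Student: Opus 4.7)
The plan is a direct union bound. I would enumerate ordered pairs $(\mc{F}^1, \mc{F}^2)$ of copies of $\mc{F}_{2m}$ satisfying the stated membership constraints and sharing at least one triangle $S$, weighting each pair by the probability that its non-$\{T_1, T_2\}$ triangles all appear in $\mc{H}$. A first observation is that the shared triangle $S$ can equal neither $T_1$ nor $T_2$: indeed $S = T_1$ would force $T_1 \in \mc{F}^2$, contradicting the hypothesis $T_1 \notin \mc{F}^2$. Hence $|(\mc{F}^1 \cup \mc{F}^2)\setminus \{T_1, T_2\}| \ge 4m - 3$, and since $\mc{H}$ samples each triangle independently with probability $1/n$, the probability all of these triangles lie in $\mc{H}$ is at most $n^{-(4m-3)}$.

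For the enumeration I use a simple structural fact about $\mc{F}_{2m}$: every edge of its underlying bipyramid graph is covered by exactly one triangle of the color class (so $\mc{F}_{2m}$ triangle-decomposes its underlying graph), which means any two triangles of $\mc{F}_{2m}$ share at most a single vertex. Consequently, the number of copies of $\mc{F}_{2m}$ in $K_n$ containing two specified distinct triangles $T, S$ with $|V(T) \cap V(S)| \in \{0, 1\}$ is at most $O(m^2) \cdot n^{(2m+2) - |V(T) \cup V(S)|}$: choose the roles of $T$ and $S$ among the $2m$ template triangles in $O(m^2)$ ways, match vertex labels consistently with the apex/cycle distinction in $O(1)$ ways, and freely embed the remaining template vertices.

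I would then sum over the shared triangle $S$, classified by its overlap pattern with $V(T_1)$ and $V(T_2)$. In Case 1 ($V(T_1)\cap V(T_2) = \emptyset$), set $k = |V(S)\cap V(T_1)|$ and $k' = |V(S)\cap V(T_2)|$, each lying in $\{0,1\}$; the number of valid $S$ with given $(k,k')$ is $O(n^{3-k-k'})$, and multiplying by the counts above for $\mc{F}^1 \supseteq \{T_1, S\}$ and $\mc{F}^2 \supseteq \{T_2, S\}$ and by the weight $n^{-(4m-3)}$ yields $q \le O(m^4)/n^2$, with the $k,k'$ exponents cancelling exactly. In Case 2 ($V(T_1)\cap V(T_2) = \{v_0\}$), I would further split on whether $v_0 \in V(S)$; when $v_0 \in V(S)$ the $S$-vertex count saves one factor of $n$ compared to the disjoint case (a common vertex that would otherwise cost $1/n$ is now forced), producing the dominant term $q \le O(m^4)/n$.

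Since $m^4 \le 2^{3m}$ for $m \ge 2$, both bounds fit inside $C_{\ref{lem:hypergraph-comp-2}}\cdot 2^{3m}/n^2$ and $C_{\ref{lem:hypergraph-comp-2}}\cdot 2^{3m}/n$ for a suitably large constant. The advertised description of the proof as a ``direct union-bound computation'' is accurate: the only real work is careful bookkeeping of the intersection patterns between $V(S)$ and $V(T_1)\cup V(T_2)$, with the exponentially loose target $2^{3m}$ comfortably absorbing the polynomial-in-$m$ factors coming from placement symmetries of $\mc{F}_{2m}$ and from overcounting pairs that happen to share more than one triangle.
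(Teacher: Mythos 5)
Your argument has a genuine gap at the probability estimate, and the gap is the one ingredient that actually does the work in the paper's proof. You assert that $|(\mc{F}^1\cup\mc{F}^2)\setminus\{T_1,T_2\}|\ge 4m-3$ and hence the per-pair probability weight is at most $n^{-(4m-3)}$, but the inequality goes the other way. Knowing only that the shared triangle $S$ differs from $T_1,T_2$ gives $|\mc{F}^1\cap\mc{F}^2|\ge 1$, which yields $|\mc{F}^1\cup\mc{F}^2|\le 4m-1$ and so $|(\mc{F}^1\cup\mc{F}^2)\setminus\{T_1,T_2\}|\le 4m-3$, with equality precisely when the two copies share exactly one triangle. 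If they share $t\ge 2$ triangles, the relevant probability is $n^{-(4m-t-2)}$, which is polynomially larger than what you wrote down. Your closing sentence frames pairs sharing several triangles as an ``overcounting'' nuisance that a polynomial-in-$m$ factor can absorb, but that is a misdiagnosis: the issue is that the probability weight you assign such pairs is an underestimate by a factor of $n^{t-1}$, not that you count them too many times.

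To close the gap one must show that when $|\mc{F}^1\cap\mc{F}^2|=t$, the number of such pairs decreases by a compensating factor of roughly $n^{t-1}$. This is exactly what the paper's proof supplies via the \emph{Erd\H{o}s-configuration} property of $\mc{F}_{2m}$ — any nonempty set of $t$ triangles of $\mc{F}_{2m}$ spans at least $t+2$ vertices — which it packages as the inequality $e(\mc{F}')\ge v(\mc{F}')-2$ for every hypergraph $\mc{F}'$ obtained by gluing two copies of $\mc{F}_{2m}$ along a repeated triangle. Your framework can be repaired by summing over the shared triangle \emph{set} $\mc{Q}=\mc{F}^1\cap\mc{F}^2$ rather than over a single shared triangle $S$, and using the Erd\H{o}s-configuration property both to lower-bound $v(\mc{Q})$ and to upper-bound the number of completions $\mc{F}^i\supseteq\{T_i\}\cup\mc{Q}$; but at that point you are reproving the paper's key inequality. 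The parts of your write-up that are sound — the linearity of $\mc{F}_{2m}$, the count of copies through two prescribed triangles, the case split on $|V(T_1)\cap V(T_2)|$, and the observation that the target $2^{3m}$ comfortably absorbs $\mathrm{poly}(m)$ factors — are all correct for the $t=1$ pairs; the $t\ge 2$ case is what is missing.
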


\begin{proof}
Consider two copies of $\mc{F}_{2m}$ and all possible ways to identify vertices of one copy to vertices of the other (if there is a repeated triangle as a result of this gluing, we keep only one copy). There are at most $(2^m)^2$ resulting hypergraphs. Suppose that $\mc{F}'$ is obtained in this way and has a repeated triangle. Consider the probability that a copy of $\mc{F}'$ that extends $T_1,T_2$ simultaneously can be found in $\mc{H}$. There are $O(m^2)$ ways to choose which triangles correspond to $T_1,T_2$. Fixing such a choice the probability is bounded by $n^{v(\mc{F}')-6}(1/n)^{e(\mc{F}')-2}$ in the vertex-disjoint case and $n^{v(\mc{F}')-5}(1/n)^{e(\mc{F}')-2}$ when $T_1$ and $T_2$ share a vertex. Therefore, it suffices to show that in both cases $e(\mc{F}')\ge v(\mc{F}')-2$.

We will use the following property of $\mc{F}_{2m}$, which is related to it being an \textit{Erd\H{o}s configuration} in the sense of \cite{GKLO20,KSSS22}. Let $\mc{S} \subseteq \mc{F}_{2m}$ be a set of $s>0$ triangles. Then $\mc{S}$ is incident to at least $s+2$ vertices. Indeed, if $s=2m$ then $\mc{S}=\mc{F}_{2m}$ and so $\mc{S}$ is incident to all $2m+2 = s+2$ vertices. Otherwise, the triangles in $\mc{S}$ cover $s<2m$ edges in the cycle $C_{2m}$. The covered edges form a collection of paths and so have at least $s+1$ vertices. Furthermore, $\mc{S}$ is incident to at least one of the two additional vertices in $\mc{F}_{2m}$, and so $\mc{S}$ is incident to at least $s+2$ vertices, as desired.

Returning to the main argument, let $v$ be the number of pairs of glued vertices and $t$ be the number of triangles that occur as repeated triangles. We have $v(\mc{F}') = 2(2m+2)-v$ and $e(\mc{F}') = 2(2m)-t$, so we equivalently must show $v\ge t+2$. Suppose that the $t$ repeated triangles, within a single copy of $\mc{F}_{2m}$, span $w$ vertices. By the argument above $w \geq t+2$. Moreover, it is evident that $v\ge w$, completing the proof.
\end{proof}

\subsection{Bootstrapping}\label{sub:boosting}

We are now in a position to lay out the bootstrapping argument that establishes iteratively improved versions of \cref{thm:boosting} and, eventually, \cref{thm:main}.

\begin{proof}[Proof of \cref{thm:boosting}]
For $k\ge 0$ let $\eta_k = 2/(k+2)$. As discussed, \cref{thm:boosting}$(\eta=\eta_0)$ follows from \cite[Theorem~1.4]{Kee14} or \cite[Theorem~1.1]{GKLO16}. We will show \cref{thm:boosting}$(\eta = \eta_k)$ via induction on $k$. Then, to see the result for arbitrary $\eta > 0$ we may round down to the nearest $\eta_k$ and appropriately adjust the constant $C_{\ref{thm:boosting}}$.

Let $k\ge 1$ and assume that \cref{thm:boosting}$(\eta=\eta_{k-1})$ holds. Set $\gamma_k = 4/(2k+5)$ and consider a graph $G$ on $n\ge\exp(C_{\ref{thm:boosting}}/\eta_k^4)$ vertices satisfying the conditions of \cref{thm:boosting}$(\eta=\eta_k)$. Our goal is to construct a measure $\mu$ on triangle-decompositions of $G$ which is $O(n^{\eta_k-1}/(\log n)^2)$-spread, since then \cref{thm:FKNP} will imply the result.

Our first task is to construct the absorber. As it concerns the measure $\mu$, the absorber is deterministic. However, we will need it to satisfy certain structural properties. For this reason we will use the probabilistic method. We define the following quantities:
\begin{align*}
&M = n^{1-\gamma_k}\exp \left((\log n)^{1/3} \right),~m = \sqrt{\log n},\\
&\ell_1 = \sqrt{\frac{n}{M^{1+\eta_{k-1}}}},~\ell_2 = \frac{\sqrt{M^{1+\eta_{k-1}}n}}{e^m},~\ell_1' = \frac{\ell_1}{2(\log M)^6}.
\end{align*}

\begin{claim}
	There exist sets $X,X_1,\ldots,X_{\ell_1},Y_1,\ldots,Y_{\ell_1} \subseteq V(G)$ satisfying the following conditions.
	\begin{enumerate}[{\bfseries{Ab\arabic{enumi}}}]
		\item\label{Ab1} $|X|=M$ and $Y_1,\ldots,Y_{\ell_1}$ are disjoint sets of vertices within $V(G)\setminus X$, each of size $\ell_2$, and $X_1,\ldots,X_{\ell_1}\subseteq X$ are sets of vertices of size $|X|/(\log|X|)^2$. We set $G_i = G[X_i\cup Y_i]$.
		\item\label{Ab2} Every triangle in $G[X]$ is contained in at least $\ell_1'$ graphs $G_i$.
		\item\label{Ab3} Every $v\in V(G)$ satisfies $|(V(G)\setminus N_G(v))\cap X|\le|X|(1/\log|X|-2/(\log|X|)^2)$.
		\item\label{Ab4} Every $i\in[\ell_1]$ satisfies $\Delta(G_i^c)\le|V(G_i)|/\log|V(G_i)|$.
	\end{enumerate}
\end{claim}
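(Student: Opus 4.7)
The plan is to construct all of the sets via a straightforward probabilistic argument and verify each property by a Chernoff bound followed by a union bound. Specifically, I would sample $X\subseteq V(G)$ uniformly at random among $M$-element subsets, then independently for each $i\in[\ell_1]$ sample $X_i\subseteq X$ uniformly at random among subsets of size $|X|/(\log|X|)^2$, and finally sample $(Y_1,\ldots,Y_{\ell_1})$ uniformly at random among tuples of pairwise-disjoint $\ell_2$-element subsets of $V(G)\setminus X$. This last distribution has nonempty support since $\ell_1\ell_2 = n/e^{\sqrt{\log n}}$ is much smaller than $|V(G)\setminus X|$. Property \textbf{Ab1} then holds by construction, and it remains to check \textbf{Ab2}--\textbf{Ab4}.

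Properties \textbf{Ab3} and \textbf{Ab4} will be routine. For \textbf{Ab3}, the random variable $|(V(G)\setminus N_G(v))\cap X|$ is hypergeometric with mean at most $M/\log n$. Using $\log M = (1-\gamma_k)\log n + (\log n)^{1/3} < \log n$, the bound $M/\log n < M(1/\log M - 2/(\log M)^2)$ has slack of order $\gamma_k M/\log M$, which easily accommodates Chernoff fluctuations; a union bound over $v\in V(G)$ suffices. For \textbf{Ab4}, the $G_i^c$-degree of a vertex $v$ decomposes as a sum of two (conditionally) hypergeometric variables counting non-neighbors of $v$ in $X_i$ and in $Y_i$, with combined mean at most $|V(G_i)|/\log n$. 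Since $\log n - \log|V(G_i)| = \Theta(\log n)$ (using that $|V(G_i)|$ is dominated by $\ell_2$, which is polynomially smaller than $n$), this mean is smaller than $|V(G_i)|/\log|V(G_i)|$ by a constant multiplicative factor, so Chernoff followed by a union bound over $i\in[\ell_1]$ and $v\in V(G)$ handles this property.

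The main obstacle is \textbf{Ab2}. For a fixed triangle $T=\{a,b,c\}\subseteq X$, the probability $\mb P[T\subseteq X_i] = \binom{M-3}{|X_i|-3}/\binom{M}{|X_i|} = (1+o(1))/(\log M)^6$ independently across $i\in[\ell_1]$, so the number of $G_i$ containing $T$ has mean $(2+o(1))\ell_1'$ and Chernoff yields a lower-tail probability of $\exp(-\Omega(\ell_1'))$. A union bound over the at most $\binom{M}{3}$ triangles in $G[X]$ then works provided $\ell_1' \gg \log M$, which reduces to showing that $\ell_1'$ is polynomial in $n$. Writing $\ell_1 = n^{(1-(1-\gamma_k)(1+\eta_{k-1}))/2 + o(1)}$ and using the identity
\[
(1-\gamma_k)(1+\eta_{k-1}) = \frac{(2k+1)(k+3)}{(2k+5)(k+1)} = 1 - \frac{2}{2k^2+7k+5} < 1,
\]
this is the numeric fact that justifies the particular choices of $\eta_k$ and $\gamma_k$ (and hence the choice of $M,\ell_1,\ell_1'$) made earlier in the argument. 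Once this check is in hand, each of the four bad events has vanishing probability and an intersection argument produces the desired sets.
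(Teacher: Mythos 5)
Your proof is correct and follows essentially the same probabilistic construction as the paper: sample $X$ uniformly of size $M$, sample the $X_i$ and $Y_i$ uniformly, and verify \textbf{Ab2}--\textbf{Ab4} by Chernoff plus union bounds. You actually fill in details the paper leaves implicit (the paper states the binomial parameter for \textbf{Ab2} as $(\log M)^{-2}$, an apparent typo for $(\log M)^{-6}$, and it does not explicitly verify that $\ell_1'$ is polynomial in $n$ here); your derivation of $\mb P[T\subseteq X_i]\approx(\log M)^{-6}$ and the arithmetic confirming $(1-\gamma_k)(1+\eta_{k-1})<1$ are both correct and supply that missing justification.
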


\begin{proof}
First, let $X \subseteq V(G)$ be a uniformly random set of size $M$. Then, sample $\ell_1$ disjoint sets of vertices $Y_1,\ldots,Y_{\ell_1}$ of size $\ell_2$ uniformly at random from $V(G)\setminus X$. For each $i\in[\ell_1]$ choose a set $X_i \subseteq X$ of size $|X|/(\log |X|)^2$ uniformly at random. Then \cref{Ab1} is satisfied by definition.

Observe that the number of graphs $G_i$ containing a fixed triangle in $G[X]$ is distributed binomially with parameters $(\ell_1,(\log M)^{-2})$. Therefore, by Chernoff's inequality and a union bound, with probability $1-O(|X|^{-1})$ every triangle in $G[X]$ is contained in at least $\ell_1'$ graphs $G_i$. This establishes \cref{Ab2}.

Recall that $\Delta(G^c)\le n/\log n$. Thus, since $X$ was chosen randomly, by a similar application of Chernoff's inequality we conclude that with probability $1-O(|X|^{-1})$ each $v\in V(G)$ satisfies $|(V(G)\setminus N(v))\cap X|\le |X|(1/\log |X|-2/(\log |X|)^2)$. This proves \cref{Ab3}. \cref{Ab4} follows from a similar argument.
\end{proof}

For the remainder of the proof we fix (deterministic) sets $X, X_1,\ldots,X_{\ell_1}, Y_1,\ldots,Y_{\ell_1}$ and graphs $G_1,\ldots,G_{\ell_1}$ satisfying \cref{Ab1,Ab2,Ab3,Ab4}.

We now define $\mu$, which corresponds to the output of a randomized algorithm to find a triangle-decomposition of $G$. At a high level, we (i) sample a rich random \textit{template} of triangles within $X\cup Y_1\cup\cdots\cup Y_{\ell_1}$ to use as an absorber, (ii) use \cref{prop:IA-random-set} to run iterative absorption to cover the remainder apart from $X$ in a spread fashion, (iii) use \cref{thm:boosting}$(\eta=\eta_{k-1})$ to decompose this remainder in a $|X|^{-1+\eta_{k-1}}$-spread fashion, and (iv) simultaneously flip all of the resulting triangles within $X$ using the template to improve the spread. The algorithm is as follows:
\begin{enumerate}[{\bfseries{Alg\arabic{enumi}}}]
    \item\label{Alg1} For each $i\in[\ell_1]$ let $G_i' \coloneqq G_i[X_i\cup Y_i]\setminus G_i[X_i]$ and sample $(\mc{H}_i,\mc{H}_i') \sim \mb{G}_*^{(3)}(G_i',X_i,{|V(G_i')|^{-1}})$ (with independent samples for each $i \in [\ell_1]$).
    
    \item\label{Alg2} Let $\mc{H}_{\mr{IA}} \sim \mb{G}^{(3)}(n,(\log |X|)^{C_{\ref{prop:IA-random-set}}}/|X|)$.
    
    \item\label{Alg3} Sample every triangle in $G[X]$ with probability $|X|^{-1+\eta_{k-1}}$ and call this family $\mc{H}_{\mr{Rand}}$.
    
    \item\label{Alg4} Condition on the event $\mc{E}_{\mr{IA}}$ that there is a triangle set $\mc{H}_{\mr{Dec}} \subseteq \mc{H}_{\mr{IA}}$ satisfying the conditions \cref{prop:IA-random-set}(1-3) for $G' = G\setminus(\bigcup_{i=1}^{\ell_1} E(\mc{H}_i))$.
    
    \item\label{Alg5} Condition on the event $\mc{E}_{\mr{Ind}}$ that $(G'\setminus E(\mc{H}_{\mr{Dec}}))[X]$ has a triangle-decomposition $\mc{H}_{\mr{Ind}} \subseteq \mc{H}_{\mr{Rand}}$.
    
    \item\label{Alg6} Condition on the event $\mc{E}_{\mr{Abs}}$ that we can find, for each $T\in\mc{H}_{\mr{Ind}}$, an index $i_T\in[\ell_1]$ and a $3$-uniform hypergraph $F_T\simeq\mc{F}_{2m}$ contained in $\mc{H}_{i_T} \cup \{T\}$ such that (a) $V(F_T)\cap X = V(T)$ and (b) the triangle sets $\{F_T\}_{T\in\mc{H}_{\mr{Ind}}}$ are edge-disjoint.
    
    \item\label{Alg7} Let $\mc{H}_{\mr{Abs}} = \bigcup_{T\in\mc{H}_{\mr{Ind}}} (F_T \setminus \{T\})$ and let $\mc{H}_{\mr{Flip}} = \bigcup_{T\in\mc{H}_{\mr{Ind}}} F_T^\ast$, where for each $T\in\mc{H}_{\mr{Ind}}$ the $3$-graph $F_T^\ast$ consists of all triangles of the corresponding ``absorber-flip'' $\mc{F}_{2m}^\ast$ associated to $F_T$.
    
    \item\label{Alg8} Finally, output the triple system $\mc{H} = \mc{H}_{\mr{Dec}}\cup\mc{H}_{\mr{Flip}}\cup(\bigcup_{i=1}^{\ell_1}\mc{H}_i\setminus\mc{H}_{\mr{Abs}})$.
\end{enumerate}

\begin{remark}
Although we have described an algorithm to \textit{sample} from $\mu$, this should not be misconstrued as an algorithm to \textit{find} the triangle-decomposition guaranteed by \cref{thm:boosting}. The main reason is that we will ultimately use the non-algorithmic \cref{thm:FKNP} applied to $\mu$.

Nevertheless, with a slight modification to the algorithm one can efficiently sample from $\mu$. To do so, in \cref{Alg5}, one should inductively invoke the ability to efficiently sample from an $|X|^{-1+\eta_{k-1}}$-spread distribution on triangle-decompositions. However, we would still rely on \cref{thm:FKNP} to convert the spread distribution into a threshold result. Additionally, the analysis is slightly more involved. As thresholds, rather than spread distributions, are our main concern, we have not made this change.
\end{remark}

To verify that $\mu$ satisfies the desired properties, we must check that $\mu$ is supported on triangle-decompositions and also that the above process succeeds with nonzero probability (otherwise $\mu$ is not well-defined). We do so in \cref{clm:decomp,clm:success}. Finally, in \cref{clm:spread} we show that $\mu$ has the appropriate spread.

\begin{claim}\label{clm:decomp}
$\mc{H}$ is a triangle-decomposition of $G$.
\end{claim}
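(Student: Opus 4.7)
The plan is a direct bookkeeping verification: I would check (a) every triangle of $\mc{H}$ lies in $G$, and (b) every edge of $G$ is covered exactly once. The single structural fact doing the work is that flipping preserves covered edges, $E(F_T^{\ast}) = E(F_T)$, a property built into the $\mc{F}_{2m}/\mc{F}_{2m}^{\ast}$ construction (both are triangle-decompositions of the same underlying graph).

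Two preliminaries. First, the graphs $G_1',\ldots,G_{\ell_1}'$ are pairwise edge-disjoint and contain no edge of $G[X]$: every triangle of $G_i'$ uses at least two vertices of $Y_i$ (since $G_i[X_i]$ was deleted) and the $Y_i$ are pairwise disjoint. Thus $\bigcup_i E(\mc{H}_i)\cap G[X]=\emptyset$, so $G'[X]=G[X]$, and each edge of $\bigcup_i E(\mc{H}_i)$ is covered by a unique $\Delta\in\mc{H}_i$ for a unique $i$. Second, every triangle in $\mc{H}$ is a triangle of $G$: $\mc{H}_{\mr{Dec}}\subseteq G'\subseteq G$ and $\mc{H}_i\setminus\mc{H}_{\mr{Abs}}\subseteq G_i'\subseteq G$ by construction, and each triangle of $F_T^{\ast}$ uses only edges of $F_T=\{T\}\cup (F_T\setminus\{T\})$, lying in $G[X]\cup E(\mc{H}_{i_T})\subseteq G$.

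Now set $L:=G[X]\setminus E(\mc{H}_{\mr{Dec}})$, so that by $\mc{E}_{\mr{Ind}}$ the set $\mc{H}_{\mr{Ind}}$ triangle-decomposes $L$. Fix $e\in E(G)$ and split on whether $e\in G[X]$. Suppose $e\in G[X]$. Then $e\notin\bigcup_i E(\mc{H}_i)$, so no triangle of $\mc{H}_i\setminus\mc{H}_{\mr{Abs}}$ covers $e$; and since $E(F_T^{\ast})\cap G[X]=E(T)$, an $F_T^{\ast}$ covers $e$ iff $e\in E(T)$ for some $T\in\mc{H}_{\mr{Ind}}$, i.e., iff $e\in L$. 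So if $e\in E(\mc{H}_{\mr{Dec}})$ then $e\notin L$ and exactly one triangle of $\mc{H}_{\mr{Dec}}$ covers $e$ while no flip does; if $e\in L$, a unique $T\in\mc{H}_{\mr{Ind}}$ covers $e$, hence a unique triangle of $F_T^{\ast}\subseteq\mc{H}_{\mr{Flip}}$ covers $e$, with uniqueness across $T$ from edge-disjointness of $\{F_T\}$ (part of $\mc{E}_{\mr{Abs}}$). Now suppose $e\notin G[X]$. Since $E(F_T^{\ast})\setminus G[X]=E(F_T\setminus\{T\})\subseteq\bigcup_i E(\mc{H}_i)$ and $E(\mc{H}_{\mr{Dec}})\subseteq G'=G\setminus\bigcup_i E(\mc{H}_i)$, the edge sets covered by $\mc{H}_{\mr{Dec}}$ and by $\mc{H}_{\mr{Flip}}\cup\bigcup_i(\mc{H}_i\setminus\mc{H}_{\mr{Abs}})$ are disjoint outside of $G[X]$. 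If $e\notin\bigcup_i E(\mc{H}_i)$, then $e\in G'\setminus G[X]$ is covered uniquely by $\mc{H}_{\mr{Dec}}$ via \cref{prop:IA-random-set}(1,2). Otherwise $e$ is covered by a unique $\Delta\in\mc{H}_i$ for a unique $i$; if $\Delta\in F_T\setminus\{T\}$ for some (necessarily unique) $T$, then $\Delta\in\mc{H}_{\mr{Abs}}$ is removed but exactly one triangle of the corresponding $F_T^{\ast}$ covers $e$; otherwise $\Delta\in\mc{H}_i\setminus\mc{H}_{\mr{Abs}}$ covers $e$. In every subcase $e$ is covered exactly once.

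The claim is essentially combinatorial bookkeeping; the main ``obstacle'' is coordinating the four triangle sources $\mc{H}_{\mr{Dec}}$, $\bigcup_i(\mc{H}_i\setminus\mc{H}_{\mr{Abs}})$, $\mc{H}_{\mr{Ind}}$, and its flipped form $\mc{H}_{\mr{Flip}}$, and checking that the flip identity $E(F_T^{\ast})=E(F_T)$ correctly redirects coverage as triangles of $\mc{H}_{\mr{Abs}}$ are removed and replaced. Notably, no probabilistic input (properties \textbf{Ab1}--\textbf{Ab4} or the events $\mc{E}_{\mr{IA}},\mc{E}_{\mr{Ind}},\mc{E}_{\mr{Abs}}$) is needed for \cref{clm:decomp} beyond the deterministic structural consequences already invoked; those inputs are reserved for showing the construction succeeds with positive probability.
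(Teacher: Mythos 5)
Your proof is correct and rests on the same key facts as the paper's: edge-disjointness of $\bigcup_i \mc{H}_i$ (via disjointness of the $Y_i$), the identity $E(F_T^\ast) = E(F_T)$, and edge-disjointness of the $\{F_T\}$ guaranteed by $\mc{E}_{\mr{Abs}}$. The paper argues more compactly at the level of sets (first that $\mc{H}_{\mr{Dec}}\cup\mc{H}_{\mr{Ind}}\cup\bigcup_i\mc{H}_i$ is a triangle-decomposition of $G$, then that $\mc{H}_{\mr{Ind}}\cup\mc{H}_{\mr{Abs}}$ can be swapped for $\mc{H}_{\mr{Flip}}$ since both decompose the same edge set), whereas you carry out an explicit edge-by-edge case analysis; this is more verbose but is the same argument unwound.
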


\begin{proof}
First note that $\bigcup_{i=1}^{\ell_1}\mc{H}_i$ is a set of edge-disjoint triangles contained in $G$. Indeed, every $\mc{H}_i$ is such a set by definition. Additionally, the sets $Y_1,\ldots,Y_{\ell_1}$ are disjoint, and by definition every edge in $G_i'$ contains at least one vertex from $Y_i$. Hence, the triangle sets $\mc{H}_1,\ldots,\mc{H}_{\ell_1}$ are mutually edge-disjoint.

Next, by \cref{Alg4} we have that $\mc{H}_{\mr{Dec}}$ is a set of edge-disjoint triangles covering all edges of $G'$ except for some in $X$. By \cref{Alg5} these remaining edges are covered by $\mc{H}_{\mr{Ind}}$. That is, $\mc{H}_{\mr{Dec}} \cup\mc{H}_{\mr{Ind}} \cup\bigcup_{i=1}^{\ell_1}\mc{H}_i$ is a triangle-decomposition of $G$. Finally, we have $E(F_T) = E(F_T^\ast)$ and the $F_T$ are edge-disjoint by \cref{Alg6}. Hence $\mc{H}_{\mr{Ind}} \cup\mc{H}_{\mr{Abs}}$ and $\mc{H}_{\mr{Flip}}$ are each edge-disjoint triangle-decompositions, and they decompose the same underlying graph. Since $\mc{H}_{\mr{Abs}}\subseteq\bigcup_{i=1}^{\ell_1}\mc{H}_i$ by \cref{Alg6}, the result follows.
\end{proof}

Next we show that $\mu$ is well-defined. In fact, we show that the above process defining $\mu$ succeeds with probability at least $(3/4)(1/2)(2/3) = 1/4$.

\begin{claim}\label{clm:success}
We have $\mb{P}[\mc{E}_{\mr{IA}}]\ge 3/4$, $\mb{P}[\mc{E}_{\mr{Ind}}|\mc{E}_{\mr{IA}}]\ge 1/2$, and $\mb{P}[\mc{E}_{\mr{Abs}}|\mc{E}_{\mr{IA}}\wedge\mc{E}_{\mr{Ind}}]\ge 2/3$.
\end{claim}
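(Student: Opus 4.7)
The plan is to verify each of the three probability bounds in turn, using \cref{prop:IA-random-set} for the first, the inductive hypothesis \cref{thm:boosting}$(\eta_{k-1})$ for the second, and a combination of \cref{lem:hypergraph-comp,lem:hypergraph-comp-2} (with Chernoff and a union bound) for the third.

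For the first inequality, I would verify that the graph $G'\coloneqq G\setminus\bigcup_i E(\mc{H}_i)$ satisfies the hypotheses of \cref{prop:IA-random-set} (with distinguished set $X$) with probability $1-o(1)$ over the samples in \cref{Alg1}. Triangle-divisibility of $G'$ is inherited from $G$ since the $\mc{H}_i$'s are mutually edge-disjoint triangle sets (edges of $G_i'$ all have an endpoint in $Y_i$ and the $Y_i$ are pairwise disjoint, so an edge in $G_{i_1}'\cap G_{i_2}'$ would have to lie in $G[X_{i_1}\cap X_{i_2}]$, which is excluded). The degree bound $\Delta((G')^c)\le n/\log n+n/(\log n)^2$ reduces, via $\Delta(G^c)\le n/\log n$, to bounding $\Delta(\bigcup_i E(\mc{H}_i))$: each $\mc{H}_i\subseteq \mc{H}_i'$ is a sub-sample of triangles in $G_i'$ at rate $1/|V(G_i')|$, so for each $v$ the number of incident triangles in $\bigcup_i\mc{H}_i$ is dominated by a binomial with small expectation, controlled via \cref{lem:chernoff} plus a union bound over $V(G)$. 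The near-complete-neighborhood condition on $N_{G'}(v)\cap X$ then follows from \cref{Ab3} and the degree bound just established. Since $\mc{H}_{\mr{IA}}$ is sampled independently in \cref{Alg2}, \cref{prop:IA-random-set} applied to $G'$ yields $\mb{P}[\mc{E}_{\mr{IA}}]\ge 3/4$.

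For the second inequality, I would condition on $\mc{E}_{\mr{IA}}$ and consider the graph $L\coloneqq (G'\setminus E(\mc{H}_{\mr{Dec}}))[X]$. This $L$ is triangle-divisible (everything removed from $G$ is a union of triangle sets), has $\Delta(L^c)\le|X|/\log|X|$ by \cref{prop:IA-random-set}(3), and lives on $|X|=M\ge\exp(C_{\ref{thm:boosting}}/\eta_{k-1}^4)$ vertices for $n$ large enough. Since $\mc{H}_{\mr{Rand}}$ in \cref{Alg3} is sampled independently of everything involved in $\mc{E}_{\mr{IA}}$, its restriction to triangles in $X$ is distributed as $\mb{G}^{(3)}(|X|,|X|^{\eta_{k-1}}/|X|)$, matching the hypothesis of \cref{thm:boosting}$(\eta_{k-1})$. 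The inductive hypothesis then yields $\mb{P}[\mc{E}_{\mr{Ind}}\mid\mc{E}_{\mr{IA}}]\ge 1/2$.

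For the third (and most delicate) inequality, I would proceed in two stages. First, I show that w.h.p.\ every $T\in\mc{H}_{\mr{Ind}}$ has many candidate pairs $(i,F)$: by \cref{Ab2} at least $\ell_1'$ indices $i$ satisfy $V(T)\subseteq X_i$, and by \cref{lem:hypergraph-comp} applied to each such $i$ independently the event that $\mc{H}_i\cup\{T\}$ contains a suitable $F\simeq\mc{F}_{2m}$ occurs with probability $\ge(4e)^{-(12m-6)}$; \cref{lem:chernoff} plus a union over the $O(M^2)$ triangles then gives at least $N\coloneqq\ell_1'(4e)^{-(12m-6)}/2$ candidates per $T$. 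Next, observe that if $i_{T_1}\ne i_{T_2}$ then $F_{T_1}$ and $F_{T_2}$ have their non-$X$ vertices in disjoint $Y$-sets, so any common graph edge must lie in $E(T_1)\cap E(T_2)=\emptyset$ (since $\mc{H}_{\mr{Ind}}$ is edge-disjoint as a triangle-decomposition); thus only $i_{T_1}=i_{T_2}=i$ is problematic. In that case, \cref{lem:hypergraph-comp-2} bounds the probability that $\mc{H}_i$ admits conflicting candidates for $(T_1,T_2)$ by $O(2^{3m}/\ell_2^2)$ (or $O(2^{3m}/\ell_2)$ when $T_1,T_2$ share a vertex); summing over $i\in[\ell_1]$ and the $O(M^4)$ pairs (resp.\ $O(M^3)$ pairs sharing a vertex) the expected number of conflicts is $o(1)$ by the chosen parameters. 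Thus w.h.p.\ each $T$ has many candidates and no pairwise conflict, and any assignment---greedy, or uniformly random analyzed via \cref{lem:lll}---produces the required edge-disjoint family $\{F_T\}$. I expect this third step to be the main obstacle, because it requires the delicate balance between candidate count ($\ge\ell_1'(4e)^{-12m}$) and conflict probability ($\le 2^{3m}/\ell_2$) that dictates the exact choices of $m=\sqrt{\log n}$, $M=n^{1-\gamma_k}\exp((\log n)^{1/3})$, $\ell_1$, $\ell_2$, and $\ell_1'$; additionally, the conditioning on $\mc{E}_{\mr{IA}}\wedge\mc{E}_{\mr{Ind}}$ perturbs the law of the $\mc{H}_i$'s, which I would handle by establishing the Part~3 failure bounds unconditionally and then dividing by $\mb{P}[\mc{E}_{\mr{IA}}\wedge\mc{E}_{\mr{Ind}}]\ge 3/8$ supplied by the first two parts.
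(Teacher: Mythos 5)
The plan for the first two inequalities is broadly correct, though for $\mb{P}[\mc{E}_{\mr{IA}}]\ge 3/4$ the paper actually verifies the hypotheses of \cref{prop:IA-random-set} \emph{deterministically} rather than with high probability over $\bigcup_i \mc{H}_i$: since every removed edge lies in some $E(G_i')\subseteq K_{X_i,Y_i}\cup K[Y_i]$, one directly has $\Delta((G')^c)\le\Delta(G^c)+|X|+\sum_i|Y_i|$ and $|X\setminus N_{G'}(v)|\le |X|(1/\log|X|-2/(\log|X|)^2)+|X_i|$, with no Chernoff argument needed. Your probabilistic version would only yield $\mb{P}[\mc{E}_{\mr{IA}}]\ge 3/4-o(1)$, which is fixable but unnecessary.

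The third inequality is where your argument breaks. The claim that ``the expected number of conflicts is $o(1)$ by the chosen parameters'' is false, and no union bound of the form you describe can give it. Taking the pairs to range over $G[X]$ gives roughly $M^6$ vertex-disjoint pairs; restricting to $\mc{H}_{\mr{Rand}}$ (which is the natural unconditional surrogate for $\mc{H}_{\mr{Ind}}$) still gives $\approx M^{4+2\eta_{k-1}}$ pairs, and multiplying by $\sum_i O(2^{3m}/\ell_2^2)$ yields $e^{O(m)}M^{(5+\eta_{k-1})/2}/n^{1/2}$, which is $n^{\Omega(1)}$ for every $k\ge 1$ with the given parameters. Even the tighter quantity the paper controls, namely $\Delta(H)$ where $H$ is the conflict graph on $\mc{H}_{\mr{Ind}}$, is only bounded by $e^{O(\sqrt{\log n})}$ (see \cref{Pack1}), not $o(1)$. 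So pairwise conflicts do occur, and you cannot conclude ``any assignment works.'' The paper's argument is precisely designed around this: Step~1 (via \cref{lem:hypergraph-comp}, Markov, Chernoff) bounds, for each $T$, the number of indices $i$ that are simultaneously usable ($N_{T,i}>0$) and not too conflict-prone ($M_{T,i}$ small); Step~2 intersects with $\mc{H}_{\mr{Rand}}$ to get $\Delta(H)\le e^{O(\sqrt{\log n})}$; Step~3 exploits the gap $\Delta(H)\ll |I_T|=\ell_1'/(32e)^{12m-6}=n^{\Omega(1/k^2)}/e^{O(\sqrt{\log n})}$ by picking random indices $i_T\in I_T$ and applying the Lov\'asz Local Lemma with $\mb{P}[\mc{B}_f]\le\mb{P}[i_T=i_{T'}]\le 1/|I_T|$. (A greedy assignment over indices would in fact also succeed because $\Delta(H)+1\le|I_T|$, but the justification must run through bounding $\Delta(H)$, not through claiming conflicts vanish.) To repair your proof you would need to replace the expectation/union-bound step with a per-$T$ degree bound on the conflict graph, which is exactly what \cref{Tem2} and \cref{Pack1} accomplish. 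Your instinct to establish the structural bounds unconditionally and divide by $\mb{P}[\mc{E}_{\mr{IA}}\wedge\mc{E}_{\mr{Ind}}]$ is correct and matches the paper.
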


\begin{proof}
We first claim that $G'$ and $X$ satisfy the assumptions of \cref{prop:IA-random-set} regardless of the outcome of $\bigcup_{i=1}^{\ell_1}\mc{H}_i$. Note that by \cref{Alg1,Alg4}, $G'$ differs from $G$ only by edges in $\bigcup_{i=1}^{\ell_1} E(G_i')$. Additionally, since $Y_1,\ldots,Y_{\ell_1}$ are disjoint, every $v\in V(G)\setminus X$ is contained in at most one graph $G_i'$. By construction, every $v \in Y_i$ has at most $|X_i| = |X|/(\log|X|)^2$ neighbors in $X$ within $G_i'$. Therefore $|X\setminus N_{G'}(v)|\le|X|(1/\log|X|-2/(\log|X|)^2) + |X|/(\log|X|)^2$, using \cref{Ab3}. For $v\in X$, the number of neighbors within $X$ is unchanged upon removing $\bigcup_{i=1}^{\ell_1} E(G_i')$. Additionally, $C_{\ref{prop:IA-random-set}}\le|X|\le n/(\log n)^3$ is immediate from \cref{Ab1} and the assumption $n \geq \exp(C_{\ref{thm:boosting}}/\eta_k^4)$. Finally,
\[
\Delta((G')^c) \le \Delta(G^c)+|X|+ \sum_{i=1}^{\ell_1} |Y_i| \stackrel{\text{\cref{Ab1}}}{\leq} \Delta(G^c) + n/(\log n)^2 \leq n / \log n + n / (\log n)^2.
\]
Thus all conditions are satisfied, and by \cref{prop:IA-random-set} the necessary $\mc{H}_{\mr{Dec}}\subseteq\mc{H}_{\mr{IA}}$ exists with probability at least $3/4$.

We condition on $\mc{E}_{\mr{IA}}$ occurring. This means that $(G'\setminus E(\mc{H}_{\mr{Dec}}))[X]$ has maximum degree at most $|X|/\log|X|$ due to \cref{prop:IA-random-set}(3). Therefore the inductive hypothesis \cref{thm:boosting}$(\eta=\eta_{k-1})$ applies and shows that the necessary $\mc{H}_{\mr{Ind}}\subseteq\mc{H}_{\mr{Rand}}$ exists with probability at least $1/2$, since
\[|X|\ge n^{1-\gamma_k}\ge(\exp(C_{\ref{thm:boosting}}/\eta_k^4))^{1-\gamma_k}\ge\exp(C_{\ref{thm:boosting}}/\eta_{k-1}^4).\]

It remains to prove that $\mb{P}[\mc{E}_{\mr{Abs}}|\mc{E}_{\mr{IA}} \wedge \mc{E}_{\mr{Ind}}] \ge 2/3$. The proof is more involved, and we break it into three steps.

\textbf{Step 1: Potential absorbers with few overlaps.}
We first establish that certain conditions hold with high probability in the unconditional model. For any triangle $T$ of $G[X]$ and any $i\in[\ell_1]$, let $\mf{A}_{T,i}$ be the set of copies $\mc F$ of $\mc F_{2m}$ that contain $T$, use only triangles in $\mc H_i \cup \{T\}$, and $V(\mc F) \cap X = V(T)$. Note that this is a random collection depending only on $\mc{H}_i$. Given $T$ and $i$, we consider (a) the number of choices $N_{T,i} = |\mf{A}_{T,i}|$ and (b) the number $M_{T,i}$ of triangles $T'$ of $G[X]$ such that $E(T')\cap E(T) = \emptyset$ and $\mf{A}_{T,i},\mf{A}_{T',i}$ contain a pair of copies of $\mc{F}_{2m}$ that are not edge-disjoint (equivalently, these copies share a triangle). Let $\mc{E}_{\mr{Tem}}$ be the event that the following conditions for the random template hold:
\begin{enumerate}[{\bfseries{Tem\arabic{enumi}}}]
    \item\label{Tem1} For each triangle $T$ of $G[X]$, $|\{i\in[\ell_1]: N_{T,i} > 0\}| \ge\ell_1'/(8e)^{12m-6}$.
    \item\label{Tem2} For each $T$, at most $\ell_1'/(32e)^{12m-6}$ indices $i\in[\ell_1]$ satisfy $M_{T,i}\ge 2^{90m}|X|^2\!/\ell_2$.
\end{enumerate}
We claim that $\mb{P}[\mc{E}_{\mr{Tem}}]\ge 1-1/n$. For \cref{Tem1}, fix $T$ and note that $N_{T,i}$ for $i\in[\ell_1]$ are independent, as each depends only on $\mc{H}_i$. Furthermore, \cref{lem:hypergraph-comp} implies that $\mb{P}[N_{T,i}>0]\ge 1/(4e)^{12m-6}$ whenever $V(T)\subseteq X_i$. By \cref{Ab2} we have at least $\ell_1'$ such indices, so Chernoff's inequality implies that $|\{i\in[\ell_1]: N_{T,i} > 0\}| \ge \ell_1'/(8e)^{12m-6}$ with probability at least $1-n^{-5}$. Applying a union bound, \cref{Tem1} holds with probability at least $1-n^{-2}$.

For \cref{Tem2}, fix $T$ and note that by linearity of expectation and \cref{lem:hypergraph-comp-2} we have
\[\mb{E}M_{T,i}\le|X|^2\cdot O(2^{3m}\!/\ell_2) + |X|^3\cdot O(2^{3m}\!/\ell_2^2)\le 2^{4m}|X|^2\!/\ell_2,\]
since every triangle in $G[X]$ shares a vertex with at most $|X|^2$ other triangles in $G[X]$. By Markov's inequality, we have
\[\mb{P}[M_{T,i}\ge 2^{90m}|X|^2\!/\ell_2]\le 2^{-86m}.\]
Now let $Z_{T}$ be the number of indices $i \in [\ell_1]$ satisfying $M_{T,i} \geq 2^{90m}|X|^2\!/\ell_2$. Applying Chernoff's inequality:
\[
\mb P [Z_{T} \geq \ell_1'/(32e)^{12m-6}] \leq \frac{1}{n^5}.
\]
By a union bound, \cref{Tem2} holds with probability at least $1-n^{-2}$. Thus $\mc{E}_{\mr{Tem}}$ holds with probability at least $1-1/n$, as desired.

\textbf{Step 2: Few overlaps in absorbers for $\mc{H}_{\mr{Rand}}$.}
Next we show that the number of potential conflicts counted by $M_{T,i}$ significantly diminishes (w.h.p.) when we consider only the sparse random set $\mc{H}_{\mr{Rand}}$. First, for each triangle $T$ in $G[X]$ we define the index set $I_T$ of indices $i$ satisfying $N_{T,i} > 0$ and $M_{T,i} < 2^{90m}|X|^2/\ell_2$. If $\mc{E}_{\mr{Tem}}$ holds then $|I_T|\ge\ell_1'/(32e)^{12m-6}$ for all $T$.

Next, given $T$, let $\mc{M}_T'$ be the set of triangles $T'\in\mc{H}_{\mr{Rand}}$ with $E(T')\cap E(T) = \emptyset$ such that for some $i\in I_T$, the collections $\mf{A}_{T,i},\mf{A}_{T',i}$ contain a pair of copies of $\mc{F}_{2m}$ that are not edge-disjoint. Let $\mc{E}_{\mr{Pack}}$ be the event that the following holds:
\begin{enumerate}[{\bfseries{Pack\arabic{enumi}}}]
    \item\label{Pack1} For all $T\in\mc{H}_{\mr{Rand}}$, we have $|\mc{M}_T'|\le 2^{95m}|X|^{1+\eta_{k-1}}\ell_1/\ell_2$.
\end{enumerate}
We claim that $\mb{P}[\mc{E}_{\mr{Pack}}|\mc{E}_{\mr{Tem}}]\ge 1-1/n$. Indeed, reveal all of $\bigcup_{i=1}^{\ell_1}\mc{H}_i$ and fix any triangle $T$ of $G[X]$. Condition on $T \in \mc{H}_{\mr{Rand}}$ and on $\mc{E}_{\mr{Tem}}$. Then, expose the remainder of $\mc{H}_{\mr{Rand}}$. It follows that $|\mc{M}_T'|$ is distributed as $\mr{Bin}(M^\ast,|X|^{-1+\eta_{k-1}})$ for some
\[M^\ast\le\sum_{i\in I_T}M_{T,i}\le 2^{90m}|X|^2\ell_1/\ell_2.\]
Chernoff's inequality and a union bound now imply that $\mb{P} [\mc{E}_{\mr{Pack}} | \mc{E}_{\mr{Tem}}]\ge 1-1/n$, as desired.

\textbf{Step 3: Finding simultaneous edge-disjoint absorbers.}
Now we condition on $\mc{E}_{\mr{IA}}$ and $\mc{E}_{\mr{Ind}}$ occurring. We have
\[
\mb{P}[(\mc{E}_{\mr{Tem}}\wedge\mc{E}_{\mr{Pack}})^c|\mc{E}_{\mr{IA}}\wedge\mc{E}_{\mr{Ind}}]\le \frac{O(1/n)}{1/2\cdot 3/4} = O(1/n),
\]
so $\mb{P}[\mc{E}_{\mr{Tem}} \wedge \mc{E}_{\mr{Pack}} | \mc{E}_{\mr{IA}}\wedge\mc{E}_{\mr{Ind}}]\ge 1 - O(1/n)$.

We now argue that if $\mc{E}_{\mr{Tem}}\wedge\mc{E}_{\mr{Pack}}\wedge\mc{E}_{\mr{IA}} \wedge \mc{E}_{\mr{Ind}}$ holds then $\mc{E}_{\mr{Abs}}$ holds, which will finish the proof. Assuming $\mc{E}_{\mr{Ind}}$, \cref{Alg5} succeeds so there exists some edge-disjoint collection $\mc{H}_{\mr{Ind}}\subseteq\mc{H}_{\mr{Rand}}$. We will use the Lov\'asz Local Lemma (\cref{lem:lll}) to prove the existence of the absorbers necessary for \cref{Alg6}. As we are assuming \cref{Tem1,Tem2}, for each $T \in \mc{H}_{\mr{Ind}}$ we have a nonempty set of indices $I_T$ such that for all $i\in I_T$, $N_{T,i} > 0$. For every $T \in \mc{H}_{\mr{Ind}}$ we choose, uniformly at random, an index $i_T \in I_T$ and then uniformly at random choose one of the extensions of $T$ (isomorphic to $\mc{F}_{2m}$) counted by $N_{T,i_T}$. We make these choices independently for each triangle in $\mc{H}_{\mr{Ind}}$. We claim that with nonzero probability, all of these extensions are edge-disjoint.

We define a ``disjointness graph'' $H$ with vertex set $\mc{H}_{\mr{Ind}}$. For each $T,T'\in\mc{H}_{\mr{Ind}}$, we put an edge between them if there is some $i\in I_T\cap I_{T'}$ such that $\mf{A}_{T,i},\mf{A}_{T',i}$ contain a pair of copies of $\mc{F}_{2m}$ that are not edge-disjoint. We see that $N_H(T)\subseteq\mc{H}_{\mr{Ind}}\cap\mc{M}_T'$. Hence, by \cref{Pack1},
\[
\Delta(H)\le 2^{95m}|X|^{1+\eta_{k-1}}\ell_1/\ell_2 = 2^{90m}e^m = e^{O(\sqrt{\log n})}.
\]

For each edge $f\in E(H)$, let $\mc{B}_f$ be the ``bad'' event that the random extensions chosen for $T$ and $T'$ share an edge. We wish to show that with nonzero probability, we can simultaneously avoid all the bad events. This will prove the result, since by definition the only pairs $T,T'$ that can have a conflict with this process are those corresponding to some $f\in E(H)$. To apply the Lov\'asz Local Lemma we observe that each $\mc{B}_f$ is mutually independent from all other events except for $\mc{B}_{f'}$ where $f,f'$ share a vertex. There are at most $2\Delta(H)$ such events. Additionally, for each bad event $\mc{B}_f$ where $f = \{T,T'\}$ we have
\[
\mb{P}[\mc{B}_f]\le \mb{P} [i_T = i_{T'}] \leq \frac{1}{|I_T|} \le \frac{(32e)^{12m-6}}{\ell_1'}.
\]
Since $\ell_1\ge n^{1/(20k^2)}$, we see that $\ell_1' = n^{\Omega(1)}$ and therefore
\[
e\max_{f\in E(H)}\mb{P}[\mc{B}_f]\cdot(2\Delta(H)+1) = \frac{e^{O(\sqrt{n})}}{n^{\Omega(1)}} < 1.
\]
The result follows.
\end{proof}

Finally, we verify the spread condition.
\begin{claim}\label{clm:spread}
$\mu$ is $O(n^{\eta_k-1}/(\log n)^2)$-spread.
\end{claim}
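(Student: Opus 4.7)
The plan is to bound $\mu(\{A : S \subseteq A\}) \le q^{|S|}$ for $q = O(n^{\eta_k-1}/(\log n)^2)$ and arbitrary triangle set $S$. Since $\mu$ is the output law conditioned on $\mc{E}_{\mr{IA}} \wedge \mc{E}_{\mr{Ind}} \wedge \mc{E}_{\mr{Abs}}$, an event of probability at least $1/4$ by \cref{clm:success}, the conditioning inflates probabilities by at most a factor of $4$, so it suffices to bound the unconditional probability (under the process in \cref{Alg1,Alg2,Alg3,Alg4,Alg5,Alg6,Alg7,Alg8}) by $q^{|S|}/4$, which will be absorbed into the log slack.

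The main structural observation is that every triangle in $\mc{H}$ comes from exactly one of three sources: iterative absorption $\mc{H}_{\mr{Dec}} \subseteq \mc{H}_{\mr{IA}}$, the template residue $\bigcup_i \mc{H}_i \setminus \mc{H}_{\mr{Abs}} \subseteq \bigcup_i \mc{H}_i'$, or the flipped absorbers $\mc{H}_{\mr{Flip}}$. I would assign each $T \in S$ to one of these three sources, producing at most $3^{|S|}$ cases, and show that the contribution of each case is at most $(q/3)^{|S|}$. The underlying random objects $\mc{H}_{\mr{IA}}$ (from \cref{Alg2}), $\mc{H}_{\mr{Rand}}$ (from \cref{Alg3}), and the $\mc{H}_i'$'s (from \cref{Alg1}) are mutually independent, which factors cross-source probabilities after passing to the independent supersets (e.g.\ replacing $\mc{H}_{\mr{Dec}}$ by $\mc{H}_{\mr{IA}}$ and $\mc{H}_i \setminus \mc{H}_{\mr{Abs}}$ by $\mc{H}_i'$). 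Within a single source the triangles are approximately independent: for sources 1 and 2 directly by the binomial sampling, and for source 3 via more careful accounting.

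For the per-triangle marginals, the estimates are as follows. For $T \in \mc{H}_{\mr{IA}}$ the density is $(\log|X|)^{C_{\ref{prop:IA-random-set}}}/|X| = n^{\gamma_k - 1 + o(1)}$, which is $\ll q/3$ since $\gamma_k = 4/(2k+5) < 2/(k+2) = \eta_k$ leaves polynomial slack. For $T \in \bigcup_i \mc{H}_i'$, disjointness of the $Y_i$'s means $T$ can have the required vertex-pattern for at most one index $i$, and then $\mb{P}[T \in \mc{H}_i'] = 1/|V(G_i')| \sim 1/\ell_2$, which is $\ll q/3$ by the choice $\ell_2 = \sqrt{M^{1+\eta_{k-1}}n}/e^m$ together with a direct check that $(1-\gamma_k)(1+\eta_{k-1}) > 1 - 2\eta_k$. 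For $T^* \in \mc{H}_{\mr{Flip}}$, one must have $T^* \in F_{T_0}^*$ for some $T_0 \in \mc{H}_{\mr{Ind}} \subseteq \mc{H}_{\mr{Rand}}$ with $V(T_0) \supseteq V(T^*) \cap X$, and $|V(T^*) \cap X| \le 2$ (since $F_{T_0}^*$ excludes $T_0$). I would sum over the $O(|X|^{3 - |V(T^*) \cap X|})$ candidate sources $T_0$, multiply by $\mb{P}[T_0 \in \mc{H}_{\mr{Rand}}] = |X|^{-1+\eta_{k-1}}$, and for each such $T_0$ bound the algorithmic selection probability using $|I_{T_0}| \geq \ell_1'/(32e)^{12m-6}$ from \cref{Tem1,Tem2} together with an expected-count argument for how many $F \in \mf{A}_{T_0, i^*}$ contain $T^*$ (of relative order $\ell_2^{-2}$, up to $m$-dependent log factors). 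The identity $\ell_1\ell_2 = n e^{-m}$ together with the chosen value of $M$ then makes the total flip bound polynomially smaller than $q/3$.

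The main obstacle is the rigorous analysis of the flip case: managing the intertwined dependence between $\mc{H}_{\mr{Rand}}$, the $\mc{H}_i$'s, and the algorithmic choice of $(i_{T_0}, F_{T_0})$, and handling the case where several triangles in $S$ are flipped from a common source $T_0$. Since each $F_{T_0}^*$ contains only $O(m) = O(\sqrt{\log n})$ triangles, this sharing inflates the counts by at most $n^{o(1)}$, which is absorbed into the log-factor slack of $q$; the rest of the argument is a careful parameter balance using $\ell_1\ell_2 \approx n$, $M \approx n^{1-\gamma_k}$, and $\gamma_k < \eta_k$.
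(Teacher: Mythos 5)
Your setup matches the paper's: conditioning inflates probabilities by at most a factor of $4$ by \cref{clm:success}; each triangle of $\mc{H}$ comes from one of three sources, giving $3^{|S|}$ cases; one passes to the independent supersets $\mc{H}_{\mr{IA}}$, $\bigcup_i\mc{H}_i'$, $\mc{H}_{\mr{Rand}}$; and the per-triangle marginals for sources 1 and 2 are $(\log|X|)^{C}/|X|$ and $1/|V(G_i')|\le 1/|X|$ respectively. Up to this point the proposal is aligned with Step 1 of the paper's argument.

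The genuine gap is the treatment of $\mc{H}_{\mr{Flip}}$, which is the technical heart of the claim. Two issues. First, your proposed bound is a per-triangle union bound over sources $T_0$ combined with an analysis of the ``algorithmic selection probability'' of the absorber chosen in \cref{Alg6}. That is the wrong object to analyze: for the spread bound the paper never studies how the algorithm chooses $i_T$ and $F_T$; it bounds the probability that some valid absorber configuration extending the target triangles could \emph{exist} in the random template (this is the event $\mc{E}_J$ built from \cref{E3,E4,E5}). Working with the selection rule forces you to control $1/N_{T_0,i}$ in the denominator, which is correlated with the counts in the numerator and not obviously tractable. Second, and more seriously, the claim that ``sharing a common source $T_0$ inflates the counts by at most $n^{o(1)}$'' is asserted but not established, and it is precisely the nontrivial content. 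If $s'$ triangles of $S$ are all claimed to lie in a single $F_{T_0}^\ast$, you must show this event has probability roughly $q^{s'}$, not merely $q\cdot n^{o(1)}$. The paper does this by a recursion: a ``polymer'' is a nonempty subset of the flip triangles with a distinguished root, and the paper proves by induction on $e(\mc{P})$ that the number $f(\mc{P},T,\mc{T})$ of ways to extend $T$ to a copy of $\mc{P}$ using only triangles from the edge-disjoint set $\mc{T}$ satisfies $f(\mc{P},T,\mc{T})\le(2\ell_2)^{v(\mc{P})-e(\mc{P})-2}$ (plus a correction for the case $e(\mc{P})=2m$); it then bounds the number $g(\mc{P})$ of completions to a full $\mc{F}_{2m}^\ast$, and multiplies by the probability $|X|^{\eta_{k-1}-1}(1/\ell_2)^{2m-1}$ that the corresponding $\mc{F}_{2m}$ lies in $\mc{H}_{\mr{Rand}}\cup\bigcup_i\mc{H}_i'$. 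The edge-disjointness of $\mc T$, combined with the structure of $\mc{F}_{2m}^\ast$ (removing a degree-1 triangle strictly decreases the vertex count), is what makes the exponent $v(\mc{P})-e(\mc{P})-2$ close, and hence what makes the per-triangle cost $n^{\eta_k-1}/(\log n)^3$. Your proposal invokes neither the polymer decomposition nor the edge-disjointness, and an unstructured ``expected-count'' over extensions is not a substitute: without the Erd\H{o}s-configuration-type vertex-vs-edge counting there is no reason the per-triangle cost lands at $q$ rather than at something like $q^{1/s'}$ for a size-$s'$ cluster.

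A smaller imprecision: the final bound must absorb the $3^t$ case factor, a per-polymer $\binom{2m}{s'}\le(2m)^{s'}$ factor, and the $\frac{1}{1/4}$ from the conditioning; the paper does this by proving the flip bound at level $(\log n)^{-3}$ per triangle and stating the claim at $(\log n)^{-2}$. You mention ``absorbed into the log slack'' without tracking where the $m^{s'}$ factor enters, which is the reason the extra $\log n$ of room is needed.
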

\begin{proof}
\textbf{Step 1: Spread of $\mc{H} \setminus \mc{H}_{\mr{Flip}}$.}
Let $\mc T = \{T_1,\ldots,T_t\}$ be a set of triangles in $G$, and consider the probability that they simultaneously appear in a sample $\mc{S} \sim \mu$. Since $\mc{S}$ consists of edge-disjoint triangles we may assume that the triangles in $\mc T$ are edge-disjoint as well. Additionally, recall that the underlying probability space associated to $\mu$ is defined by independent samples $\mc{H}_{\mr{IA}}$, $\mc{H}_{\mr{Rand}}$, and $\mc{H}_1',\ldots,\mc{H}_{\ell_1}'$ conditional on $\mc{E}_{\mr{IA}}\wedge\mc{E}_{\mr{Ind}}\wedge\mc{E}_{\mr{Abs}}$. Finally, we have $\mc{H} = \mc{H}_{\mr{Dec}}\cup \mc{H}_{\mr{Flip}}\cup(\bigcup_{i=1}^{\ell_1}\mc{H}_i\setminus\mc{H}_{\mr{Abs}})$.

Suppose that $\mc T \subseteq \mc S$. Then each $T_j$ is either in $\mc{H}_{\mr{Dec}}\subseteq\mc{H}_{\mr{IA}}$, in $\bigcup_{i=1}^{\ell_1}\mc{H}_i\setminus\mc{H}_{\mr{Abs}}\subseteq\bigcup_{i=1}^{\ell_1}\mc{H}_i'$, or in $\mc{H}_{\mr{Flip}}$. Let $J_1,J_2,J_3 \subseteq [t]$ be the index sets of the triangles contained in each of these respective sets. Observe that for triangles $T_j$ such that $j\in J_3$, this means that $T_j$ is in some ``absorber-flip'' $\mc{F}_{2m}^\ast$ where the corresponding $\mc{F}_{2m}$ consists of a triangle $S_j\in\mc{H}_{\mr{Ind}}\subseteq\mc{H}_{\mr{Rand}}$ and $2m-1$ triangles of $\bigcup_{i=1}^{\ell_1}\mc{H}_i'$ by \cref{Alg6,Alg7}. Furthermore, those triangles cannot share an edge with any $\{T_j: j\in J_2\}$.

Now, for a partition $J_1\sqcup J_2\sqcup J_3=[t]$, let $\mc{E}_{J_1,J_2,J_3}$ be the event that:
\begin{enumerate}[{\bfseries{E\arabic{enumi}}}]
    \item\label{E1} $T_j\in\mc{H}_{\mr{IA}}$ for all $j\in J_1$;
    \item\label{E2} $T_j\in\bigcup_{i=1}^{\ell_1}\mc{H}_i'$ for all $j\in J_2$;
    \item\label{E3} For all $j\in J_3$, there is a copy of $F_j \simeq \mc{F}_{2m}$, consisting of one triangle $S_j \in \mc{H}_{\mr{Rand}}$ and $2m-1$ triangles of $\bigcup_{i=1}^{\ell_1}\mc{H}_i'$ such that $T_j$ is in the associated ``absorber-flip'' $\mc{F}_{2m}^\ast$;
    \item\label{E4} $F_j$ is edge-disjoint from $\{T_{j'}: j'\in J_2\}$;
    \item\label{E5} Every pair $F_j,F_{j'}$ for distinct $j,j'\in J_3$ is edge-disjoint or identical.
\end{enumerate}
The above analysis shows that the union of the $3^t$ events $\mc{E}_{J_1,J_2,J_3}$ covers all possible situations where $\{T_1,\ldots,T_t\}\subseteq\mc{S}$. Thus, we have
\[\mb{P}[\{T_1,\ldots,T_t\}\subseteq\mc{S}]\le\sum_{J_1,J_2,J_3}\mb{P}[\mc{E}_{J_1,J_2,J_3}|\mc{E}_{\mr{IA}}\wedge\mc{E}_{\mr{Ind}}\wedge\mc{E}_{\mr{Abs}}]\le 4\cdot 3^t\max_{J_1,J_2,J_3}\mb{P}[\mc{E}_{J_1,J_2,J_3}]\]
by \cref{clm:success} and Bayes' theorem. Furthermore, the event $\mc{E}_{J_1,J_2,J_3}$ does not depend directly on $\mc S$, but rather on an independent model of triangles. Thus, we can essentially disregard the complicated process \cref{Alg1,Alg2,Alg3,Alg4,Alg5,Alg6,Alg7,Alg8} in favor of this substantially simpler situation.

We will reduce the situation further to studying the triangles in $J_3$. Given $J_3\subseteq[t]$, let $\mc{E}_{J_3}$ be the event that \cref{E3,E5} hold. We see that
\[
\mb{P}[\mc{E}_{J_1,J_2,J_3}]\le\bigg(\prod_{j\in J_1}\mb{P}[T_j\in\mc{H}_{\mr{IA}}]\prod_{j\in J_2}\mb{P}\bigg[T_j\in\bigcup_{i=1}^{\ell_1}\mc{H}_i'\bigg]\bigg)\mb{P}[\mc{E}_{J_3}].
\]
Indeed, the first term can be extracted due to independence of $\mc{H}_{\mr{IA}}$ and conditions \cref{E2,E3,E4,E5}. Additionally, careful scrutiny shows that in fact \cref{E2} is independent from the event that events \cref{E3,E4,E5} hold by construction.

By definition, we have $\mb{P}[T_j\in\mc{H}_{\mr{IA}}] = (\log|X|)^{C_{\ref{prop:IA-random-set}}}/|X|$ and $\mb{P}[T_j\in\bigcup_{i=1}^{\ell_1}\mc{H}_i']\le 1/|V(G_i')| \le 1/|X|$. Since $|X| = n^{1-\gamma_k}\exp((\log n)^{1/3})$, these terms are bounded by $n^{\eta_k-1}/(\log n)^2$. Putting everything together, we find
\begin{equation}\label{eq:flip-spread-reduction}
\mb{P}[\{T_1,\ldots,T_t\}\subseteq\mc{S}]\le 4\cdot 3^t\max_{J\subseteq[t]}(n^{\eta_k-1}/(\log n)^2)^{t-|J|}\mb{P}[\mc{E}_J].
\end{equation}

\textbf{Step 2: Preliminaries for understanding $\mc{H}_{\mr{Flip}}$.}
Now we analyze the absorber-flips. Call a copy of $\mc{F}_{2m}$ with a distinguished triangle a \emph{rooted absorber}, and call any nonempty subset of triangles $\mc{P}\subseteq\mc{F}_{2m}^\ast$ within the flip of a rooted absorber a \emph{polymer}. If we additionally distinguish a triangle of a polymer we call it a \emph{rooted polymer}. We see that there are $2^{2m}-1$ (labeled) polymers. At a high level, we wish to count the number of ways to break $\{T_j: j\in J\}$ into polymers, then count extensions to a full $\mc{F}_{2m}^\ast$, and then consider the probability that the corresponding $\mc{F}_{2m}$ is in $\mc{H}_{\mr{Rand}}\cup\bigcup_{i=1}^{\ell_1}\mc{H}_i'$.

Let
\[p(s) = \max_{T_1,\ldots,T_t}\max_{J\in\binom{[t]}{s}}\mb{P}[\mc{E}_J],\]
where the maximum is over edge-disjoint collections of triangles.

Given a triangle $T$ of $G_i'$ for some $i$ and an edge-disjoint triangle set $\mc{T}$, both within $K_n$, and given a rooted polymer $\mc{P}$, let $f(\mc{P},T,\mc{T})$ be the number of ways to extend $T$ to a copy of $\mc{P}$ within $G_i'$ where $T$ is the root of the polymer, and the other triangles are all in $\mc{T}$. We claim that
\begin{equation}\label{eq:fPTT bound}
f(\mc P, T, \mc T) \leq
\begin{cases}
(2\ell_2)^{v(\mc{P})-e(\mc{P})-2} & e(\mc{P})\le 2m-1\\
(2\ell_2) & e(\mc{P}) = 2m.
\end{cases}
\end{equation}
(Here, $v(\mc{P})$ is the number of vertices incident to triangles in $\mc P$.)

We first reduce to the case where $e(\mc P) \leq 2m-1$. Indeed, if $e(\mc P) = 2m$ let $\mc P'$ be a polymer obtained by removing a non-root triangle from $\mc P$. It then holds that $f(\mc P,T,\mc T) \leq f(\mc P',T,\mc T)$ and $v(\mc{P}')-e(\mc{P}')-2 = 2m+2-(2m-1)-2 = 1$. Thus, \cref{eq:fPTT bound} with $e(\mc P) = 2m$ follows from \cref{eq:fPTT bound} with $e(\mc P) = 2m-1$.

We prove \cref{eq:fPTT bound} when $e(\mc{P})\le 2m-1$ by induction on $e(\mc{P})$. When $e(\mc{P})=1$, the corresponding polymer consists only of the root triangle and hence must be $\{T\}$; the result follows. Now assume that \cref{eq:fPTT bound} holds for all $e(\mc{P}) < k$ where $2\le k\le 2m-1$. Let $\mc{P}$ be a polymer with $e(\mc P) = k$. Now fix a rooted polymer $\mc{P}' \subseteq \mc P$ such that $e(\mc{P}\setminus\mc{P}') = 1$ and $v(\mc{P})-v(\mc{P}')\ge 1$ (this is possible since $2\le e(\mc{P})\le 2m-1$: any such polymer covers a proper subgraph of the cycle $C_{2m}$, and we can remove a non-root triangle containing a degree $1$ vertex from $\mc P$). By the inductive hypothesis
\[f(\mc{P}',T,\mc{T})\le(2\ell_2)^{v(\mc{P}')-e(\mc{P}')-2}.\]
We now consider the possible ways to extend a copy of $\mc P'$ to a copy of $\mc P$, using only triangles from $\mc T$. There are three cases. If $v(\mc{P})-v(\mc{P}')=3$, note that since the triangles in $\mc T$ are edge-disjoint there are at most $|\mc T| \leq |V(G_i')|^2 \leq (2\ell_2)^2$ possible extensions. If $v(\mc{P})-v(\mc{P}')=2$, there are at most $(2\ell_2)$ possible triangles in $\mc{T}$ within $G_i'$ which could be in $\mc{P}\setminus\mc{P}'$ as this triangle must contain a fixed vertex (given $\mc{P}'$). Finally, if $v(\mc{P})-v(\mc{P}')=1$, note that there is at most $1$ triangle in $\mc{T}$ within $G_i'$ which could be in $\mc{P}\setminus\mc{P}'$ as this triangle must contain a fixed edge (given $\mc{P}'$) and the triangles in $\mc{T}$ are edge disjoint. This completes the inductive proof.

Next, given a polymer $\mc{P}$ within some $G_i'$, let $g(\mc{P})$ be the number of ways to extend it to a full copy of $\mc{F}_{2m}^\ast$ within $G_i'$ with its flip having the property that at least one triangle is fully within $X$. We claim that
\[g(\mc{P})\le(2\ell_2)^{2m-v(\mc{P})+2}\]
and that if $e(\mc{P}) = 1$ then $g(\mc{P})\le|X|(2\ell_2)^{2m-2}$. To see this note that given the polymer there are at most $2m-v(\mc{P})+2$ labelled vertices to be specified and at most $(2\ell_2)$ choices for each vertex. Additionally, when $e(\mc{P}) = 1$, at least one vertex remaining to be chosen must be in $X$ (since the flip of $\mc{F}_{2m}^\ast$ has at least one triangle fully in $X$, while $\mc F_{wm}^\ast$ does not), improving the bound to $|X|\cdot (2\ell_2)^{2m+2-3-1} = |X|(2\ell_2)^{2m-2}$ as desired.

\textbf{Step 3: Spread of $\mc{H}_{\mr{Flip}}$.}
Finally, we bound $p(s)$. Suppose that $J \in \binom{[t]}{s}$. For each $j\in J$ in increasing order, there are at most $2^{2m}-1$ ways to choose which polymer type $\mc{P}$ the set $\{T_{j'}: j'\in J,~F_j = F_{j'}\}$ creates. Given $\mc P$, there are at most $f(\mc{P},T_j,\{T_{j'}: j'\in J\})$ ways to choose how $T_j$ actually extends to that polymer within $\{T_{j'}: j'\in J\}$. There are then at most $g(\mc{P})$ ways to count the number of extensions to a full $\mc{F}_{2m}^\ast$. Given these choices, the probability that the flip of this $\mc F_{2m}^\ast$ is contained in $\mc{H}_{\mr{Rand}} \cup \bigcup_{i=1}^{\ell_1}\mc{H}_i'$ is at most $|X|^{\eta_{k-1}-1}(1/\ell_2)^{2m-1}$.

It follows that
\begin{align*}
p(s)&\le\sum_{s'=1}^{2m}\binom{2m}{s'}\Big(\max_{\substack{e(\mc{P})=s'\\T,\mc{T}}}f(\mc{P},T,\mc{T})g(\mc{P})\cdot|X|^{\eta_{k-1}-1}(1/\ell_2)^{2m-1}\Big)p(s-s'),\\
&\le\max_{\substack{s'\in[2m]\\e(\mc{P})=s'\\T,\mc{T}}}(2m)^{s'}f(\mc{P},T,\mc{T})g(\mc{P})\cdot|X|^{\eta_{k-1}-1}(1/\ell_2)^{2m-1}p(s-s')
\end{align*}
where we let $p(s) = 0$ for $s < 0$ and $p(0) = 1$.

When $e(\mc{P}) = s'\in\{2,\ldots,2m-1\}$ we have
\[f(\mc{P},T,\mc{T})g(\mc{P})\cdot|X|^{\eta_{k-1}-1}(1/\ell_2)^{2m-1}\le(2\ell_2)^{1-s'}\cdot|X|^{\eta_{k-1}-1}\le(n^{\eta_k-1}/(\log n)^3)^{s'}.\]
The last inequality is true since it holds for $s' = 2$ and since $2\ell_2 \geq n^{1-\eta_k}(\log n)^3$.

When $e(\mc{P}) = 2m$ we have
\[f(\mc{P},T,\mc{T})g(\mc{P})\cdot|X|^{\eta_{k-1}-1}(1/\ell_2)^{2m-1}\le 2\ell_2^{2-2m}\cdot|X|^{\eta_{k-1}-1}\le(n^{\eta_k-1}/(\log n)^3)^{2m}.\]
The inequality holds as $|X|^{\eta_{k-1}-1}\le 1$ and $2\ell_2^{(2-2m)/(2m)}\le\exp(\sqrt{\log n})/(2\ell_2)\le n^{\eta_{k}-1}/(\log n)^3$ as $n^{\Omega(1/k^2)}\ge\exp(O(m))$.

When $e(\mc{P}) = 1$ we have 
\begin{align*}
f(\mc{P},T,\mc{T})g(\mc{P})&\cdot|X|^{\eta_{k-1}-1}(1/\ell_2)^{2m-1} \le |X|^{\eta_{k-1}}2^{2m-2}/\ell_2 \\
&\le (4e)^{m}/(n^{1/2}|X|^{(1-\eta_{k-1})/2})\le n^{\eta_k-1}/(\log n)^3
\end{align*}
where the final inequality follows as $n^{\eta_k-1/2}|X|^{(1-\eta_{k-1})/2} = n^{\Omega(1/k^2)}\ge\exp(O(m))$.

Putting this together, we obtain
\[p(s)\le\max_{s'\in[2m]}(4mn^{\eta_k-1}/(\log n)^3)^{s'}p(s-s').\]
Along with the initial conditions, this immediately yields $p(s)\le(4mn^{\eta_k-1}/(\log n)^3)^s$. Finally, combining with \cref{eq:flip-spread-reduction} yields
\[\mb{P}[\{T_1,\ldots,T_t\}\subseteq\mc{S}]\le(O(n^{\eta_k-1}/(\log n)^2))^t,\]
as desired.
\end{proof}

\cref{clm:decomp,clm:spread} imply that $\mu$ is an $O(n^{\eta_k-1/(\log n)^2})$-spread distribution on triangle-decompositions of $G$. Applying \cref{thm:FKNP} to $\mu$ yields \cref{thm:boosting}$(\eta=\eta_k)$, completing the induction.
\end{proof}

\section{Iterative Absorption in Random Hypergraphs}\label{sec:IA}

In this section we use the machinery of iterative absorption to prove \cref{prop:IA-random-set}. Informally, it states that given a nearly complete graph $G$ and a specified set $X \subseteq V(G)$, one can use edge-disjoint triangles to cover all edges in $G \setminus G[X]$ while only covering a small fraction of edges in $G[X]$. Moreover, the triangles can be restricted to a sparse random set. It is proved via iterating the following lemma.

\begin{lemma}\label{lem:IA-iteration-simplified}
There exists a constant $C_{\ref{lem:IA-iteration-simplified}} >0$ such that the following holds. Let $n \in \mb N$. Fix a subset $V_1 \subseteq V(K_n)$ such that $|V_1|\in (n/(\log n)^{4}, n/(\log n)^{2})$. Furthermore fix $G \subseteq K_n$ such that $\Delta(G^{c})\le 2n/\log n$, and $|N(v)^{c}\cap V_1|\le 2|V_1|/\log |V_1|$ for every $v\in V(G)$, and for every $v\notin V_1$ we have $\deg_G(v) \equiv 0 \pmod 2$. 

Let $\mc H'\sim\mb{G}^{(3)}(n, (\log n)^{C_{\ref{lem:IA-iteration-simplified}}}/n)$. Then there exists an edge-disjoint triangle set $\mc H \subseteq \mc{H}'$, with $G^{\ast} \coloneqq E(\mc H)$, such that:
\begin{enumerate}
    \item $G^{\ast}[V_1]$ is stochastically dominated by sampling every edge independently with probability $(\log |V_1|)^{-20}$,
    
    \item $G\setminus G[V_1]\subseteq G^{\ast}$ (i.e., $\mc H$ covers all edges of $G$ outside of $V_1$) with probability $1-n^{-\omega(1)}$,
    
    \item $G^{\ast}\subseteq G$ (i.e., $\mc H$ consists of triangles in $G$).
\end{enumerate}

\end{lemma}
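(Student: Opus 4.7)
The plan is to adapt the iterative cover-down framework of Barber, Glock, K\"uhn, Lo, Montgomery, and Osthus \cite{BGKLMO20} to the sparse random setting, using the polylogarithmic density of $\mc H'$ as the only slack. Because the density is only $(\log n)^C/n$, every triangle must be carefully accounted for; in particular, the restriction to triangles within $\mc H'$ means we cannot simply apply the deterministic cover-down lemma in black-box fashion.

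First I would partition $\mc H'$ into three independent binomial sub-hypergraphs $\mc H_{\mr{nib}} \sqcup \mc H_{\mr{cov}} \sqcup \mc H_{\mr{clean}}$ of polylogarithmic density (each triangle of $\mc H'$ is placed independently into one of the three). On $\mc H_{\mr{nib}}$, I would run a round-based, sparse R\"odl--Spencer style greedy triangle packing, restricted to triangles whose vertex set meets $V_1$ in at most one vertex. Standard nibble analysis (applying \cref{lem:chernoff} to track the number of available candidate triangles at each surviving edge, and \cref{lem:binomial-comparison} to exploit near-independence across rounds) yields that after $\Theta(\log\log n)$ rounds all but a $(\log n)^{-\omega(1)}$ fraction of edges in $G\setminus G[V_1]$ have been covered and the residue graph $R$ is quasi-random with very small maximum degree. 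Crucially, since no triangle used in this phase has two or three vertices in $V_1$, Phase 1 contributes nothing to $G^*[V_1]$.

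Next, I would use $\mc H_{\mr{cov}}$ to patch the residual edges of $R$ outside $V_1$ and most $V_1$-crossing residual edges. For each such residual edge $e$ we look for a triangle $e\cup\{w\} \in \mc H_{\mr{cov}}$ with $w\notin V_1$, chosen uniformly among the $\Omega((\log n)^{C'})$ valid options, subject to edge-disjointness with previously assigned absorbers. A direct application of the Lov\'asz Local Lemma (\cref{lem:lll}) yields a simultaneous edge-disjoint assignment, because conflicts are sparse by quasi-randomness of $R$ and by $\Delta(G^c)\le 2n/\log n$. The even-degree hypothesis outside $V_1$ is exactly what guarantees a valid parity so that a small residual patching phase using $\mc H_{\mr{clean}}$ (possibly with a vertex in $V_1$) can close the last few edges without obstruction. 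The only contributions to $G^*[V_1]$ come from this final patching, and there are very few such patches needed because the Phase-2 failure rate is $(\log n)^{-\omega(1)}$.

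The main obstacle is condition (1): stochastic domination is a \emph{conditional} statement, not merely a marginal one. The plan is to reveal the edges that $G^*$ places into $\binom{V_1}{2}$ in some fixed order, and at each step to use the uniform randomization of the Phase-2/3 patching, together with the polylogarithmic abundance of alternative triangle choices for each patched edge and the smallness of the residual graph after Phase 1, to bound the conditional probability that any specific new edge is contributed to $G^*[V_1]$ by $(\log|V_1|)^{-20}$. Feeding this conditional bound into \cref{lem:binomial-comparison} yields the desired stochastic domination. Properties (2) and (3) then follow from the high-probability coverage of Phases 1--3 and the deterministic validity of the assignments. Establishing the conditional bound uniformly over history is the delicate part of the argument; the hypotheses on $\Delta(G^c)$ and on $|N(v)^c\cap V_1|$ are used here to ensure that the number of valid continuations at each step is large enough to dilute any specific $V_1$-edge's probability to the required level.
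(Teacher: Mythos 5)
Your overall philosophy (nibble on non-$V_1$ triangles, cover down the residual, control $G^*[V_1]$ only through the final stage) matches the paper, but there is a structural ingredient missing that your plan cannot do without: the paper sets aside a sparse \emph{reservoir} $R$ of crossing edges (of density $q=(\log|V_1|)^{-30}$ between $V_1$ and its complement, satisfying a list of codegree regularity conditions) \emph{before} running the nibble. The nibble is only run on $G_1 = G\setminus(R\cup G[V_1])$. It is the reservoir edges, not the nibble residual, that provide the supply of triangles needed to patch leftover internal edges: an internal leftover $\{u,v\}$ is covered by a triangle $\{u,v,w\}$ with $w\in V_1$ and both $\{u,w\},\{v,w\}\in R$, of which there are $\Theta(q^2|V_1|)\cdot\mr{polylog}$ available in the random sample. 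In your Phase 2 you propose to patch a residual edge $e$ using a triangle $e\cup\{w\}$ with $w\notin V_1$; but then both $\{u,w\}$ and $\{v,w\}$ must themselves be uncovered, which after the nibble means they both lie in a residual of density $\mr{polylog}^{-1}$ (at best). The expected number of valid $w$ for which $e\cup\{w\}\in\mc H_{\mr{cov}}$ is then of order $n\cdot\mr{polylog}^{-2}\cdot(\log n)^{C'}/n$, which cannot be made large uniformly without either a reservoir or pushing the nibble residual density far below $n^{-1}\cdot(\log n)^{C'}$. Your $(\log n)^{-\omega(1)}$ residual claim for Phase 1 is also too optimistic for a sparse nibble: the Ehard--Glock--Joos-style argument the paper uses gives a $\Delta^{-\eps}$ leave, i.e.~a fixed polylog power, and the exact exponent matters in exactly this accounting.

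A second, smaller issue is the stochastic domination. The paper proves property (1) cleanly by observing that the only $V_1$-internal edges touched by $\mc H$ arise from triangles of the form $\{u,w_1,w_2\}$ with $u\notin V_1$ and $\{w_1,w_2\}\subseteq V_1$ chosen in a bipartite matching phase; distinct $V_1$-edges have disjoint sets of candidate extensions into triangles of $\mc H_5$ (since each such triangle is determined by its $V_1$-edge plus its third vertex), so the events ``$\{w_1,w_2\}$ lies in a triangle of $\mc H_5$'' are \emph{mutually independent} with marginal probability at most $(\log n)^{-25}$, which directly yields stochastic domination of $E(\mc H_5)[V_1]\supseteq G^*[V_1]$ by an i.i.d.\ Bernoulli sample. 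Your plan instead falls back on revealing $V_1$-edges in order and verifying a conditional bound via \cref{lem:binomial-comparison}; besides the fact that \cref{lem:binomial-comparison} only gives a tail bound on the count (not the set-domination the lemma asks for), you yourself flag that establishing the conditional bound uniformly over history is the delicate step, and it is not clear how you would carry it through without the structural independence that the paper engineers. In short: you need to (a) introduce a reservoir before the nibble, (b) cover internal residuals using reservoir-incident triangles, (c) cover remaining crossing edges via link-graph bipartite matchings, and (d) derive condition (1) from the disjointness of extension triangle sets, not from a conditional revelation argument.
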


Before proving \cref{lem:IA-iteration-simplified} we show how it implies \cref{prop:IA-random-set}.

\begin{proof}[Proof of \cref{prop:IA-random-set} given \cref{lem:IA-iteration-simplified}]
Let $n = t_0 > t_1 > \cdots > t_\ell = |X|$ be a sequence of integers such that $t_{i+1} \in (t_i/(\log t_i)^{4},t_i/(\log t_i)^{2})$ for every $0 \leq i < \ell$. Observe that $\ell = O(\log n)$. Sample a uniformly random descending sequence of sets $V(K_n) = V_0 \supseteq V_1 \supseteq V_2\supseteq\cdots \supseteq V_{\ell} = X$ such that $|V_i| = t_i$ for every $i$. We call this sequence of sets the \textit{vortex}.

We now consider the respective degrees from each vertex set into the next. By applying the Chernoff bound (\cref{lem:chernoff}), a union bound, and the assumed upper bound on $|X\setminus N_G(v)|$, with probability at least $0.99$ (over the random choice of the vortex), we have that every vertex $v\in V_i$ has $|N(v)^c\cap V_{i+1}|\le 3|V_{i+1}|/(2\log |V_{i+1}|)$ and $\Delta(K_n[V_i]\setminus G[V_i])\le 3|V_i|/(2\log |V_i|)$ for $0 \leq i < \ell$. We assume these conditions hold.

We now apply \cref{lem:IA-iteration-simplified} inductively. Suppose that for some $0 \leq i < \ell$ we have already applied \cref{lem:IA-iteration-simplified} $i$ times, leaving the graph $L_i \subseteq G[V_i]$ of uncovered edges. In order to simplify the analysis we will not apply \cref{lem:IA-iteration-simplified} to $L_i$ directly. Rather, we will apply it to the graph $G_{i}^{\mr{curr}}$, defined as follows: If $i=\ell-1$ then $G_{i}^{\mr{curr}} = L_i$. Otherwise let $G_{i}^{\mr{curr}} = L_i \setminus G[V_{i+2}]$. Applying \cref{lem:IA-iteration-simplified} in this way implies that an edge in $G[V_i]$ can only be covered in the $i$-th or $(i-1)$-th stage of the algorithm (but not before).

We next note that as $|V_{i+2}|\le |V_i|/(\log |V_i|)^{5/4}$ we see that $G_i' \coloneqq G[V_i]\setminus G[V_{i+2}]$ has the property that for all $v\in V_i$, we have $|N_{G_i'}(v)^c\cap V_i|\le 7|V_i|/(4\log |V_i|)$ and $|N_{G_i'}(v)^c\cap V_{i+1}|\le 7|V_{i+1}|/(4\log |V_{i+1}|)$. Hence $G_{i}^{\mr{curr}}$ satisfies the necessary conditions for \cref{lem:IA-iteration-simplified} with high probability. Indeed, by the inductive assumption, after $i$ steps of the process $L_i$ is stochastically dominated by sampling every edge independently with probability $(\log |V_i|)^{-20}$. Thus, by Chernoff's inequality and a union bound, the two minimum degree assumptions hold w.h.p. Moreover, $L_i$ is obtained from the triangle-divisible graph $G$ by removing a set of edge-disjoint triangles. Therefore all degrees in $L_i$ are even. Since $G_i^{\mr{curr}}$ is obtained from $L_i$ by removing only edges spanned by $V_{i+2}$, the degrees of all vertices in $V_i \setminus V_{i+1}$ in $G_i^{\mr{curr}}$ are even as well. Therefore we can apply \cref{lem:IA-iteration-simplified} to $G_i^{\mr{curr}}$ and continue the process. Assuming $C_{\ref{prop:IA-random-set}}$ is sufficiently large, the failure probability in stage $i$ is less than $|V_i|^{-2}$. Thus, the total failure probability is less than
\[\sum_{i\ge 0}|V_i|^{-2}\leq \sum_{k=C_{\ref{prop:IA-random-set}}}^\infty k^{-2} \le 1/8.\]

Finally, note that in this procedure no edges in $G[X]$ are covered until the final step. Therefore the degree bound on $G[X]$ follows by noting that $G^{\ast}[X]$ is stochastically dominated by sampling every edge with probability $(\log |X|)^{-20}$.
\end{proof}

\subsection{Fractional matching}\label{sub:fractional-matching}
In order to find the existence of fractional matchings within a sparse set of triangles we will use the following result of Barber, Glock,  K\"uhn, Lo, Montgomery, and Osthus \cite{BGKLMO20}. 

\begin{lemma}\label{lem:fractional-matching}
There exists an $\varepsilon = \varepsilon_0>0$ such that the following holds. Given a graph $G$ on $n$ vertices with minimum degree at least $(1-\varepsilon)n$, let $\mc{T}$ denote the set of triangles in $G$, and, for $e \in E(G)$, let $\mc{T}(e)$ be the set of edges containing $e$. There exists a function $\gamma:\mc{T}\to [0,1]$ such that $\sum_{T\in \mc{T}(e)}\gamma(T) = n/8$ for every $e$.
\end{lemma}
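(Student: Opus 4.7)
The plan is to construct $\gamma$ via a near-uniform base weighting together with a local correction. Begin by setting $\gamma_0(T) = 1/8$ for every triangle $T$ of $G$. For each edge $e = uv \in E(G)$ this yields
\[
\sum_{T \in \mc T(e)} \gamma_0(T) = c(e)/8,
\]
where $c(e) := |N_G(u) \cap N_G(v)|$ denotes the codegree. The minimum-degree hypothesis forces $c(e) \in [(1-2\varepsilon)n, n-2]$, so every edge carries a small nonnegative deficit $d(e) := n/8 - c(e)/8 = O(\varepsilon n)$.

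To resolve these deficits, I would use two local adjustment gadgets. A \emph{triangle adjustment} on $T$ changes $\gamma(T)$ by $\alpha$ and uniformly shifts the edge-sums of all three edges of $T$ by $\alpha$, useful for raising total weight. A \emph{$K_4$-gadget} on four vertices $\{a,b,c,d\}$ spanning a $K_4$ in $G$ sets $\delta(abc) = \delta(abd) = +\alpha$ and $\delta(acd) = \delta(bcd) = -\alpha$; a direct check shows this shifts weight $2\alpha$ from edge $cd$ to edge $ab$ while leaving all other edge-sums unchanged. Since the minimum-degree hypothesis guarantees every edge lies in $\Theta(n)$ triangles and every pair of edges lies in $\Theta(n^2)$ common $K_4$s, there is enormous flexibility in combining these gadgets.

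With this setup, the existence of a correction $\delta$ satisfying $\sum_{T \in \mc T(e)} \delta(T) = d(e)$ for all $e$ reduces to an LP feasibility problem on $\mc T$. Feasibility follows from a standard Hall-type/LP-duality argument: any dual certificate of infeasibility would contradict the codegree lower bound and the abundance of $K_4$s available to route weight between any two prescribed edges.

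The main obstacle, and the reason $\varepsilon$ must be taken small, is ensuring the additional constraint $\gamma_0 + \delta \in [0,1]$. This is handled by spreading the per-edge deficit $d(e) = O(\varepsilon n)$ evenly across the $\Theta(n^2)$ gadgets available through $e$, so each individual gadget carries weight of order $O(\varepsilon / n)$. Summing contributions through the three edges of any single triangle yields a cumulative adjustment of $O(\varepsilon)$, which keeps $\gamma(T)$ safely in $[0,1]$ provided $\varepsilon = \varepsilon_0$ is a sufficiently small absolute constant. This spreading step can be performed either probabilistically (random choice of $K_4$-gadgets for each deficit) or deterministically via uniform assignment over all available $K_4$s, and is the most delicate part of the argument.
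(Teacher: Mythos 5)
The paper does not prove this lemma at all: the remark immediately following it cites it to \cite[Lemma~4.2]{BGKLMO20}, simply rescaling their function $\psi$ by $1/(2p^2)$ where $p$ is the edge density. Your proposal is therefore a genuinely different route, namely an attempt at a self-contained proof of what the paper treats as a black box.

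The general strategy---near-uniform base weighting $\gamma_0\equiv 1/8$ plus a small correction $\delta$ solving $\sum_{T\ni e}\delta(T)=d(e)$---is sensible, and your computations of the deficits $d(e)=(n-c(e))/8=O(\varepsilon n)$ and the action of the $K_4$-gadget are correct. But there are two genuine gaps. First, the $K_4$-gadget conserves $\sum_T \delta(T)$ (it is $\alpha+\alpha-\alpha-\alpha=0$) and hence also conserves the total edge-sum $\sum_e\sum_{T\ni e}\delta(T)=3\sum_T\delta(T)$. Under $\gamma_0$ \emph{every} edge is below target and the aggregate shortfall $\sum_e d(e)=(|E(G)|n-3|\mc{T}|)/8$ is strictly positive, so a correction built from $K_4$-gadgets alone cannot exist. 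You list the triangle adjustment as the gadget that raises total weight, but your spreading argument in the final paragraph invokes only $K_4$-gadgets, so as written it is structurally incapable of closing the deficits; at minimum you must explain how the two gadgets are combined (e.g.\ a uniform overshoot via triangle adjustments followed by $K_4$-rebalancing). Second, both the LP feasibility and the bound $\max_T|\delta(T)|=O(\varepsilon)$ are asserted rather than argued. The substance of the lemma is precisely that the triangle--edge incidence operator is surjective with an effective inverse taking $O(\varepsilon n)$-sized deficits to $O(\varepsilon)$-sized per-triangle corrections; ``any dual certificate of infeasibility would contradict the codegree lower bound'' names the tool but does not identify what a dual certificate looks like or why it is impossible. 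Neither gap is necessarily fatal to the strategy, but both must be filled before this is a proof rather than a plausible plan.
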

\begin{remark}
This follows from \cite[Lemma~4.2]{BGKLMO20}, noting that any sufficiently dense graph is regular in the appropriate sense and letting $\gamma(\cdot) = \psi(\cdot) \cdot 1/(2p^2)$ where $\psi(\cdot)$ is defined as in \cite[Lemma~4.2]{BGKLMO20} and $p$ is the density of $G$.
\end{remark}

The crucial tool for our setting is that given such a fractional matching, one can subsample every triangle with weight proportional to $\gamma$ and obtain a nearly perfect fractional matching inside the sampled hypergraph.
\begin{lemma}\label{lem:fractional-matching-sample}
There exists an $\varepsilon = \varepsilon_0>0$ such that the following holds. Given a graph $G$ on $n$ vertices with minimum degree at least $(1-\varepsilon)n$ let $\mc{T}$ denote the set of triangles in $G$. Sample every triangle in $G$ with probability $p$ with $p\ge (\log n)^2/n$ and call this collection $\mc{H}$. Then with probability $1-n^{-\omega(1)}$, there exists a triangle set $\mc{H}_1\subseteq \mc{H}$ such that every edge is contained in $pn/8\pm\sqrt{pn}\log n$ triangles of $\mc{H}_1$.
\end{lemma}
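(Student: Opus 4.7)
The approach is to apply \cref{lem:fractional-matching} to obtain weights $\gamma$, and then to realize the candidate $\mc{H}_1$ by an independent secondary thinning: retain each triangle $T \in \mc{H}$ with probability $\gamma(T)$. By coupling this with the original Bernoulli($p$) sample we get a single joint model in which, for each fixed edge, the relevant triangle count is a sum of independent indicators with mean exactly $pn/8$, and Chernoff plus a union bound over edges finishes the argument.

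Concretely, first I would invoke \cref{lem:fractional-matching} (the hypothesis $\delta(G)\ge(1-\varepsilon_0)n$ matches) to produce $\gamma:\mc{T}\to[0,1]$ with $\sum_{T\in\mc{T}(e)}\gamma(T)=n/8$ for every $e\in E(G)$. Next, for each $T\in\mc{T}$ draw an independent $U_T\sim\on{Unif}[0,1]$, place $T$ in $\mc{H}$ iff $U_T\le p$, and place $T$ in the auxiliary set $\mc{H}_1'$ iff $U_T\le p\gamma(T)$. This couples $\mc{H}_1'\subseteq\mc{H}$ and keeps each $T$ in $\mc{H}_1'$ independently with probability $p\gamma(T)$. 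For a fixed edge $e$, the count $X_e=|\{T\in\mc{H}_1':e\in T\}|$ is a sum of independent $\{0,1\}$-valued random variables with
\[
\mb{E}X_e=p\sum_{T\in\mc{T}(e)}\gamma(T)=\frac{pn}{8}.
\]

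Since $p\ge(\log n)^2/n$, we have $\mb{E}X_e\ge(\log n)^2/8$, so setting the deviation to $\sqrt{pn}\log n$ corresponds to a relative deviation $\delta=\Theta(\log n/\sqrt{\mb{E}X_e})$ that is bounded. Applying \cref{lem:chernoff} therefore yields
\[
\mb{P}\bigl[|X_e-pn/8|>\sqrt{pn}\log n\bigr]\le 2\exp\bigl(-c(\log n)^2\bigr)=n^{-\omega(1)}
\]
for an absolute constant $c>0$. Taking a union bound over the $\binom{n}{2}$ choices of $e$ shows that $\mc{H}_1'$ satisfies the desired edge-count property with joint probability $1-n^{-\omega(1)}$.

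To extract the statement, let $f(\mc{H})$ be the conditional probability (over the inner Uniforms, given the outer sample) that $\mc{H}_1'$ satisfies the property. Then $\mb{E}f(\mc{H})\ge 1-n^{-\omega(1)}$, so $\mb{P}[f(\mc{H})=0]\le n^{-\omega(1)}$; on the complementary event some realization $\mc{H}_1\subseteq\mc{H}$ works. There is no substantive obstacle: the coupling is the standard two-stage Bernoulli split, and the only thing to verify is that the $p\ge(\log n)^2/n$ hypothesis makes the Chernoff tail beat a polynomial union bound, which it does comfortably.
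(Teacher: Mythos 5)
Your proof is correct and is essentially identical to the paper's: both invoke \cref{lem:fractional-matching} to get $\gamma$, couple a Bernoulli$(p\gamma(T))$ thinning $\mc{H}_1 \subseteq \mc{H}$, and conclude by Chernoff plus a union bound over edges using $\mb{E}X_e = pn/8 \ge (\log n)^2/8$. The only difference is that you spell out the two-stage uniform coupling and the final extraction step more explicitly than the paper's one-line version.
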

\begin{proof}
Let $\gamma(\cdot)$ be as in \cref{lem:fractional-matching}. Let $\mc{H}_1$ be the random model where every triangle is sampled with probability $\gamma(T)\cdot p$ and note we can couple $\mc{H}_1\subseteq\mc{H}$. The result then follows immediately from the Chernoff bound, noting that the expected number of triangles containing a given edge $e$ is $\sum_{T\in\mc{T}(e)}p\gamma(T) = pn/8$. 
\end{proof}

\subsection{Covering process within regular triangle subset}\label{sub:nibble}
We now show that we can cover most of the edges of an almost-complete graph using a sparse random triangle set. We will first require a set of notions with regards to hypergraph matchings. For a hypergraph $\mc{H}$, define
\[\Delta(\mc{H}) :=\max_{v\in V(\mc{H})}\deg_\mc{H}(v),~\Delta^{\mr{co}}(\mc{H}) :=\max_{v_1,v_2\in V(\mc{H})} \on{codeg}_{\mc{H}}(v_1,v_2).\]
Call a function $\omega:E(\mc{H})\to \mb{R}_{\ge 0}$ a \textit{weight function}, and for $X \subseteq E(\mc{H})$ let $\omega(X) = \sum_{x\in X}\omega(x)$. We will require the following result of Ehard, Glock, and Joos \cite{EGJ20} which guarantees the existence of hypergraph matchings which are pseudorandom with respect to a collection of weight functions.

\begin{theorem}[{\cite[Theorem~1.2]{EGJ20}}]\label{thm:matching}
Suppose $\delta\in(0,1)$ and $r\in \mb{N}$ with $r\ge 2$, and let $\eps:=\delta/(50r^2)$. Then there exists $\Delta_0$ such that for all $\Delta\ge \Delta_0$ the following holds:
Let $\mc{H}$ be an $r$-uniform hypergraph with $\Delta(\mc{H})\leq \Delta$ and $\Delta^{\mr{co}}(\mc{H})\le \Delta^{1-\delta}$ as well as $e(\mc{H})\leq \exp(\Delta^{\eps^2})$. 
Suppose that $\mc{W}$ is a set of at most $\exp(\Delta^{\eps^2})$ weight functions on~$E(\mc{H})$.
Then, there exists a matching $\mc{M}$ in~$\mc{H}$ such that $\omega(\mc{M})=(1\pm \Delta^{-\eps}) \omega(E(\mc{H}))/\Delta$ for all $\omega \in \mc{W}$ with $\omega(E(\mc{H}))\ge \max_{e\in E(\mc{H})}\omega(e)\Delta^{1+\delta}$.
\end{theorem}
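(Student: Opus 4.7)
I would prove \cref{thm:matching} via the classical Rödl nibble (semi-random) method, carrying along \emph{all} weight functions at once through a strong concentration argument. Fix a parameter $p = \Delta^{-\gamma}$ with $\eps \ll \gamma \ll \delta$ and run the following process for $T = \lfloor p^{-1}\rfloor$ rounds. Initialize $\mc{H}^{(0)} = \mc{H}$ and $\mc{M} = \emptyset$; in round $i$, sample each edge of $\mc{H}^{(i)}$ independently with probability $p/\Delta(\mc{H}^{(i)})$ to form $\mc{S}^{(i)}$, declare $e \in \mc{S}^{(i)}$ \emph{successful} if no other member of $\mc{S}^{(i)}$ shares a vertex with $e$, add the successful edges $\mc{M}^{(i)}$ to $\mc{M}$, and let $\mc{H}^{(i+1)}$ consist of the edges of $\mc{H}^{(i)}$ disjoint from every successful edge. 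This automatically outputs a matching, so only the weight estimate is in question.

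\textbf{One-round tracking.} For each vertex $v$ lying in an edge $e \in \mc{H}^{(i)}$, the probability that $v$ is covered by a successful edge in round $i$ is approximately $p$: there are at most $\Delta(\mc{H}^{(i)})$ edges through $v$, each sampled with probability $p/\Delta(\mc{H}^{(i)})$ and successful with probability $\approx 1$. The codegree assumption $\Delta^{\text{co}}(\mc{H}) \leq \Delta^{1-\delta}$ makes the loss events at different vertices of $e$ nearly independent, so $e$ survives into $\mc{H}^{(i+1)}$ with probability $\approx (1-p)^r$, giving in expectation
\[\omega(\mc{H}^{(i+1)}) \approx (1-p)^r\,\omega(\mc{H}^{(i)}), \qquad \omega(\mc{M}^{(i)}) \approx \frac{p}{\Delta(\mc{H}^{(i)})}\,\omega(\mc{H}^{(i)}),\]
and the analogous estimate for $\deg_{\mc{H}^{(i+1)}}(v)$. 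The heart of the proof is to upgrade these expectations to concentration statements holding \emph{simultaneously} for every $\omega \in \mc{W}$ and every vertex degree, with failure probability at most $\exp(-\Delta^{3\eps^2})$ per round. I would apply a Freedman-type martingale inequality to the edge-exposure martingale of $\mc{S}^{(i)}$, using the bound $\omega(\mc{H}^{(i)}) \geq \max_e \omega(e)\,\Delta^{1+\delta - o(1)}$ (maintained inductively along the process) to control the quadratic variation, and then union-bound over the $T \cdot (|\mc{W}|+v(\mc{H})) = \exp(O(\Delta^{\eps^2}))$ relevant events.

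\textbf{Iteration and main obstacle.} Telescoping the one-round estimates over all $T$ rounds and using that $\omega(\mc{H}^{(i)})/\Delta(\mc{H}^{(i)})$ remains close to $\omega(E(\mc{H}))/\Delta$ (numerator and denominator both decaying like $(1-p)^{ri}$), one obtains
\[\omega(\mc{M}) \;\approx\; \sum_{i=0}^{T-1}\frac{p}{\Delta(\mc{H}^{(i)})}\,\omega(\mc{H}^{(i)}) \;\approx\; pT \cdot \frac{\omega(E(\mc{H}))}{\Delta} \;=\; (1 \pm \Delta^{-\eps})\frac{\omega(E(\mc{H}))}{\Delta},\]
as required. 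The main obstacle is the doubly stringent concentration: one needs failure probability $\exp(-\Delta^{\Omega(\eps^2)})$ for \emph{every} weight function, not just for a single one. A naive application of bounded-difference inequalities is too weak, and the argument crucially leverages the separation $\eps \ll \gamma \ll \delta$ together with the lower bound $\omega(E(\mc{H})) \geq \max_e \omega(e)\,\Delta^{1+\delta}$ to certify that each weighted sum is ``spread out'' enough across the random choices to admit sub-Gaussian-type tail bounds at the required scale --- this is precisely the technical innovation of \cite{EGJ20}.
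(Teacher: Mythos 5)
The paper does not contain a proof of \cref{thm:matching}: it is imported verbatim from Ehard, Glock, and Joos~\cite{EGJ20} and used as a black box, so there is no internal argument to compare your sketch against. With that caveat, your overall plan---a semi-random nibble that carries all weight functions through each round, with strong concentration certified by the lower bound $\omega(E(\mc H)) \geq \max_e\omega(e)\,\Delta^{1+\delta}$ and a union bound over $\exp(O(\Delta^{\eps^2}))$ events---is indeed the right template for this kind of statement.

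However, the telescoping step contains a concrete error. You assert that $\omega(\mc H^{(i)})$ and $\Delta(\mc H^{(i)})$ ``both decay like $(1-p)^{ri}$,'' so their ratio stays close to $\omega(E(\mc H))/\Delta$. In fact an edge survives a round only if all $r$ of its vertices avoid being covered, giving $\omega(\mc H^{(i+1)}) \approx (1-p)^{r}\omega(\mc H^{(i)})$; whereas for a still-uncovered vertex $v$, an incident edge $e$ survives iff the \emph{other} $r-1$ vertices of $e$ are uncovered, giving $\Delta(\mc H^{(i+1)}) \approx (1-p)^{r-1}\Delta(\mc H^{(i)})$. The ratio $\omega(\mc H^{(i)})/\Delta(\mc H^{(i)})$ therefore contracts by a factor of $(1-p)$ per round, and the correct telescoped sum is
\[
\omega(\mc M) \approx \sum_{i=0}^{T-1} p\,(1-p)^i\,\frac{\omega(E(\mc H))}{\Delta} \;=\; \bigl(1-(1-p)^T\bigr)\frac{\omega(E(\mc H))}{\Delta}.
\]
With your choice $T = \lfloor p^{-1}\rfloor$ this evaluates to roughly $(1-e^{-1})\,\omega(E(\mc H))/\Delta$, which is nowhere near the target $(1\pm\Delta^{-\eps})\omega(E(\mc H))/\Delta$. (One also sees this by counting: stopping after $\sim p^{-1}$ rounds leaves an $e^{-1}$-fraction of vertices uncovered, so the matching has only about a $1-e^{-1}$ fraction of the edges a near-perfect matching should have.) To repair this you must run on the order of $T \asymp \eps\,p^{-1}\log\Delta$ rounds so that $(1-p)^T \le \Delta^{-\eps}$, and then re-verify that the process stays in the regime where your concentration applies: with the corrected decay rates one has $\Delta(\mc H^{(T)})\gtrsim\Delta^{1-(r-1)\eps}$ and $\omega(\mc H^{(i)})\gtrsim\max_e\omega(e)\,\Delta^{1+\delta-r\eps}$ for all $i\le T$, and since $r\eps = \delta/(50r) \le \delta/100$ there is ample room, but these checks need to appear explicitly rather than being absorbed into ``remains close.''
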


This immediately implies the following lemma. We include the proof for completeness.

\begin{lemma}\label{lem:nibble-cover}
There exists $\varepsilon = \varepsilon_0>0$, $C = C_{\ref{lem:nibble-cover}}>0$ such that the following holds for sufficiently large $n$. Fix $p\in ((\log n)^{C}/n,1)$, a graph $G$ on $n$ vertices with minimum degree at least $(1-\varepsilon_0)n$, and let $\mc{H}_1$ be a collection of triangles in $G$ such that each edge is in $pn/8\pm\sqrt{pn}\log n$ triangles. Then there exists a set of edge disjoint triangles $\mc{H}_2 \subseteq \mc{H}_1$ such that $\Delta(G\setminus E(\mc{H}_2))\le n/(\log n)^{1000}$. 
\end{lemma}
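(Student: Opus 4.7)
The plan is to encode the conclusion as a near-perfect matching problem in an auxiliary hypergraph and then invoke the Ehard--Glock--Joos matching theorem (\cref{thm:matching}). Define the $3$-uniform hypergraph $\mc{H}^{\ast}$ on vertex set $E(G)$ whose hyperedges are the edge-triples $\{e_1,e_2,e_3\}$ of the triangles $T \in \mc{H}_1$. Matchings in $\mc{H}^{\ast}$ correspond exactly to edge-disjoint triangle subfamilies of $\mc{H}_1$, so it suffices to produce a matching $\mc{M}$ whose associated triangle set $\mc{H}_2$ leaves at most $n/(\log n)^{1000}$ uncovered edges at every vertex of $G$.

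Next I would check the hypotheses of \cref{thm:matching}. By the assumption on $\mc{H}_1$, $\Delta(\mc{H}^{\ast}) = \Delta := pn/8 \pm \sqrt{pn}\log n$, and since any two edges of $G$ lie in at most one common triangle, $\Delta^{\mr{co}}(\mc{H}^{\ast}) \le 1 \le \Delta^{1-\delta}$ for any $\delta\in(0,1)$. Fix $\delta = 1/2$, so that the permissible error exponent is $\varepsilon := \delta/(50\cdot 9) = 1/450$. For each $v \in V(G)$ introduce the weight function $\omega_v : E(\mc{H}^{\ast}) \to \{0,2\}$ with $\omega_v(T) := 2\cdot \mathbbm{1}[v \in V(T)]$; for any edge-disjoint triangle family $\mc{M}$ this quantity equals precisely the number of edges of $G$ incident to $v$ that are covered by $\mc{M}$. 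A double count against $\deg_{\mc{H}^{\ast}}$ yields
\[\omega_v(E(\mc{H}^{\ast})) \;=\; \sum_{e \ni v} \deg_{\mc{H}^{\ast}}(e) \;=\; \deg_G(v)\cdot (pn/8) \;\pm\; O(n\sqrt{pn}\log n),\]
which exceeds $\max_e \omega_v(e)\cdot \Delta^{1+\delta} = 2\Delta^{3/2}$ because $\deg_G(v) \ge (1-\varepsilon_0)n \gg \Delta^{1/2}$. The count $n$ of weight functions and the count $e(\mc{H}^{\ast}) \le n^3$ of hyperedges both fit under $\exp(\Delta^{\varepsilon^2})$ once $C_{\ref{lem:nibble-cover}}$ is sufficiently large (this uses only $\Delta \ge (\log n)^{C_{\ref{lem:nibble-cover}}}/8$).

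Applying \cref{thm:matching} then supplies a matching $\mc{M}$ with
\[\omega_v(\mc{M}) \;=\; (1 \pm \Delta^{-\varepsilon})\cdot \omega_v(E(\mc{H}^{\ast}))/\Delta \;=\; \deg_G(v) \;-\; O\!\left(\Delta^{-\varepsilon}\,n + \frac{n\log n}{\sqrt{pn}}\right)\]
for every $v \in V(G)$. Because $p \ge (\log n)^{C_{\ref{lem:nibble-cover}}}/n$, both error terms are bounded by $n/(\log n)^{1001}$ as soon as $C_{\ref{lem:nibble-cover}}\cdot \varepsilon \ge 1001$ and $C_{\ref{lem:nibble-cover}}/2 - 1 \ge 1001$. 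The associated triangle set $\mc{H}_2 \subseteq \mc{H}_1$ is edge-disjoint, and the number of uncovered edges at $v$ equals $\deg_G(v) - \omega_v(\mc{M}) \le n/(\log n)^{1000}$, giving the desired maximum-degree bound. The only real obstacle is the bookkeeping: tracking the $\Delta^{-\varepsilon}$ pseudorandomness error from \cref{thm:matching} alongside the $\sqrt{pn}\log n$ deviations in the co-degrees of $\mc{H}^{\ast}$, and choosing $C_{\ref{lem:nibble-cover}}$ large enough that both errors comfortably beat $(\log n)^{-1000}$.
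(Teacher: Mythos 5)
Your proposal is correct and follows essentially the same route as the paper: form the auxiliary $3$-uniform hypergraph on $E(G)$ whose hyperedges are the triangles of $\mc{H}_1$, observe $\Delta \approx pn/8$ and $\Delta^{\mr{co}} \le 1$, introduce one indicator-type weight function per vertex, and invoke \cref{thm:matching} with $\delta=1/2$. One small slip: with $\delta = 1/2$ and $r = 3$ the error exponent is $\varepsilon = \delta/(50r^2) = 1/900$, not $1/450$, but this only affects the value of $C_{\ref{lem:nibble-cover}}$ and not the argument.
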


\begin{proof}
Define the auxiliary $3$-uniform hypergraph $\mc{H}$ with vertices corresponding to the set of edges in $G$ and $3$-edges corresponding to the triangles in $\mc{H}_1$. Notice that $\Delta(\mc{H}) = pn/8 + O(\sqrt{pn}\log n)$. Furthermore as any pair of edges are contained in at most $1$ triangle, it follows that $\Delta^{\mr{co}}(\mc{H})\le 1$. Thus \cref{thm:matching} applies with $\delta = 1/2$ and therefore $\varepsilon = 1/(900)$, and $\Delta = \Delta(\mc{H})$. 

We now define the weight functions. For a vertex $v$, let $w_v$ be $1$ on all $3$-edges of $\mc{H}$ corresponding to triangles containing $v$, and $0$ elsewhere. Note that $w_v(E(\mc{H})) \ge pn^2/32\ge \Delta(\mc{H})^{1+\delta}$. Furthermore if $C$ is sufficiently large we have that $\exp(\Delta^{\varepsilon^2})\ge n$ and thus there is a matching $\mc{M}$ in $\mc{H}$ such that
\[
w_v(\mc{M})\ge \frac{(1 \pm \Delta^{-\varepsilon})}{\Delta} \cdot \frac{(\Delta - O(\sqrt{pn}\log n))\deg_G(v)}{2}\ge \frac{(1- 2\Delta^{-\varepsilon})\deg_G(v)}{2}
\]
for all $v\in V(G)$. This implies that the matching, which corresponds to triangles of $G$, covers all but $2\Delta^{-\varepsilon}n$ edges incident to $v$. Taking $C$ sufficiently large the result follows immediately.
\end{proof}

In the next three sections we prove \cref{lem:IA-iteration-simplified}. Our proof closely follows the proof of  \cite[Lemma 3.8]{BGKLMO20}, with the necessary adaptations to account for the random triangle set $\mc{H}'$ and the fact that $|V_1| / |V(G)|$ is relatively smaller in our setting than in \cite{BGKLMO20}.

\subsection{Setup for iterative absorption}\label{sub:setup}
We are now in position to apply the results of \cref{lem:fractional-matching-sample} and \cref{lem:nibble-cover}. However, we cannot simply invoke these results on the whole graph $G$; a more delicate approach is required.

Recall that we have a graph $G \subseteq K_n$ with a distinguished vertex subset $V_1$. Let $q = (\log |V_1|)^{-30}$. Let $R$ be a set of edges in $G[V(K_n)\setminus V_1, V_1]$ with the following properties:
\begin{enumerate}[({\bfseries{A\arabic{enumi}}})]
    \item For all $v\in V(G)\setminus V_1$, $\deg_{R} v = q|V_1| + O(q|V_1|(\log |V_1|)^{-1})$.
    \item For all $v\in V_1$, $\deg_{R}v = qn + O(qn(\log n)^{-1})$.
    \item For all $v\in V(G)\setminus V_1$, $v'\in V_1$, we have that $|N_R(v)\cap N_G(v')| = q|V_1| + O(q|V_1|(\log |V_1|)^{-1})$.
    \item For all $v,v'\in V(G)\setminus V_1$, we have that $|N_R(v)\cap N_R(v')|\ge q^2|V_1|/2$.
    \item For all $v,v'\in V_1$ we have that $|N_R(v)\cap N_R(v')|\le 2q^2n$.
\end{enumerate}

That such a graph $R$ exists is established by noting that if each edge in $G[V(K_n)\setminus V_1, V_1]$ is independently sampled with probability $q$ then, by Chernoff's inequality and a union bound, these properties hold with positive probability.

Let $G_1 = G\setminus (R\cup G[V_1])$. It is easy to see that $\delta(G_1) \ge |V(G)|-O(|V(G)|/(\log |V(G)|)$. Let $\mc{T}$ denote the set of triangles in $G_1$ and sample each triangle in $\mc{T}$ with probability $p = (\log n)^{2C_{\ref{lem:nibble-cover}}}\!/n$ to form the random set $\mc{H}''$. By \cref{lem:fractional-matching-sample}, with probability $1-n^{-\omega(1)}$ there exists a subset of triangles $\mc{H}_1\subseteq \mc{H}''$ such that every edge of $G_1$ is in $pn/8\pm\sqrt{pn}\log n$ triangles. Applying \cref{lem:nibble-cover}, we find that there exists a set of edge-disjoint triangles in $\mc{H}_1$ that covers all edges of $G_1$ except at most $n/(\log n)^{100}$ incident to each vertex. Let $L \subseteq G_1$ be the graph of uncovered edges. Let $L_1 \subseteq L$ be the ``internal'' edges with no vertex in $V_1$ and let $L_2 \coloneqq L \setminus L_1$ be the uncovered ``crossing'' edges with an endpoint in $V_1$. We remark that since $G_1$ contains no edges with both vertices in $V_1$, neither does $L$.

It remains to cover $G_2 \coloneqq L_1 \sqcup L_2 \sqcup R$ with triangles while not covering too many edges in $G[V_1]$. Let $R_2 \coloneqq L_2 \cup R$. Observe that $R_2$ satisfies:
\begin{enumerate}[({\bfseries{B\arabic{enumi}}})]
    \item For all $v\in V(G)\setminus V_1$, we have that $\deg_{R_2}(v) = q|V_1| + O(q|V_1|(\log |V_1|)^{-1})$.
    \item For all $v\in V_1$, we have that $\deg_{R_2}(v) = qn + O(qn/\log n)$.
    \item For all $v\in V(G)\setminus V_1$, $v'\in V_1$, we have that $|N_{R_2}(v)\cap N_G(v')| = q|V_1| + O(q|V_1|(\log |V_1|)^{-1})$.
    \item For all $v,v'\in V(G)\setminus V_1$, we have that $|N_{R_2}(v)\cap N_{R_2}(v')|\ge q^2|V_1|/2$.
    \item For all $v,v'\in V_1$ we have that $|N_{R_2}(v)\cap N_{R_2}(v')|\le 3q^2n$.
\end{enumerate}

We complete the construction by first covering the internal edges that comprise $L_1$ and then covering the remaining crossing edges.

\subsection{Cover-down stage \texorpdfstring{$1$}{1}: internal edges}\label{sub:cover-down-1}

\begin{lemma}\label{lem:cover-down-1}
With the above setup, let $\mc{T}_2$ denote the set of triangles in $G_2$ and let  $\mc{H}_3 \subseteq \mc{T}_2$ be a random set of triangles with each triangle included with probability $(\log n)^{100}/n$. Then with probability $1-n^{-\omega(1)}$, one can choose edge disjoint triangles $\mc{H}_4\subseteq \mc{H}_3$ such that $L_1 \subseteq E(\mc{H}_4)$.
\end{lemma}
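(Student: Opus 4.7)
The plan is to cover each $e = \{u,v\} \in L_1$ by a ``cover triangle'' $\{u,v,w\}$ with $w \in V_1$, picked uniformly at random from those available in $\mc H_3$, and to invoke the Lov\'asz Local Lemma to ensure the selected triangles are pairwise edge-disjoint. Two structural facts make this work: every cover triangle lies in $G_2$ (its other two edges $\{u,w\}, \{v,w\}$ must lie in $R_2$), and each cover triangle contains exactly one $L_1$-edge. Hence $T_e$'s for distinct $e$ can only conflict through shared $R_2$-edges.

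First I would expose $\mc H_3$ and establish, with probability $1 - n^{-\omega(1)}$, two concentration events defining a good event $\mc G$. For every $e = \{u,v\} \in L_1$, the count $X_e$ of cover triangles of $e$ in $\mc H_3$ is binomially distributed with trials $|N_{R_2}(u) \cap N_{R_2}(v)|$, which is $\Theta(q^2|V_1|)$ by (B4) together with the matching upper bound that follows from concentration in the construction of $R_2$. Chernoff and a union bound yield $X_e \in [(\log n)^{36}/4,\, 4(\log n)^{38}]$ for every $e$ simultaneously. For every $f = \{u,w\} \in R_2$ with $u \notin V_1$, define
\[
\mc E_f = \bigl\{ e \in L_1 : \text{the unique cover triangle of $e$ through $f$ lies in $\mc H_3$} \bigr\};
\]
then $|\mc E_f|$ is stochastically dominated by $\mr{Bin}(n/(\log n)^{100},\,(\log n)^{100}/n)$ using $\Delta(L_1) \le n/(\log n)^{100}$, so a Poisson-type tail bound combined with the union bound over $|R_2| \le n^2$ forces $|\mc E_f| = O(\log n)$ for every $f$.

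Condition on $\mc G$ and sample $T_e$ independently for each $e \in L_1$, uniformly from the cover triangles of $e$ in $\mc H_3$. For each $f \in R_2$ let $B_f$ be the event that two distinct $T_e$'s contain $f$. Only $e \in \mc E_f$ can contribute, and for each such $e$ the probability that $T_e$ picks the triangle through $f$ is $1/X_e$, so
\[
\Pr[B_f \mid \mc G] \le \binom{|\mc E_f|}{2} \cdot \bigl(4(\log n)^{-36}\bigr)^2 = O\bigl((\log n)^{-70}\bigr).
\]
Since $B_f$ depends only on $\{T_e : e \in \mc E_f\}$, it is mutually independent of all $B_{f'}$ with $\mc E_f \cap \mc E_{f'} = \emptyset$. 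For a fixed $e \in \mc E_f$, the $f'$ satisfying $e \in \mc E_{f'}$ are precisely the $R_2$-edges appearing in the cover triangles of $e$ in $\mc H_3$, numbering at most $2 X_e = O((\log n)^{38})$. Therefore the dependency degree is $d = O((\log n)^{39})$ and $ep(d+1) = O((\log n)^{-30}) < 1$. By \cref{lem:lll} a realization of $(T_e)_{e \in L_1}$ avoiding every $B_f$ exists, so $\mc H_4 \coloneqq \{T_e : e \in L_1\} \subseteq \mc H_3$ is an edge-disjoint family covering $L_1$.

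The main technical obstacle is balancing (i) the lower bound on $X_e$ needed to make each conflict at $f$ improbable, against (ii) the upper bounds on $X_e$ and $|\mc E_f|$ needed to keep the LLL dependency degree small. Both tensions are resolved by the carefully engineered intersection and degree properties (B1)--(B5) of $R_2$ combined with Chernoff estimates on $\mc H_3$, and by the very low density $\Delta(L_1) \le n/(\log n)^{100}$ delivered by the nibble step in \cref{lem:nibble-cover}.
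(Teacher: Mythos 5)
Your proof is correct, but it takes a genuinely different route from the paper's. The paper processes the edges of $L_1$ sequentially in a random greedy algorithm: for each $e\in L_1$ in turn it exposes the (as yet unexplored) triangles of $\mc H_3$ containing $e$ and chooses one uniformly at random among those not conflicting with earlier choices, then uses Chernoff to show that at every stage at least one choice is available. Your argument instead exposes all of $\mc H_3$ at the outset, verifies with high probability a global ``good event'' controlling the per-edge counts $X_e$ and the per-conflict-edge multiplicities $|\mc E_f|$, and then samples the cover triangles $(T_e)$ independently and uniformly and applies the Lov\'asz Local Lemma (\cref{lem:lll}) to show a conflict-free realization exists. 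Both are valid; the paper's greedy analysis is shorter (no need to control $|\mc E_f|$ or to bound the LLL dependency graph), while your LLL formulation makes the existence statement more uniform and does not rely on an ordering of $L_1$. One small point worth flagging: you invoke an \emph{upper} bound $|N_{R_2}(u)\cap N_{R_2}(v)| = O(q^2|V_1|)$ for $u,v\notin V_1$ to get the two-sided concentration of $X_e$, but only the lower bound appears among (B1)--(B5). The upper bound does follow from the same Chernoff estimate used in constructing $R$ (together with $\Delta(L_2)\le n/(\log n)^{100}$), so this is only a minor omission; you should make it explicit if you want the argument fully self-contained, since it is what keeps the LLL dependency degree $d$ polylogarithmic.
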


\begin{proof}
We construct $\mc{H}_4$ with a random greedy algorithm. Order the edges in $L_1$ arbitrarily. When processing an edge $e$, expose the triangles of $\mc{H}_3$ containing $e$ and then choose one such triangle, not overlapping with previous choices, uniformly at random and add it to $\mc{H}_4$. This procedure only fails if for some $e$, all triangles containing it in $\mc{H}_3$ overlap previous choices. However note that initially each edge in $L_1$ is contained in at least $q^2|V_1|/2$ triangles in $\mc{T}_2$, and that triangles added previously to $\mc{H}_4$ eliminate at most $2n/(\log n)^{100}\le q^2|V_1|/4$ of these. Thus the expected number of extensions at each stage is at least $q^2|V_1|/4 \cdot (\log n)^{100}/n\gtrsim (\log n)^{50}$. Applying Chernoff's inequality and a union bound, with probability $1-n^{-\omega(1)}$ there is at least one choice for each stage.
\end{proof}

Given \cref{lem:cover-down-1}, the remaining graph to cover is $R_3 \coloneqq R_2\setminus E(\mc{H}_4)$. We note that since every triangle in $\mc{H}_4$ involves an edge of $L_1$, we have:
\begin{enumerate}[({\bfseries{C\arabic{enumi}}})]
    \item\label{C1} For all $v\in V(G)\setminus V_1$, we have that $\deg_{R_3}(v) = q|V_1| + O(q|V_1|/\log |V_1|)$.
    
    \item\label{C2} For all $v\in V_1$, we have that $\deg_{R_3}(v) = qn + O(qn/\log n)$.
    
    \item\label{C3} For all $v\in V(G)\setminus V_1$, $v'\in V_1$, we have that $|N_{R_3}(v)\cap N_G(v')| = q|V_1| + O(q|V_1|/\log |V_1|)$.
    
    \item\label{C4} For all $v,v'\in V_1$, we have that $|N_{R_3}(v)\cap N_{R_3}(v')|\le 3q^2n$.
\end{enumerate}

\subsection{Cover-down stage \texorpdfstring{$2$}{2}: crossing edges}\label{sub:cover-down-2}

Our goal is to cover $R_3$ using only a small number of edges from $G[V_1]$. This will be accomplished by reducing the problem to a simultaneous matching problem on link graphs of vertices in $V(G) \setminus V_1$.

We first require the following lemma. It is an immediate consequence of the (substantially stronger) main results in~\cite{KLS14, Joh20}. We include an elementary proof for completeness.

\begin{lemma}\label{lem:graph-subsample}
Let $G'$ be a graph on $N$ vertices, with $N$ even, and with minimum degree at least $3N/4$. Let $H$ be a random subgraph of $G'$ where each edge is sampled independently with probability $(\log N)^2/N$. Then $H$ has a perfect matching with probability $1-N^{-\omega(1)}$. 
\end{lemma}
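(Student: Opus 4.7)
The plan is to verify Tutte's perfect-matching condition for $H$ with probability $1-N^{-\omega(1)}$; the argument splits into a Chernoff stage (establishing pseudorandom properties of $H$) followed by a Tutte-based case analysis. Set $D\coloneqq(\log N)^2/2$.

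First I would show that with probability $1-N^{-\omega(1)}$, $H$ satisfies simultaneously (P1) $D\le\delta(H)\le\Delta(H)\le 2(\log N)^2$, and (P2) for every pair of disjoint sets $A,B\subseteq V(H)$ with $|A|,|B|\ge N/3$ there is an $H$-edge between $A$ and $B$. Property (P1) is a vertex-by-vertex Chernoff bound followed by a union bound, since $\mb{E}[\deg_H(v)]\in[3(\log N)^2/4,(\log N)^2]$. For (P2) the hypothesis $\delta(G')\ge 3N/4$ yields $e_{G'}(A,B)\ge|A|(|B|-N/4)=\Omega(N^2)$, so the expected number of $H$-edges between $A,B$ is $\Omega(N(\log N)^2)$; Chernoff gives failure probability $\exp(-\Omega(N(\log N)^2))$, which easily absorbs the $\le 3^N$ choices of partition $(V\setminus(A\cup B),A,B)$.

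Conditional on (P1)--(P2), I would verify Tutte's condition by contradiction: suppose $S\subseteq V(H)$ satisfies $o(H-S)\ge|S|+2$ (the gap is at least $2$ by parity of $N$), and let $C_1,\dots,C_m$ with $m\ge|S|+2$ be the odd components of $H-S$. Property (P1) applied inside each $C_i$ forces $|C_i|+|S|\ge D+1$. For $|S|\le N/3$, I would partition the components of $H-S$ into two subfamilies whose unions $A,B$ satisfy $|A|,|B|\ge N/3$: if some $|C_j|\in[N/3,\,2N/3-|S|]$ this works directly with $A=C_j$; otherwise if every $|C_j|<N/3$ then a greedy balancing among components suffices; the dominant-component sub-case $|C_{\max}|>2N/3-|S|$ needs a refinement of (P2) for pairs with $|A|\ge D$ and $|B|\ge N/3$ (also Chernoff-computable from $e_{G'}(A,B)\ge|A|\cdot N/12$). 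In every case there are no $H$-edges between $A$ and $B$, contradicting (P2) or its refinement.

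For $N/3<|S|<N/2$ (the only remaining range, since $o(H-S)\le|V\setminus S|$), the upper bound $\Delta(H)\le 2(\log N)^2$ caps the number of singleton odd components at $O(|S|)$: each singleton $\{v\}$ accounts for $\ge D$ $H$-edges into $S$, while $S$ absorbs at most $|S|\Delta(H)$ such edges in total. Each non-singleton odd component uses $\ge 3$ vertices, so their count is at most $(N-|S|)/3$; combined with $m\ge|S|+2$ and a further refinement of (P2) applied to the non-singleton components against $S$, this produces the desired arithmetic contradiction. The main obstacle is executing the Tutte case analysis cleanly, particularly the dominant-component sub-case and the $|S|>N/3$ regime, where the refined Chernoff estimates must be patched together rather than used off the shelf.
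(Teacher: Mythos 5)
Your approach (verify Tutte's condition directly on $H$) is genuinely different from the paper's, which passes to a random equipartition $X\cup Y$ of $V(G')$, shows $G'[X,Y]$ still has minimum degree $\ge N/3$, and then applies a defect-Hall criterion (quoted as Lemma 4.8 from \cite{SS17}) to the bipartite subgraph of $H$. The bipartition is not an incidental convenience: it caps the size of both sides of the Hall inequality at $\lceil N/4\rceil$, so the union bound costs only $O(N^{|S|+|S'|})$ against a Chernoff failure of $\exp(-\Omega(|S|(\log N)^2))$, which closes cleanly. Your Tutte route has to handle $|S|$ up to $N/2$ and arbitrary component structure, and this is where the gaps are.

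Two of the steps you flag as ``needing refinement'' actually fail as stated. First, the dominant-component sub-case requires a version of (P2) with $|A|\ge D=(\log N)^2/2$ and $|B|\ge N/3$, but this cannot be established with probability $1-N^{-\omega(1)}$: for $|A|=a\ge D$, the Chernoff failure probability for a fixed pair $(A,B)$ is $\exp(-\Omega(a(\log N)^2))$, while $B$ (a union of components of $H-S$, or equivalently $V\setminus(A\cup S)$) still ranges over roughly $2^N$ possibilities; you would need $a\gtrsim N/(\log N)^2$, far larger than $D$. Second, in the regime $N/3<|S|<N/2$, the singleton bound $(\text{\#\,singletons})\cdot D\le|S|\Delta(H)$ gives only $\#\,\text{singletons}\le 4|S|$ (or $\approx(4/3)|S|$ with sharper constants), and the non-singleton components contribute up to $(N-|S|)/3<(2/3)|S|$. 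These two bounds together do not contradict $o(H-S)\ge|S|+2$; the arithmetic simply does not close. It is also unclear what ``a further refinement of (P2) applied to the non-singleton components against $S$'' could mean, since edges between a component and $S$ are perfectly consistent with that component being separated off, so (P2)-type statements in that direction give no contradiction. These are genuine missing ideas, not just details to be filled in.
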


The proof of \cref{lem:graph-subsample} uses the following convenient Hall-type criterion for a bipartite graph to have a perfect matching. It is an immediate consequence of the main theorem in \cite{SS17}.

\begin{lemma}\label{lem:hall}
Let $G' = (X\cup Y, E)$ be a bipartite graph with $|X| = |Y| = N$. Suppose that for every $S\subseteq X$, $S'\subseteq Y$ with $|S'|< |S|\le\lceil N/2\rceil$ we have $e(S,Y\setminus S')\neq 0$, and that for every $T'\subseteq X$, $T\subseteq Y$ with $|T'|<|T|\le\lceil N/2\rceil$ we have $e(T,X\setminus T')\neq 0$. Then $G$ has a perfect matching.
\end{lemma}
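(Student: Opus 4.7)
The plan is to prove \cref{lem:hall} by contradiction via a short case analysis on the sizes of sets witnessing a violation of Hall's condition. If no perfect matching exists then, since $|X|=|Y|=N$, Hall's theorem applied on the $X$-side yields a deficient set $S\subseteq X$ with $|N(S)|<|S|$. Setting $T:=Y\setminus N(S)$, I would split into three subcases according to the sizes of $S$ and $T$ relative to $\lceil N/2\rceil$, and in each derive a contradiction from one of the two hypotheses.

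In the easy case $|S|\le\lceil N/2\rceil$, I would take $S':=N(S)$; then $|S'|<|S|\le\lceil N/2\rceil$ and $e(S,Y\setminus S')=0$ by the definition of the neighborhood, directly contradicting the first hypothesis. Otherwise $|S|>\lceil N/2\rceil$, and by construction $N(T)\cap S=\emptyset$ together with $|T|=N-|N(S)|>N-|S|$ gives $|N(T)|\le N-|S|<|T|$, so $T$ is deficient on the $Y$-side. If $|T|\le\lceil N/2\rceil$, I would apply the symmetric hypothesis with $T':=N(T)$, obtaining $e(T,X\setminus T')\ne 0$ in contradiction with the definition of $N(T)$.

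The remaining case is the genuinely delicate one: both $|S|$ and $|T|$ exceed $\lceil N/2\rceil$, while $e(S,T)=0$ by construction. Here I would shrink to a subset $S^*\subseteq S$ with $|S^*|=\lceil N/2\rceil$ and take $S'':=Y\setminus T\subseteq Y$; then $|S''|=N-|T|<N-\lceil N/2\rceil=\lfloor N/2\rfloor\le|S^*|$, so the first hypothesis applied to the pair $(S^*,S'')$ forces $e(S^*,Y\setminus S'')=e(S^*,T)\ne 0$, contradicting $e(S^*,T)\subseteq e(S,T)=0$.

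The only subtlety to watch is verifying the strict inequality $|S''|<|S^*|$ in this final case for both parities of $N$; this follows at once from $|T|>\lceil N/2\rceil\ge\lfloor N/2\rfloor$, which holds irrespective of whether $N$ is even or odd. Once this bookkeeping is in hand the three cases exhaust all possibilities, and the proof is complete.
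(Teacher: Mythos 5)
Your proof is correct. One point of comparison: the paper does not actually prove \cref{lem:hall} at all --- it dismisses it as an immediate consequence of the main theorem of the cited reference --- so your self-contained derivation from Hall's theorem is a genuinely different (and more elementary) route. Your three-case analysis is complete and each case checks out: in the case $|S|\le\lceil N/2\rceil$ the pair $(S,N(S))$ violates the first hypothesis; when $|S|>\lceil N/2\rceil$ the set $T=Y\setminus N(S)$ satisfies $N(T)\subseteq X\setminus S$ and $|T|\ge N-|S|+1>|N(T)|$, so if $|T|\le\lceil N/2\rceil$ the pair $(T,N(T))$ violates the second hypothesis; and in the remaining case the shrinking trick $S^*\subseteq S$ with $|S^*|=\lceil N/2\rceil$ together with $S''=Y\setminus T$ works because $|T|\ge\lceil N/2\rceil+1$ forces $|S''|\le\lfloor N/2\rfloor-1<|S^*|$, while $e(S^*,T)\le e(S,T)=0$. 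The cases are exhaustive and the parity bookkeeping is handled correctly, so nothing is missing. What your approach buys is transparency and independence from the external reference; what the citation buys the authors is brevity, since the quoted criterion is exactly engineered to be applied as a black box in the proof of \cref{lem:graph-subsample}.
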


\begin{proof}[{Proof of \cref{lem:graph-subsample}}]
Consider a uniformly random equipartition $X\cup Y$ of $V(G')$ and let $G^\dagger \coloneqq G'[X,Y]$. By the Chernoff bound for hypergeometric random variables and a union bound with probability $1 - N^{-\omega(1)}$ we have $\deg_{G^\dagger}(v) \ge N/3$ for each vertex $v$. Now consider some $S\subseteq X, S'\subseteq Y$ satisfying $\lceil N/4\rceil\ge|S|>|S'|$. It holds that
\begin{align*}
e_{G^\dagger}(S,Y\setminus S') &= \sum_{v\in S}\deg_{G^\dagger}(v) - e_{G'}(S,S')\ge (N/3)|S| - |S|^2\ge N|S|/12.
\end{align*}
Similarly, if $T'\subseteq X, T\subseteq Y$ with $|T'|<|T|\le\lceil N/4\rceil$ then $e_{G^\dagger}(T,X\setminus T')\ge N|S|/12$.

We observe that with probability at least $1-\exp(-\Omega((\log N)^2))$, we have $e_{H}(S,Y\setminus S')>0$ for every pair of sets $S\subseteq X, S'\subseteq Y$ with $\lceil N/4\rceil\ge|S|>|S'|$.  Indeed, by a union bound over $S,S'$ and the Chernoff bound, the probability that this fails to hold is at most
\begin{align*}
	\sum_{k=1}^{\lceil N/4\rceil}\binom{N/2}{k}\sum_{\ell=0}^{k-1} \binom{N/2}{\ell}\exp(-\Omega(k(\log N)^2)) & \le\sum_{k=1}^{\lceil N/4\rceil} N^{2k}\exp(-\Omega(k(\log N)^2)))\\
	& \le\exp(-\Omega((\log N)^2)).
\end{align*}
By symmetry, the same is true (with probability at least $1-\exp(-\Omega((\log N)^2))$) when switching the roles of $X$ and $Y$. The desired result then follows from \cref{lem:hall}.
\end{proof}

\begin{lemma}\label{lem:run-match}
Fix $R_3$ as in \cref{sub:cover-down-1} and suppose that it satisfies \cref{C1}-\cref{C4}. Let $\mc{T}_3$ be the set of triangles in $R_3\cup G[V_1]$ and let $\mc{H}_5 \subseteq \mc{T}_3$ be a random set of triangles with each triangle included with probability $(\log |V_1|)^{2}/(q|V_1|)$. We then have:
\begin{itemize}
    \item For each edge in $G[V_1]$, the probability that it is contained in a triangle in $\mc{H}_5$ is at most $(\log n)^{-25}$. Moreover, these events are mutually independent.
    
    \item With probability $1-n^{-\omega(1)}$, there exists a set of edge disjoint triangles $\mc{H}_6\subseteq \mc{H}_5$ such that $R_3 \subseteq E(\mc{H}_6)$.
\end{itemize}
\end{lemma}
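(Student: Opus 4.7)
My plan splits into the two conclusions. The key structural observation is that since $R_3\subseteq G[V(G)\setminus V_1,V_1]$ is bipartite between $V_1$ and $V(G)\setminus V_1$, every triangle in $\mc{T}_3$ --- which I interpret as a triangle of $R_3\cup G[V_1]$ using at least one $R_3$-edge, the only triangles relevant to covering $R_3$ --- has exactly one vertex in $V(G)\setminus V_1$, two vertices in $V_1$, and uses precisely two $R_3$-edges (incident to the outer vertex) together with one $G[V_1]$-edge (between the two inner vertices). For the first conclusion, this immediately yields mutual independence: distinct $e,e'\in G[V_1]$ are covered by disjoint families of triangles in $\mc{T}_3$, and each triangle is sampled into $\mc{H}_5$ independently. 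For $e=\{v,w\}\in G[V_1]$, the number of triangles in $\mc{T}_3$ containing $e$ equals $|N_{R_3}(v)\cap N_{R_3}(w)|\le 3q^2n$ by \cref{C4}, so a union bound gives coverage probability at most $3q^2n\cdot(\log|V_1|)^2/(q|V_1|)=3q(\log|V_1|)^2(n/|V_1|)\le(\log n)^{-25}$, using $q=(\log|V_1|)^{-30}$ and $n/|V_1|\le(\log n)^4$.

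For the second conclusion, I construct $\mc{H}_6$ by finding, for each $u\in V(G)\setminus V_1$, a perfect matching $M_u$ in a link graph; taking the triangles at $u$ to be $\{u,v,w\}$ for $\{v,w\}\in M_u$, the required edge-disjointness reduces to pairwise disjointness of the $M_u$ within $G[V_1]$. Define $H_u$ on vertex set $N_{R_3}(u)\subseteq V_1$ with edges inherited from $G[V_1]$. By \cref{C1} and \cref{C3}, $N_u\coloneqq|V(H_u)|=q|V_1|(1+O(1/\log|V_1|))$ and $\delta(H_u)\ge N_u(1-O(1/\log|V_1|))$, so $H_u$ is nearly complete; a parity argument propagating the evenness of $V(G)\setminus V_1$-degrees in $G$ through the prior edge-disjoint triangle removals (both the nibble and the $\mc{H}_4$ cover-down preserve vertex-parities modulo $2$) shows $N_u=\deg_{R_3}(u)$ is even. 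Restricting to $\mc{H}_5$ induces on $H_u$ a random subgraph $\wt H_u$ where each edge is kept independently with probability $p_0\coloneqq(\log|V_1|)^2/(q|V_1|)$; the families $\{\wt H_u\}_{u}$ are jointly independent since triangles with distinct outer vertex are disjoint. Since $p_0\approx(\log N_u)^2/N_u$, \cref{lem:graph-subsample} applied to each $H_u$ and a union bound over the $|V(G)\setminus V_1|<n$ choices of $u$ yield perfect matchings $M_u\subseteq\wt H_u$ with probability $1-n^{-\omega(1)}$.

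To arrange the $M_u$ to be pairwise edge-disjoint within $G[V_1]$, I process the vertices $u\in V(G)\setminus V_1$ in a uniformly random order and at each step invoke \cref{lem:graph-subsample} on the subgraph of $\wt H_u$ obtained by deleting the $G[V_1]$-edges used by previously chosen matchings. The key bookkeeping: by \cref{C2}, each $v\in V_1$ belongs to at most $qn(1+o(1))$ link graphs $H_{u'}$, so $v$ accumulates at most $qn(1+o(1))$ previously used incident edges in total; but only those whose other endpoint also lies in $N_{R_3}(u)$ reduce $v$'s degree in $H_u$, and under the random ordering (together with approximate uniformity of the earlier random matchings) concentration shows this count is typically $O(q^2n)=O(N_u\cdot q(n/|V_1|))=O(N_u(\log n)^{-26})$, comfortably below $N_u/\log N_u$. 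Hence the pruned $\wt H_u$ retains the near-completeness hypothesis of \cref{lem:graph-subsample}, so a perfect matching exists at every step; taking $\mc{H}_6\coloneqq\bigcup_u\{\{u,v,w\}:\{v,w\}\in M_u\}$ yields the required edge-disjoint triangle family covering $R_3$.

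The main obstacle is making the concentration step rigorous, since the ``freshness'' of $\wt H_u$ is compromised by the conditioning introduced in previous rounds: one must perform a careful martingale analysis over the random ordering and matching selections, or alternatively appeal to the Lov\'asz Local Lemma (\cref{lem:lll}) with bad events indexed by edges of $G[V_1]$ lying in two or more matchings. In either case the feasibility hinges on the quantitative slack $qn/|V_1|=O((\log n)^{-26})$ supplied by \cref{C4} and the choice $q=(\log|V_1|)^{-30}$, which ensures that collisions between matchings at different outer vertices are rare.
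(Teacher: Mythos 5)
Your argument for the first bullet matches the paper's (bound the number of triangles through a fixed $G[V_1]$-edge by $3q^2n$ using \cref{C4}, then take a union bound over those triangles; mutual independence follows since distinct $G[V_1]$-edges have disjoint triangle-extensions into $R_3$). Your observation that $\deg_{R_3}(u)$ is even for $u\notin V_1$ (so that a perfect matching in $H_u$ is not ruled out by parity) is correct and worth stating explicitly; after the cover-down stages, the only uncovered edges at $u$ are exactly those of $R_3$, so $\deg_{R_3}(u)\equiv\deg_G(u)\equiv 0\pmod 2$.

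For the second bullet, however, you have missed the intended use of the first bullet, and this creates the very gap you flag. You introduce a \emph{uniformly random ordering} of $V(G)\setminus V_1$ and then try to control the accumulated damage to each later link $H_u$ via ``approximate uniformity of the earlier random matchings'' and a putative martingale/LLL argument. This machinery is unnecessary, and the concentration step you describe is genuinely delicate because the matchings chosen in earlier rounds condition the randomness in complicated ways. The paper sidesteps all of this with the following observation: every $G[V_1]$-edge used by any earlier matching $M_{u'}$ lies in some triangle of $\mc{H}_5$, so the set of $G[V_1]$-edges ever removed from a later link is contained in the (fixed, ordering-independent) set
\[
\{\,e\in G[V_1]:\ e\text{ lies in a triangle of }\mc{H}_5\,\},
\]
which by the first bullet is a product measure in which each edge appears with probability at most $(\log n)^{-25}$. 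Consequently, for a deterministic vertex ordering, the degree loss at each $v\in N_{R_3}(u)$ inside $H_u$ is stochastically dominated by $\mathrm{Bin}(N_u-1,(\log n)^{-25})$, whose mean $\approx q|V_1|(\log n)^{-25}=n^{1-o(1)}$ is large enough that Chernoff and a union bound over all $u$ and $v$ give concentration with probability $1-n^{-\omega(1)}$. This shows the pruned link still has minimum degree $\ge(3/4)N_u$, so \cref{lem:graph-subsample} applies. Your side route via \cref{C2} and the bound $O(q^2n)$ reaches a comparable numerical estimate, but only under the unproven ``approximate uniformity'' assumption; the clean path is to use the first bullet's stochastic domination, which makes the ordering irrelevant and eliminates the need for martingales or the Lov\'asz Local Lemma here.
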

\begin{proof}
For the first point, recall that by \cref{C4} every pair of distinct $u,v \in V_1$ has at most $3q^2n$ common neighbors in $R_3$. Thus, the probability for an edge to be contained in a triangle in $\mc{H}_5$ is at most $3q^2n \cdot (\log |V_1|)^{2}/(q|V_1|)\le (\log n)^{-25}$. Moreover, for distinct edges in $G[V_1]$, their extensions into triangles in $\mc{H}_5$ are disjoint, implying mutual independence of these events.

For the second point, order the vertices in $V(G)\setminus V_1$ and consider them sequentially. Suppose we are processing $v$ and note first that by \cref{C1} the graph spanned by the triangles containing $v$ in $R_3\cup G[V_1]$ has $q|V_1| + O(q|V_1|(\log |V_1|)^{-1})$ vertices. Furthermore by \cref{C3} its link graph (i.e., the subgraph spanned by $V_1$)has minimum degree $q|V_1| + O(q|V_1|(\log |V_1|)^{-1})$. Our goal is to find a perfect matching in this link graph that is edge-disjoint from previously found perfect matchings. These perfect matchings correspond to the desired $\mc{H}_6$.

At every step the set of edges removed is stochastically dominated by our random sample of edges at rate $(\log n)^{-25}$, so we find that the minimum degree in this link is essentially unchanged by the previous triangles removed in this process. Therefore if one samples each triangle in the link with probability $(\log|V_1|)^{2}/(q|V_1|)$, or equivalently each edge in the link with probability $(\log|V_1|)^{2}/(q|V_1|)$, by \cref{lem:graph-subsample} we can construct a matching for $v$ with probability $1-n^{-\omega(1)}$. This immediately gives the desired result.
\end{proof}

We are now in position to prove \cref{lem:IA-iteration-simplified}.

\begin{proof}[Proof of \cref{lem:IA-iteration-simplified}]
Using the construction above, let $\mc{H}^\ast \coloneqq \mc{H}_2\cup \mc{H}_4\cup \mc{H}_6$. Observe that $\mc{H}^\ast$ is stochastically dominated by a random hypergraph of the specified density. Furthermore, if $R_3$ is suitable in the sense of \cref{sub:cover-down-1} then $E(\mc{H}^\ast)[V_1]$ is stochastically dominated by a random hypergraph of the appropriate density as well. Hence, we may take $\mc{H} = \mc{H}^\ast$ if $R_3$ is suitable. Otherwise we take $\mc{H} = \emptyset$. Since $R_3$ is suitable with probability $1 - n^{-\omega(1)}$ and $\mc{H}^\ast$ covers all edges in $G \setminus G[V_1]$ with probability $1 - n^{-\omega(1)}$, the result follows.
\end{proof}

\section{Modifications for Latin squares}\label{sec:latin-squares}
In this section we briefly discuss the necessary changes to prove \cref{thm:latin}, as opposed to \cref{thm:main}. Since these changes are largely superficial, we do not repeat the arguments in detail.

A Latin square can be thought of as a triangle-decomposition of $K_{n,n,n}$, with vertex parts $V^1,V^2,V^3$, each of size $n$. We say that a tripartite subgraph of $K_{n,n,n}$ is \emph{triangle-divisible} if for every $j\in[3]$ and vertex $v\in V^j$, its degrees into $V^{j-1}$ and $V^{j+1}$ are the same (taking indices $\bmod{3}$). The analogue of \cref{thm:boosting} is the following:
\begin{theorem}\label{thm:latin-boosting}
Fix a triangle-divisible tripartite graph $G\subseteq K_{n,n,n}$ with $\Delta(K_{n,n,n}\setminus G)\le n/\log n$ and $n\ge \exp(C_{\ref{thm:latin-boosting}}/\eta^4)$. Let $\mc{H}$ be the result of randomly sampling each triangle of $K_{n,n,n}$ with probability $n^\eta/n$. With probability at least $1/2$ the collection $\mc{H}$ contains a triangle-decomposition of $G$.
\end{theorem}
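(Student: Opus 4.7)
The plan is to mirror the bootstrapping proof of \cref{thm:boosting} essentially verbatim, with every construction reinterpreted in the tripartite setting on $K_{n,n,n}$. Writing $\eta_k = 2/(k+2)$, I would induct on $k$ and prove \cref{thm:latin-boosting}$(\eta=\eta_k)$, so that \cref{thm:latin} follows by taking $\eta'=c(\log n)^{-1/4}$ and taking a union of $\log n$ independent samples as in the derivation of \cref{thm:main}. For the base case $\eta_0 = 1$, I would invoke the tripartite analogue of \cite{Kee18} or \cite[Theorem~1.2]{BKLO16}, which guarantees triangle-decompositions of sufficiently dense triangle-divisible tripartite graphs.

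For the inductive step, the absorber $\mc{F}_{2m}$ must be realized as a tripartite $3$-graph. I would place both apex vertices of the bipyramid in the same part (say $V^1$) and alternate the $2m$ cycle vertices between $V^2$ and $V^3$; every triangle of $\mc{F}_{2m}$ (and of the flip $\mc{F}_{2m}^\ast$) is then automatically a transversal of the tripartition. With this tripartite absorber in hand, the statements of \cref{lem:hypergraph-comp,lem:hypergraph-comp-2} carry over with unchanged quantitative bounds; the second-moment computation, the Erd\H{o}s-configuration argument, and the bounds on $v(\mc{F}')-e(\mc{F}')$ are intrinsic to the combinatorics of $\mc{F}_{2m}$ and are unaffected by the tripartition. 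Similarly, the polymer calculation in \cref{clm:spread}, and the bounds on the functions $f(\mc{P},T,\mc{T})$ and $g(\mc{P})$, transfer without change because they depend only on the edge-vs-vertex counts of sub-configurations of $\mc{F}_{2m}$.

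The main technical step is to establish the tripartite analogue of \cref{prop:IA-random-set}: given a triangle-divisible tripartite $G \subseteq K_{n,n,n}$ with $\Delta(K_{n,n,n}\setminus G)$ small and a small distinguished tripartite set $X = X^1 \sqcup X^2 \sqcup X^3$, one can cover $G \setminus G[X]$ using a sparse random set of triangles while leaving the restriction to $X$ nearly complete. I would prove this by iterating a tripartite analogue of \cref{lem:IA-iteration-simplified} along a nested vortex $V^j = V^j_0 \supseteq V^j_1 \supseteq \cdots \supseteq V^j_\ell = X^j$ for each $j\in\{1,2,3\}$, simultaneously in all three parts. The cover-down steps require only superficial changes: the reservoir $R \subseteq G[V(K_{n,n,n})\setminus V_1, V_1]$ becomes a tripartite bipartite-like reservoir between $V_0 \setminus V_1$ and $V_1$ respecting the three parts; the R\"odl nibble via \cref{thm:matching,lem:fractional-matching} is applied to the tripartite triangle hypergraph (the tripartite version of the fractional triangle matching exists by the same regularity reasoning); and the link-graph matching step (\cref{lem:graph-subsample,lem:run-match}) is replaced by a bipartite perfect-matching statement between the two parts of $V_1$ opposite to the vertex being processed. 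The tripartite Hall-type condition used there is standard and is the tripartite specialization of \cite{KLS14,Joh20}. Triangle-divisibility on the tripartite graph ensures that after each nibble the remaining graph's degree sequence still balances across the parts, which is exactly what is needed to feed the iteration.

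The main obstacle I anticipate is the bookkeeping in the tripartite iterative absorption: one must track that at every stage the uncovered graph on $V_i$ remains tripartite triangle-divisible and has sufficiently balanced degrees into the two opposite parts of $V_{i+1}$, so that the link-matching step can be carried out simultaneously for all vertices outside $V_{i+1}$. However, this is exactly the content of the tripartite version of \cite[Lemma~3.8]{BGKLMO20}, and it requires no new ideas beyond those already in \cref{sec:IA}. With the tripartite \cref{prop:IA-random-set} in hand, the bootstrapping in \cref{sub:boosting}---the algorithm \cref{Alg1,Alg2,Alg3,Alg4,Alg5,Alg6,Alg7,Alg8}, the success claim \cref{clm:success}, and the spread claim \cref{clm:spread}---transfers line-for-line with every parameter computation unchanged, yielding an $O(n^{\eta_k-1}/(\log n)^2)$-spread distribution on triangle-decompositions of $G$. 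Applying \cref{thm:FKNP} completes the induction, and the usual boosting over $m = \log n$ independent samples then yields \cref{thm:latin}.
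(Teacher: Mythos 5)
Your proposal follows the paper's approach essentially step-for-step: adapt the bootstrapping with the tripartite absorber $\mc{F}_{2m}$ (your split---apexes in one part, cycle alternating between the other two---is the intended one), run the tripartite vortex for the analogue of \cref{prop:IA-random-set}, and note that the link-graph matching in the last cover-down step is already bipartite, so the random equipartition of \cref{lem:graph-subsample} is unnecessary. These are exactly the modifications the paper lists, and the spread/polymer calculations in \cref{clm:spread} indeed transfer unchanged since they depend only on the combinatorics of $\mc{F}_{2m}$.

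The one place where your justification is wrong is the fractional matching step. You assert that ``the tripartite version of the fractional triangle matching exists by the same regularity reasoning'' as \cref{lem:fractional-matching}. That is not the case: \cref{lem:fractional-matching} rests on \cite[Lemma~4.2]{BGKLMO20}, whose proof uses weight-shifting gadgets that are themselves non-tripartite configurations, so the argument does not carry over verbatim to triangle-decompositions of dense subgraphs of $K_{n,n,n}$. The paper explicitly flags this and points instead to adapting Montgomery's work \cite{Mon17} (or to \cite[Lemma~8.11]{KSSS22b}) for a tripartite approximate fractional triangle-matching. The needed lemma is true, but the ``same regularity reasoning'' claim is a genuine gap: you would need to substitute a different proof for the tripartite fractional matching before the nibble step (\cref{lem:fractional-matching-sample,lem:nibble-cover}) can be invoked.
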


The proof strategy is similar, and we detail the necessary changes.
\begin{itemize}
	\item To prove an analogue of \cref{prop:IA-random-set}, the vortex $V(K_{N,N,N})=V_0\supseteq\cdots\supseteq V_\ell = X$ should be chosen so that each $V_k$ has the same number of vertices in each $V^j$.
	
	\item During the iteration, replace the various degree typicality assumptions (e.g.~\cref{C1,C2,C3,C4}) with the obvious tripartite analogues.
	
	\item In the final step of the cover-down procedure (\cref{sub:cover-down-2}), in the original setup we reduced to a bipartite matching problem by taking a random bipartition of $U_{i+1}$. In the Latin square setting this is not necessary since the bipartite structure is already induced by $K_{n,n,n}$.

	\item The existence of the regular triangle subset (\cref{lem:fractional-matching}) relies on weight-shifting gadgets which are not tripartite. It is possible to adapt work of Montgomery \cite{Mon17} to obtain a suitable approximate tripartite fractional matching result; see e.g.~\cite[Lemma~8.11]{KSSS22b}.
	
	\item The absorbing structures used in \cref{Alg6,Alg7} within the proof of \cref{thm:boosting} must be tripartite. However, this is not an obstruction since the vertices of $\mc{F}_{2m}$ can be split into three classes so that all hyperedges are tripartite.
\end{itemize}

\bibliographystyle{amsplain0.bst}
\bibliography{main.bib}

\end{document}